\DeclareRobustCommand{\shortto}{%
  \mathrel{\mathpalette\short@to\relax}%
}
\newcommand{\short@to}[2]{%
  \mkern2mu
  \clipbox{{.5\width} 0 0 0}{$\m@th#1\vphantom{+}{\shortrightarrow}$}%
  }
\DeclareRobustCommand{\SkipTocEntry}[4]{} 
\newcommand{\calO}{\mathcal{O}}
\newcommand{\calC}{\mathcal{C}}
\newcommand{\bbZ}{\mathbb{Z}}
\newcommand{\bbC}{\mathbb{C}}
\newcommand{\bbQ}{\mathbb{Q}}
\newcommand{\bbP}{\mathbb{P}}
\newcommand{\piet}{\pi_1^{\et}}
\newcommand{\pipet}{\pi_1^{\mathrm{pro\et}}}
\newcommand{\pisga}{\pi_1^{\mathrm{SGA3}}}
\newcommand{\Spec}{\mathrm{Spec}}
\newcommand{\Gal}{\mathrm{Gal}}
\newcommand{\Aut}{\mathrm{Aut}}
\newcommand{\GL}{\mathrm{GL}}
\newcommand{\id}{\mathrm{id}}
\newcommand{\et}{\mathrm{\acute{e}t}}
\newcommand{\Et}{\mathrm{\acute{E}t}}
\newcommand{\Image}{\mathrm{Im}}
\newcommand{\Ker}{\mathrm{Ker}}
\newcommand{\rmcolim}{\mathrm{colim}}
\newcommand{\bk}{\bar{k}}
\newcommand{\bs}{\bar{s}}
\newcommand{\bt}{\bar{t}}
\newcommand{\bx}{\bar{x}}
\newcommand{\bxi}{\bar{\xi}}
\newcommand{\shfF}{\mathscr{F}}
\newcommand{\rmSets}{\mathrm{Sets}}
\newcommand{\rmNoohi}{\mathrm{Noohi}}
\newcommand{\rmLoc}{\mathrm{Loc}}
\newcommand{\rmtop}{\mathrm{top}}
\newcommand{\rmim}{\mathrm{im}}
\newcommand{\rmsep}{\mathrm{sep}}
\newcommand{\rmCov}{\mathrm{Cov}}
\newcommand{\rmAut}{\mathrm{Aut}}
\newcommand{\rmFrac}{\mathrm{Frac}}
\newcommand{\rmShv}{\mathrm{Shv}}
\newcommand{\tX}{\widetilde{X}}
\newcommand{\bbH}{\overline{\overline{H}}}
\newcommand{\hbbZ}{\widehat{\mathbb{Z}}}
\newcommand{\tS}{\widetilde{S}}
\newcommand{\tT}{\widetilde{T}}
\newcommand{\tY}{\widetilde{Y}}
\newcommand{\tZ}{\widetilde{Z}}
\newcommand{\tF}{\widetilde{F}}
\newcommand{\tU}{\widetilde{U}}
\newcommand{\ts}{\tilde{s}}
\newcommand{\proet}{\mathrm{pro\et}}
\newcommand{\rarr}{\rightarrow}
\newcommand{\epirarr}{\twoheadrightarrow}
\newcommand{\monorarr}{\hookrightarrow}
\newcommand{\invlim}{\varprojlim}
\begin{document}
\bibliographystyle{alpha}
\newtheorem{theorem}{Theorem}[section]
\newtheorem{proposition}[theorem]{Proposition}
\newtheorem{lemma}[theorem]{Lemma}
\newtheorem{corollary}[theorem]{Corollary}
\newtheorem{claim}[theorem]{Claim}
\newtheorem{definition}[theorem]{Definition}
\newtheorem{conjecture}[theorem]{Conjecture}
\newtheorem{defn}[theorem]{Definition}
\newtheorem{prop}[theorem]{Proposition}
\newtheorem*{theorem*}{Theorem}
\newtheorem*{lemma*}{Lemma}
\newtheorem*{proposition*}{Proposition}

\theoremstyle{definition}
\newtheorem{question}[theorem]{Question}
\newtheorem{answer}[theorem]{Answer}
\newtheorem{remark}[theorem]{Remark}
\newtheorem{example}[theorem]{Example}
\newtheorem{warning}[theorem]{Warning}
\newtheorem{notation}[theorem]{Notation}
\newtheorem{construction}[theorem]{Construction}
\newtheorem{fact}[theorem]{Fact}
\newtheorem{obs}[theorem]{Observation}
\newtheorem{rmk}[theorem]{Remark}

\numberwithin{theorem}{section} 

\newcommand{\adjunction}[4]{\xymatrix@1{#1{\ } \ar@<-0.3ex>[r]_{ {\scriptstyle #2}} & {\ } #3 \ar@<-0.3ex>[l]_{ {\scriptstyle #4}}}}

\title{Homotopy Exact Sequence for the Pro-\'Etale Fundamental Group II}
\date{\today}
\author{Marcin Lara}

\address{Instytut Matematyczny PAN, Śniadeckich 8, Warsaw, Poland}
  \email{marcin.lara@impan.pl}

  \keywords{pro-\'etale topology, pro-\'etale fundamental group, \'etale fundamental group, homotopy exact sequence, Stein factorization}

\begin{abstract} The pro-\'etale fundamental group of a scheme, introduced by Bhatt and Scholze, generalizes the usual \'etale fundamental group $\piet$ defined in SGA1 and leads to an interesting class of "geometric coverings" of schemes, generalizing finite \'etale covers.
We prove exactness of the general homotopy sequence for the pro-\'etale fundamental group, i.e. that for a geometric point $\bs$ on $S$ and a flat proper morphism $X \rarr S$ of finite presentation whose geometric fibres are connected and reduced, the sequence
\begin{displaymath}
  \pipet(X_{\bs}) \rightarrow \pipet(X) \rightarrow \pipet(S) \rightarrow 1
\end{displaymath}
is "nearly exact".
This generalizes a theorem of Grothendieck from finite \'etale covers to geometric coverings. We achieve the proof by constructing an infinite (i.e. non-quasi-compact) analogue of the Stein factorization in this setting.
\end{abstract}

\maketitle

 \tableofcontents

\section{Introduction}
In \cite{BhattScholze}, the authors introduced the pro-\'etale topology for schemes. 
Along with the new topology, they defined a new fundamental group -- the pro-\'etale fundamental group. It is defined for a connected locally topologically noetherian scheme $X$ with a geometric point $\bx$ and is denoted $\pipet(X,\bx)$.

In Grothendieck's approach, one takes the category $\mathrm{F\Et}_X$ of finite \'etale covers together with the fibre functor $F_{\bx}$ and obtains an equivalence $\piet(X,\bx) - \mathrm{FSets} \simeq \mathrm{F\Et}_X$,  where $G - \mathrm{FSets}$ denotes \underline{finite} sets with a continuous $G$-action. The pro-\'etale fundamental group generalizes this; there is an equivalence between the category  $\pipet(X,\bx)-\rmSets$ of (possibly infinite) discrete sets with a continuous $\pipet(X,\bx)$-action and a larger class of coverings, namely "geometric coverings", which are defined to be schemes $Y$ over $X$ such that $Y \rarr X$:
\begin{enumerate}
    \item is \'etale (not necessarily quasi-compact!)
    \item satisfies the valuative criterion of properness.
\end{enumerate}
We denote the category of geometric coverings by $\rmCov_X$. An example of a non-finite covering in $\rmCov_X$ can be obtained by viewing an infinite chain of (suitably glued) $\bbP^1_k$'s as a covering of the nodal curve $X=\bbP^1/\{0,1\}$ obtained by gluing $0$ and $1$ on $\bbP^1_k$. The group $\pipet$ generalizes $\piet$ and the more general group $\pisga$ defined in \cite[Chapter X.6]{SGA3vol2}. The name "pro-\'etale" is justified by the fact that there is an equivalence $\rmCov_X \simeq \rmLoc_{X_{\proet}}$, where $\rmLoc_{X_{\proet}}$ denotes the category of locally constant sheaves of discrete sets in $X_{\proet}$.

\addtocontents{toc}{\SkipTocEntry}
\subsection*{The results}
In SGA1, Grothendieck proved the homotopy exact sequence for the \'etale fundamental group.
\begin{theorem*}(\cite[Exp. X, Cor. 1.4 + Cor. 1.8]{SGA1})
  Let $f:X \rarr S$ be a flat proper morphism of finite presentation whose geometric fibres are connected and reduced. Assume $S$ is connected and let $\bs$ be a geometric point of $S$. Let $\bx$ be a geometric point on $X_{\bs}$. Then the sequence 
  \begin{displaymath}
  \piet(X_{\bs},\bx) \rarr \piet(X,\bx) \rarr \piet(S,\bs) \rarr 1
  \end{displaymath}
  is exact.
\end{theorem*}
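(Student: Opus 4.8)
This theorem is classical and due to Grothendieck; since it is the blueprint for the pro-\'etale statement of this paper, let me sketch the line of argument. Recall the correspondence between closed subgroups of $\piet$ and pro-(finite \'etale) covers. Through it, exactness of the displayed sequence unwinds into three assertions about finite \'etale covers: \textbf{(a)} for every connected finite \'etale $T\rarr S$ the base change $X\times_S T$ is connected --- this is surjectivity of $\piet(X)\rarr\piet(S)$; \textbf{(b)} for every finite \'etale $T\rarr S$ the cover $X_{\bs}\times_S T\rarr X_{\bs}$ is trivial, i.e. isomorphic to a finite disjoint union of copies of $X_{\bs}$ --- this encodes triviality of the composite, and it is immediate since $X_{\bs}\rarr S$ factors through $\bs=\Spec\Omega$ with $\Omega$ separably closed, whence $T\times_S\bs$ is a disjoint union of copies of $\bs$; \textbf{(c)} every finite \'etale cover $g\colon Y\rarr X$ such that $Y\times_X X_{\bs}\rarr X_{\bs}$ is trivial is isomorphic to $X\times_S T$ for some finite \'etale $T\rarr S$. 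Granting (b), the conjunction of (a), (c) and the full faithfulness of $T\mapsto X\times_S T$ (established below) is equivalent to exactness of the sequence; indeed, read through the Galois correspondence, (c) says that the maximal pro-sub-cover of $X$ becoming trivial over $X_{\bs}$ coincides with the maximal one pulled back from $S$, i.e. $\overline{\Image(\piet(X_{\bs})\rarr\piet(X))}=\Ker(\piet(X)\rarr\piet(S))$.

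The engine behind (a) and (c) is the assertion that $f_*\calO_X=\calO_S$ and that this persists after arbitrary base change on $S$. Indeed, each fibre $X_s$ is proper, geometrically connected and geometrically reduced over $\kappa(s)$, so (passing to the algebraic closure and using flat base change) $H^0(X_s,\calO_{X_s})=\kappa(s)$; thus $s\mapsto h^0(X_s,\calO_{X_s})$ is constantly $1$, and by the theorem on cohomology and base change --- after the usual reduction to noetherian $S$, legitimate as $f$ is of finite presentation --- $f_*\calO_X$ is an invertible $\calO_S$-module whose formation commutes with base change, and the unit $\calO_S\rarr f_*\calO_X$, being an isomorphism on fibres, is an isomorphism. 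This yields (a): for $T\rarr S$ connected finite \'etale, $(f_T)_*\calO_{X_T}=\calO_T$, and since $X_T\rarr T$ is proper and surjective the idempotents of $\Gamma(X_T,\calO_{X_T})=\Gamma(T,\calO_T)$ witness that $X_T$ is connected. It also yields full faithfulness of $T\mapsto X\times_S T$: a morphism $X\times_S T_1\rarr X\times_S T_2$ over $X$, composed with the second projection, is a morphism $X\times_S T_1\rarr T_2$ over $S$; as $T_2\rarr S$ is affine and $(f_{T_1})_*\calO_{X\times_S T_1}=\calO_{T_1}$, it factors uniquely through the projection to $T_1$, giving the inverse bijection on Hom-sets.

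Now to (c). Treating connected components of $Y$ separately, assume $Y$ connected; let $d$ be the degree of $g$ and set $h:=f\circ g\colon Y\rarr S$, which is proper, flat and of finite presentation. Form the Stein factorization $Y\xrightarrow{\;\pi\;}T\xrightarrow{\;q\;}S$ with $T=\Spec_S(h_*\calO_Y)$, so $q$ is finite, $q_*\calO_T=h_*\calO_Y=f_*(g_*\calO_Y)$, and $\pi$ is proper with $\pi_*\calO_Y=\calO_T$. The sheaf $g_*\calO_Y$ is a locally free $\calO_X$-algebra of rank $d$; on the fibre over $t\in S$, passing to the algebraic closure gives $h^0\bigl(X_t,(g_*\calO_Y)|_{X_t}\bigr)=\dim_{\overline{\kappa(t)}}H^0(Y_{\bt},\calO_{Y_{\bt}})=\#\pi_0(Y_{\bt})\le d$, with equality exactly when $Y_{\bt}=Y\times_X X_{\bt}\rarr X_{\bt}$ is a trivial cover. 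By hypothesis equality holds at $\bs$; I claim it holds for every $t$. The locus where it holds is closed, by upper semicontinuity of $t\mapsto h^0(X_t,(g_*\calO_Y)|_{X_t})$ together with the bound $\le d$; it is open because it is precisely the image of the morphism $\mathrm{Res}_{X/S}(F)\rarr S$, where $F\rarr X$ is the finite \'etale frame bundle $\Isom_X(\underline{\{1,\dots,d\}},\mathcal{G})$ of the degree-$d$ locally constant sheaf $\mathcal{G}$ of sections of $g$ (a fibre $Y_{\bt}$ is trivial iff $\mathcal{G}|_{X_{\bt}}$ is constant iff $F$ has a section over $X_{\bt}$ iff $\mathrm{Res}_{X/S}(F)$ has a point over $\bt$), and this Weil restriction is \'etale over $S$ --- by a deformation argument, since $F\rarr X$ is formally \'etale and $X\rarr S$ is proper flat of finite presentation --- so its image is open. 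As $S$ is connected, equality holds everywhere. Consequently $q_*\calO_T=f_*(g_*\calO_Y)$ is finite locally free of rank $d$ with formation commuting with base change (cohomology and base change again), so $q\colon T\rarr S$ is finite locally free of rank $d$; the counit $f^*f_*(g_*\calO_Y)\rarr g_*\calO_Y$ --- a morphism of locally free $\calO_X$-modules of rank $d$ --- is an isomorphism since on every geometric fibre of $X\rarr S$ it is the evaluation map of the $d$-dimensional space of global sections of a trivial cover, whence $Y\cong X\times_S T$ over $X$; and finally $q$ is \'etale, since \'etaleness is fpqc-local on the base and after the faithfully flat base change $X\rarr S$ the morphism $q$ becomes $g\colon Y\rarr X$. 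This proves (c), and with it the theorem.

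The step I expect to be the genuine obstacle is the middle part of (c): showing that the Stein base $T=\Spec_S(h_*\calO_Y)$ is finite \'etale over $S$ and that $Y$ is its pullback along $f$. It is precisely here that properness, flatness, finite presentation and geometric reducedness of the fibres of $f$ must be used in concert --- via cohomology and base change, the semicontinuity theorem, the spreading-out of split finite \'etale covers over proper bases, and faithfully flat descent --- while everything else is a formal consequence of $f_*\calO_X=\calO_S$ holding universally.
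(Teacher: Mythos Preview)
This theorem is not proved in the paper; it is quoted from SGA1 as the classical prototype for the paper's main result, so there is no proof here to compare against directly. That said, your Stein-factorization strategy is the right one and matches SGA1, but the logical skeleton has a real gap.

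Your condition (c) --- that every finite \'etale $Y\rarr X$ with $Y_{\bs}$ \emph{fully trivial} descends to $S$ --- is strictly weaker than exactness in the middle. In group terms, ``$Y_{\bs}$ trivial'' means the image of $\piet(X_{\bs})$ lies in the normal core of the relevant stabilizer, so (c) only shows that every open \emph{normal} subgroup containing $\Image$ also contains $\Ker$; this yields $\Ker =$ (closed normal hull of $\Image$), not $\Ker = \Image$. An abstract counterexample: take $G=S_4\twoheadrightarrow H=S_3$ with kernel the Klein four-group $V_4$, and let $N\rarr G$ have image $\langle(12)(34)\rangle$; then your (a), (b), (c) and full faithfulness all hold, yet the sequence is not exact. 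Your sentence ``(c) says \dots\ $\overline{\Image}=\Ker$'' conflates the closure of the image (which equals the image, for profinite groups) with its closed normal hull.

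The correct criterion --- the one recorded in this paper as Proposition~\ref{dictionary}\ref{dictionary-kernel} and used in SGA1 --- replaces ``$Y_{\bs}$ trivial'' by ``$Y_{\bs}$ admits a section''. The fix is to form the Stein factorization $Y\rarr T\rarr S$ for an \emph{arbitrary} connected finite \'etale $Y$, prove that $T$ is finite \'etale over $S$ with $Y\rarr T$ having geometrically connected fibres \emph{without any hypothesis on $Y_{\bs}$} (this is \cite[Exp.~X, Prop.~1.2]{SGA1}, and is where the hypotheses on $f$ enter via cohomology and base change), and only at the end invoke the section: it singles out a connected component of $Y_{\bs}$ which is an entire geometric fibre of the finite \'etale map $Y\rarr T\times_S X$ between connected schemes, forcing that map to have degree one and hence to be an isomorphism. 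Your semicontinuity and Weil-restriction detour establishes only the weaker form of (c) and does not close the gap; it can be removed.
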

Our main result generalizes this theorem from $\piet$ to $\pipet$.

\begin{theorem*}(See Thm. \ref{homotopy-exact-general-base})
Let $f : X \rightarrow S$ be a flat proper morphism of finite presentation whose geometric fibres are connected and reduced. Assume that $S$ is Nagata and connected. Let $\bs$ be a geometric point of $S$ and let $\bx$ be a geometric point on $X_{\bs}$.  Then the sequence induced on the pro-\'etale fundamental groups
\begin{displaymath}
\pipet(X_{\bs},\bx) \rightarrow \pipet(X,\bx) \rightarrow \pipet(S,\bs) \rightarrow 1
\end{displaymath}
is nearly exact (see Defn. \ref{definition-weakly-exact}).
\end{theorem*}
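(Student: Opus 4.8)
The plan is to run Grothendieck's SGA1 argument with finite \'etale covers replaced by geometric coverings and the classical Stein factorization replaced by its non-quasi-compact analogue, and then to read off the statement about topological groups from the corresponding statements about the categories $\rmCov_{X_{\bs}},\rmCov_X,\rmCov_S$ via the equivalences $\rmCov_{(-)}\simeq \pipet(-)\text{-}\rmSets$. Write $u\colon G:=\pipet(X,\bx)\to H:=\pipet(S,\bs)$ and $v\colon F:=\pipet(X_{\bs},\bx)\to G$ for the induced continuous homomorphisms. (Here $X$ is connected: $f$ is surjective since $S$ is connected and $X$ nonempty, open since $f$ is flat and finitely presented, closed since $f$ is proper, and its geometric fibres are connected.) Unwinding Definition \ref{definition-weakly-exact}, near-exactness amounts to three assertions: (ii) $u\circ v$ is trivial; (i) $u$ is surjective; (iii) $\ker u=\overline{v(F)}$ (the passage to closure being exactly one of the tolerances built into "nearly exact"). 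Part (ii) is immediate, since $X_{\bs}\to X\xrightarrow{f}S$ factors through the geometric point $\bs$, and $\pipet$ of (the spectrum of a separably closed field, hence of) a geometric point is trivial.

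For (i) I would check that the pullback functor $f^{\ast}\colon\rmCov_S\to\rmCov_X$, $T\mapsto X\times_S T$, carries connected objects to connected objects. For connected $T\in\rmCov_S$ the morphism $X\times_S T\to T$ is again flat, proper, finitely presented with connected reduced geometric fibres; such a morphism has $f_{\ast}\calO=\calO$ with formation commuting with base change (cohomology and base change in degree $0$, using geometric connectedness and reducedness of the fibres), hence its Stein factorization is an isomorphism, so connectedness of $T$ forces connectedness of $X\times_S T$. Through the dictionary, "$f^{\ast}$ is fully faithful and preserves connectedness" is precisely the surjectivity of $u$ required by Definition \ref{definition-weakly-exact}. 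The full faithfulness of $f^{\ast}$ on geometric coverings — a relative rigidity statement, again rooted in $f_{\ast}\calO_X=\calO_S$ universally — is the one auxiliary lemma I would isolate; it also makes $u$ well defined and reappears in (iii).

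The heart is (iii). By (ii) we have $\overline{v(F)}\subseteq\ker u$, so it remains to show $\ker u\subseteq\overline{v(F)}$, which after translating through the equivalences reduces to the purely geometric assertion
\[
  Y\in\rmCov_X\ \text{with}\ Y\times_X X_{\bs}\ \text{a trivial (completely split) covering of}\ X_{\bs}\ \Longrightarrow\ Y\cong X\times_S T,\ \text{some}\ T\in\rmCov_S ;
\]
indeed "$\overline{v(F)}$ acts trivially on $F_{\bx}(Y)$" says exactly that $Y\times_X X_{\bs}$ is trivial, while "$\ker u$ acts trivially" identifies, via the fully faithful $f^{\ast}$ together with part (i), the $G$-sets in the essential image of $f^{\ast}$. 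This is where the non-quasi-compact Stein factorization enters: applied to $Y\to X\xrightarrow{f}S$ — a morphism satisfying the valuative criterion of properness but \emph{not} quasi-compact — it produces a factorization $Y\xrightarrow{h}S'\xrightarrow{p}S$ with $p\in\rmCov_S$, with $h$ having "$\calO$-connected fibres", and with formation compatible with base change along $\bs\to S$. The canonical $X$-morphism $\varphi\colon Y\to X\times_S S'$ induced by $Y\to X$ and $h$ is then a morphism of $\rmCov_X$; restricted to $X_{\bs}$ it becomes the identity of $X_{\bs}\times\pi_0(Y_{\bs}/\bs)$ (base-change compatibility identifies $S'_{\bs}$ with the relative $\pi_0$ of $Y_{\bs}$, and $X_{\bs}$ is connected). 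Since $X$ is connected and a morphism of $\rmCov_X$ is an isomorphism as soon as it is one on the fibre over $\bx\in X_{\bs}$, $\varphi$ is an isomorphism, so $T:=S'$ works; combined with the (easy) converse and the standard bookkeeping for Noohi groups, this yields $\ker u=\overline{v(F)}$ in the sense of Definition \ref{definition-weakly-exact}.

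The main obstacle is thus everything encapsulated in the non-quasi-compact Stein factorization: that the passage $Y\mapsto(S'\to S)$ lands in $\rmCov_S$ (\'etaleness of $S'\to S$ uses reducedness of the fibres of $f$; the valuative criterion for $S'\to S$ uses properness of $f$, to guarantee that no component of the relative $\pi_0$ of $Y/S$ is lost), that it is functorial, and that its formation commutes with base change on $S$, in particular along $\bs\to S$ — this last point being exactly what couples the construction to the triviality hypothesis on $Y_{\bs}$. This is the technical core of the paper (announced in the abstract); I expect its proof to proceed by a local-on-$S$ construction followed by descent, possibly after replacing $S$ by a more convenient base, with the Nagata hypothesis entering to keep the relevant normalizations finite so that the construction stays inside $\rmCov$. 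Granting it, the rest is the routine translation between the groups $\pipet$ and their categories of discrete sets.
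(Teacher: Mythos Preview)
Your overall strategy coincides with the paper's: prove the right-hand part by showing that $f^*$ preserves connected objects (via $f_*\calO_X=\calO_S$ and flat base change), and prove the middle part by invoking the infinite Stein factorization (Thm.~\ref{steinforgeomcov}) and then translating back to groups. The geometric core is correctly identified. However, your dictionary step contains a genuine gap. First, you misread Definition~\ref{definition-weakly-exact}: near-exactness on the right means \emph{dense image}, not surjectivity (your argument only yields dense image, which is fine), and near-exactness in the middle means $\overline{\overline{\rmim(v)}}=\ker(u)$ with the \emph{thick closure} (intersection of open subgroups containing the image), not the topological closure $\overline{v(F)}$ you use.

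More seriously, your translation of the middle exactness to the geometric criterion ``$Y_{\bs}$ completely split $\Rightarrow$ $Y$ pulled back'' is too weak. The correct criterion, supplied by Prop.~\ref{dictionary}\ref{dictionary-kernel}, is ``$Y$ connected and $Y_{\bs}$ \emph{has a section} $\Rightarrow$ $Y$ pulled back''. Your condition on $Y$ is that $v(F)$ lies in the normal core of the corresponding open subgroup $U\subset G$, whereas one must treat all open $U$ containing $v(F)$; since open subgroups of $\pipet(X)$ need not contain any open normal subgroup (exactly the obstruction flagged in the introduction), your criterion does not give $\ker(u)\subset\overline{\overline{v(F)}}$. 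Relatedly, the identification ``$\ker u$ acts trivially $\Leftrightarrow$ $Y$ in the essential image of $f^*$'' uses the surjectivity of $u$ that you have not established; without the extra clause $(G/\ker u)^{\rmNoohi}\simeq H$ this fails. The repair is short: apply the Stein factorization to a connected $Y$ with a section $\sigma:X_{\bs}\to Y_{\bs}$, observe that $\sigma(X_{\bs})$ is one geometric fibre of $Y\to T_X$ mapping isomorphically, hence the morphism of connected objects $Y\to T_X$ in $\rmCov_X$ has a singleton fibre and is an isomorphism --- exactly the paper's argument --- and then invoke Prop.~\ref{dictionary}\ref{dictionary-kernel} rather than your ad hoc translation.
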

The near-exactness means that one needs to take certain kinds of closures (of the images to make them equal the kernels). For exactness in the middle, this cannot be avoided; taking for example $X = \{ZY^2 = W^3 + ZW^2 + Z^3t\} \subset \bbP_R^2$, where $R = \bbC[[t]]$, one sees that the special fiber is the nodal curve described earlier and $\pipet(X_{\bs}) = \bbZ$. On the other hand, the scheme $X$ is normal, which implies that $\pipet(X) = \piet(X) = \hbbZ$. Thus, the sequence reads $\bbZ \rarr \hbbZ \rarr 1 \rarr 1$ (see Remark \ref{remark-that-only-nearly-exact}).

The need for the notion of near-exactness stems from the fact that the group $\pipet$ has a more complicated topology than $\piet$; the groups $\pipet$ belong to the class of Noohi groups. For example, they are not necessarily compact in general. However, near-exactness of the homotopy sequence translates to a statement about the geometric coverings (of $X_{\bs}$, $X$ and $S$), which is a generalization of the theorem of Grothendieck, where one replaces $\mathrm{F\Et}$ by $\rmCov$. In other words, despite the sequence being only nearly exact, in terms of coverings we prove "everything that was to be proven".

Besides $\pipet$ having a more complicated topology, the main difficulties in trying to directly generalize the proof of Grothendieck are as follows:
\begin{itemize}
  \item geometric coverings of schemes (i.e. elements of $\rmCov_X$ defined above) are often not quasi-compact, unlike elements of $\mathrm{F\Et}_X$. Some useful constructions that worked for finite \'etale covers (like the Stein factorization) will fail, unless considerably generalized.
  \item for a connected geometric covering $Y \in \rmCov_X$, there is in general no Galois geometric covering dominating it. Equivalently, there might exist an open subgroup $U < \pipet(X)$ that does not contain an open normal subgroup. This prevents some proofs that would work for $\pisga$ from carrying over to $\pipet$.
\end{itemize}
Indeed, the (near-)exactness in the middle of the homotopy exact sequence boils down to the following statement. For a connected $Y \in \rmCov_X$ such that the structure morphism $Y\times_X X_{\bs} \rarr X_{\bs}$ has a section, there exists $T \in \rmCov_S$ such that $Y \simeq T \times_S X$. For $Y$ finite \'etale, one takes $T$ to be the Stein factorization of $h:Y \rarr S$, i.e. $Y = \underline{\Spec}_S(h_*\calO_{Y})$ and checks that it is (finite) \'etale and has the desired properties. For non-quasi-compact $Y$, this definition usually does not give the correct answer. When trying to directly write $Y$ as a union (or a gluing) of quasi-compact subschemes (e.g. open or closed, or unions of irreducible components) and applying the Stein factorization to each, one quickly runs into problems. We construct the desired $T$ in a different way and call it an "infinite Stein factorization". Our method is as follows; by the equivalence $\rmCov_X \simeq \rmLoc_{X_{\proet}}$, such a $T$ should split completely after a base-change to some large pro-\'etale cover  $\tS$ of $S$. Thus, after base-changing to $\tS$, there is an obvious candidate for $\tT = T \times_S \tS$; namely, $\tT = \sqcup_{i \in I} \tS_i$, i.e. a disjoint union of copies of $\tS$ parametrized by some indexing set $I$. More precisely, $I = \pi_0(Y_{\bs})$. We show that for $\tT$ defined in such a way, there is a map $\tY \rarr \tT$ with the desired properties, and moreover, we have a descent datum with respect to the pro-\'etale cover $\tS \rarr S$. We do this by first showing the theorem for $S$ equal to the spectrum of a strictly henselian ring and then generalizing it to $\tS$ (whose connected components are the spectra of strictly henselian local rings, but $\pi_0(\tS)$ might have a complicated profinite topology).

The precise statement of the existence of the infinite Stein factorization is as follows.
\begin{theorem*}[Thm. \ref{steinforgeomcov}] Let $S$ be a Nagata scheme. Let $X \rarr S$ be as in the theorem above and let $Y \in \rmCov_X$ be connected. Then there exists a connected $T \in \rmCov_S$ and a morphism $g:Y \rarr T$ over $X \rarr S$ such that $g$ has geometrically connected fibres. 

  Moreover, for any two $T_1$, $T_2$ and maps $g_i: Y \rarr T_i$, $i=1,2$, as in the statement, there exists a unique isomorphism $\phi:T_1 \simeq T_2$ in $\rmCov_S$ such that $g_2 = \phi \circ g_1$. 
  \end{theorem*}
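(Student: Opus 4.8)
The plan is to reduce everything to the case of a strictly henselian local base and then descend; I deal with existence first, then uniqueness. A few reductions up front: since $X\to S$ is proper, flat and of finite presentation with nonempty (connected, reduced) geometric fibres it is surjective, so $Y\to X\to S$ is surjective and likewise any factorization $Y\to T\to S$ with $T\in\rmCov_S$ has $Y\to T$ surjective; I may assume $S$ connected; and I will use repeatedly that base change along $S'\to S$ carries $\rmCov_X$ into $\rmCov_{X_{S'}}$, that connected components of objects of $\rmCov$ are again objects of $\rmCov$, and that $\rmCov_{(-)}\simeq\rmLoc_{(-)_{\proet}}$ is a stack for the pro-\'etale topology. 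The base case is $S=\Spec R$ with $R$ strictly henselian local (the Nagata hypothesis enters to control the local rings and normalizations that arise): since $\rmCov_S$ then has no nontrivial connected object, necessarily $T=S$, and the content is that the structure morphism $g\colon Y\to S$ has geometrically connected fibres, i.e.\ $Y\times_S\bar s'$ is connected for every geometric point $\bar s'$ of $S$. Over the closed point $\bs$ this says that the pullback functor $\rmCov_X\to\rmCov_{X_{\bs}}$ preserves connectedness, which I would deduce from the fact that it is an equivalence over a strictly henselian base -- the ``proper invariance'' of geometric coverings -- itself reduced to classical proper base change for finite \'etale covers by a limit argument over the quasi-compact \'etale $X$-subschemes of $Y$. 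For a general $\bar s'$ one propagates connectedness of the geometric fibres of $Y$ along specializations inside the local scheme $S$, using properness of $X\to S$ and the classical finite \'etale statement.

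For general connected $S$, fix a w-contractible pro-\'etale cover $\tS\to S$ (it exists by Bhatt--Scholze): $\pi_0(\tS)$ is an extremally disconnected profinite space, each connected component of $\tS$ is the spectrum of a strictly henselian local ring, and $\rmCov_{\tS}$ is trivial. Put $\tX=X\times_S\tS$ and $\tY=Y\times_S\tS$. Over each connected component $\tS_c$ of $\tS$, the base case applies to every connected component of $\tY\times_{\tS}\tS_c$, viewed as a connected object of $\rmCov_{X\times_S\tS_c}$: each such component has geometrically connected fibres over $\tS_c$, i.e.\ is its own infinite Stein factorization. Gluing these over $c\in\pi_0(\tS)$ should produce a morphism $\tg\colon\tY\to\tT$ over $\tX\to\tS$ with geometrically connected fibres, where $\tT\in\rmCov_{\tS}$; triviality of $\rmCov_{\tS}$ then forces $\tT$ to be a constant geometric covering $\coprod_{i\in I}\tS$, with $I$ canonically independent of $c$ and, after a choice of base points, identified with $\pi_0$ of a fibre of $Y\to X$ over a geometric point above $\bs$. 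Canonicity of the construction (the uniqueness proved below, applied over the members of the pro-\'etale cover) equips $(\tT,\tg)$ with a descent datum relative to $\tS\to S$, effective because $\rmCov_{(-)}\simeq\rmLoc_{(-)_{\proet}}$ is a stack; descent then produces $g\colon Y\to T$ over $X\to S$ with $T\in\rmCov_S$. Finally $T$ is connected (as $Y$ is and $g$ is surjective), and $g$ has geometrically connected fibres since this may be checked after the faithfully flat base change $\tS\to S$, where it holds by construction; the fibre of $T$ over $\bs$ recovers the expected continuous $\pipet(S,\bs)$-set.

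Uniqueness is the usual Stein-factorization argument: given $g_i\colon Y\to T_i$ for $i=1,2$, consider $T_1\times_S T_2\in\rmCov_S$ -- equivalently pass to the associated $\pipet(S,\bs)$-sets -- and let $T_{12}$ be the connected component through which $(g_1,g_2)\colon Y\to T_1\times_S T_2$ factors. Each projection $T_{12}\to T_i$ lies in $\rmCov_S$ and is surjective (since $Y\to T_i$ is), and, $g_i$ having geometrically connected fibres while $T_i$ is connected, it induces a bijection on fibres and hence is an isomorphism in $\rmCov_S$; composing gives $\phi\colon T_1\xrightarrow{\sim}T_2$ with $g_2=\phi\circ g_1$, and $\phi$ is the unique such morphism because $Y\to T_1$ is an epimorphism in $\rmCov_S$.

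The crux is the gluing in the second paragraph: assembling the componentwise infinite Stein factorizations supplied by the base case into a single $\tg\colon\tY\to\tT$ continuously over the profinite parameter $\pi_0(\tS)$ -- so that the connected components of $\tY$ are organized globally, not merely fibrewise, into copies of $\tS$ -- and then checking that the descended object is a bona fide geometric covering, i.e.\ \'etale (but in general not quasi-compact) and satisfying the valuative criterion, rather than merely a pro-\'etale sheaf. The base case, though it too must contend with the non-quasi-compactness of $Y$, is comparatively soft once proper invariance of $\rmCov$ is available.
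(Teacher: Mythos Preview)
Your overall architecture matches the paper's: settle the strictly henselian local case, then pull back to a w-local pro-\'etale cover $\tS\to S$, build $\tT=\coprod_I\tS$ there, and descend using uniqueness to produce the cocycle. Your uniqueness argument via the connected component $T_{12}\subset T_1\times_S T_2$ is different from the paper's (which works directly with $\pipet(S,\bs)$-sets and equivariance) but is fine.

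There are, however, two genuine gaps. First, your justification of the base case is wrong: over a strictly henselian noetherian $S$, the pullback $\rmCov_X\to\rmCov_{X_{\bs}}$ is \emph{not} an equivalence. The very example in the paper (Remark~\ref{remark-that-only-nearly-exact}: a normal degeneration of an elliptic curve to a nodal curve over $\bbC[[t]]$) gives $\pipet(X_{\bs})=\bbZ$ while $\pipet(X)=\hbbZ$, so the functor is fully faithful but not essentially surjective; there is no ``proper invariance of geometric coverings'' to invoke. What \emph{is} true, and all you need, is that connected $Y$ stays connected over every geometric fibre. The paper proves this (Prop.~\ref{forclosedfibre}, Lm.~\ref{strhens-generic}, Cor.~\ref{generalstrhenscase}) not by a limit over quasi-compact \'etale opens, but by exhausting $Y$ by connected \emph{closed} unions of irreducible components (Lm.~\ref{wrtngassum}), each of which is then genuinely proper over $S$ so that $\pi_0$-proper-base-change applies.

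Second, and more seriously, you correctly flag the gluing over the profinite $\pi_0(\tS)$ as the crux but offer no mechanism. Knowing that over each connected component $c$ of $\tS$ the scheme $\tY_c$ decomposes into pieces indexed by $I$ does not assemble into a global clopen decomposition of $\tY$: the space $\tY$ is not locally noetherian, its connected components need not be open, and the fibrewise indexing by $I$ must be shown to vary \emph{continuously} in $c$. The paper's solution is not abstract nonsense with $\rmCov_{\tS}$ (which is not even well-defined, as $\tS$ is not locally topologically noetherian) but is quite concrete: first use the Nagata hypothesis to reduce to $S$ \emph{normal} via effective descent along the finite normalization $S^\nu\to S$; then, with $S$ normal and $\eta$ its generic point, prove that $\pi_0(\tY_\eta)\to\pi_0(\tY)$ is a homeomorphism, and analyze $\pi_0(\tY_\eta)$ by descending to a finite separable extension of $\kappa(\eta)$ over which the components of $Y_\eta$ become geometrically connected (Lm.~\ref{finitegeomconncomp}). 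This is where the Nagata hypothesis is actually spent and where the continuity over $\pi_0(\tS)$ is won; your sketch misses both the reduction to normal $S$ and the role of the generic point.
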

  
This article continues the study started in the first part (\cite{PartI}), where we dealt with the K\"unneth formula, general van Kampen theorem and the comparison between "geometric" and "arithmetic" fundamental groups in the case of $\pipet$.

\addtocontents{toc}{\SkipTocEntry}
\subsection*{Acknowledgements}
The results contained in this article are a part of my PhD thesis. I express my gratitude to my advisor H\'el\`ene Esnault for introducing me to the topic and her constant encouragement. I would like to thank my co-advisor Vasudevan Srinivas for his support and suggestions. I am thankful to Peter Scholze for explaining some parts of his work to me via e-mail. I thank Jo\~ao Pedro dos Santos for for his comments and feedback. I owe special thanks to  Fabio Tonini, Lei Zhang and Marco D'Addezio from our group in Berlin for many inspiring mathematical  discussions. I thank Piotr Achinger for his support. During my PhD, I was funded by the Einstein Foundation. 

This  work  is a  part  of  the  project KAPIBARA supported by the funding from the European Research Council (ERC) under the European Union’s Horizon 2020 research and innovation programme (grant agreement No 802787).

\addtocontents{toc}{\SkipTocEntry}
\subsection{Conventions and notations}
\begin{itemize}
    \item For a field $k$, we will use $\bk$ to denote its (fixed) algebraic closure and  $k^{\rmsep}$ or $k^s$ to denote its separable closure (in $\bk$).
    \item The topological groups are assumed to be Hausdorff unless specified otherwise or appearing in a context where it is not automatically satisfied (e.g. as a quotient by a subgroup that is not necessarily closed). We will usually comment whenever a non-Hausdorff group appears.
    \item We assume (almost) every base scheme to be locally topologically noetherian. This does not cause problems when considering geometric coverings, as a geometric covering of a locally topologically noetherian scheme is locally topologically noetherian again - this is \cite[Lm. 6.6.10]{BhattScholze}. Without this assumption, the category of geometric coverings does not behave in the desired way: see \cite[Example 7.3.12]{BhattScholze}. On the other hand, in some proofs we base-change to large pro-\'etale covers (e.g. so-called w-contractible covers) which are usually non-noetherian and some care is needed.
    \item A "$G$-set" for a topological group $G$ will mean a discrete set with a continuous action of $G$ unless specified otherwise. We will denote the category of $G$-sets by $G-\rmSets$.
    \item We will denote the category of (discrete) sets by $\rmSets$.
    \item We will often omit the base points from the statements and the discussion; by \cite[Cor. 3.21]{PartI}, this usually does not change much.
\end{itemize}

\section{Overview of the results in \cite{BhattScholze} and Part I}
We will use the language and results of \cite{BhattScholze}, especially of Chapter 7, as this is where the pro-\'etale fundamental group was defined. We are going to give a quick overview of some of these results below, but we recommend keeping a copy of \cite{BhattScholze} at hand.

\begin{defn}(\cite[Defn. 7.1.1]{BhattScholze})
A Hausdorff topological group $G$ is a \emph{Noohi group} if the natural map induces an isomorphism $G \rarr \rmAut(F_G)$ of topological groups. Here, $F_G : G-\rmSets \rarr \rmSets$ is the forgetful functor. For a discrete set $S$, we endow $\rmAut(S)$ with the compact-open topology and $\rmAut(F_G)$ is topologized using $\rmAut(S)$ for $S \in \rmSets$.
\end{defn}

By \cite[Prop. 7.1.5]{BhattScholze}, a topological group is Noohi if and only if it satisfies the following conditions:
\begin{itemize}
    \item its open subgroups form a basis of open neighbourhoods of $1 \in G$,
    \item it is Ra{\u \i}kov complete.
\end{itemize}
A topological group $G$ is Ra{\u \i}kov complete if it is complete for its two-sided uniformity (see \cite{Dikranjan} or \cite[Chapter 3.6]{AT} for an introduction to the Ra{\u \i}kov completion).
\begin{example}
The following classes of topological groups are Noohi: discrete groups, profinite groups, $\rmAut(S)$ with the compact-open topology for $S$ a discrete set (see \cite[Lm. 7.1.4]{BhattScholze}), groups containing an open subgroup which is Noohi (see \cite[Lm. 7.1.8]{BhattScholze}).

The following groups are Noohi: $\bbQ_\ell$, $\overline{\bbQ_\ell}$ for the colimit topology induced by expressing $\overline{\bbQ_\ell}$ as a union of finite extensions (in
contrast with the situation for the $\ell$-adic topology),  $\GL_n(\bbQ_\ell)$ for the colimit topology (see \cite[Example 7.1.7]{BhattScholze}).
\end{example}

The notion of a Noohi group is tightly connected to a notion of an infinite Galois category, defined in \cite[Defn. 7.2.1]{BhattScholze}. An \emph{infinite Galois category} is a pair $(\calC,F : \calC \rarr \rmSets)$ satisfying certain categorical properties, where $\calC$ is a category. With an additional assumption of \emph{tameness}, it generalizes the notion of a Galois category, introduced by Grothendieck to define $\piet$. The basic example is as follows: if $G$ is a topological group, then $(G-\rmSets,F_G)$ is a tame infinite Galois category. It turns out, that any tame infinite Galois category is of this form. For a pair $(\calC,F)$, one defines $\pi_1(\calC,F) = \Aut(F)$. It is topologized using $\Aut(S)$ with compact-open topology, for $S \in \rmSets$. By \cite[Thm. 7.2.5]{BhattScholze}, if $(\calC,F)$ is an infinite Galois category, then $\pi_1(\calC,F)$ is Noohi and if $(\calC,F)$ is moreover tame, then $F$ induces an equivalence
\begin{displaymath}
  \calC \simeq \pi_1(\calC,F)-\rmSets.
\end{displaymath}

The above formalism is a corrected version of the theory in \cite{Noohi}. It was also studied in \cite[Chapter 4]{Lepage} under different names.

\addtocontents{toc}{\SkipTocEntry}
\subsection*{Pro-\'etale topology and the definition of $\pipet(X)$}
Fix a locally topologically noetherian scheme $X$. 
\begin{defn}
Let $Y \rarr X$ be a morphism of schemes such that:
\begin{enumerate}
    \item it is \'etale (not necessarily quasi-compact!)
    \item it satisfies the valuative criterion of properness.
\end{enumerate}
We will call $Y$ a \emph{geometric covering} of $X$. We will denote the category of geometric coverings by $\rmCov_X$.
\end{defn}
As $Y$ is not assumed to be of finite type over $X$, the valuative criterion does not imply that $Y \rarr X$ is proper (otherwise we would simply get a finite \'etale morphism). For example, for an algebraically closed field $\bk$, the category $\rmCov_{\Spec(\bk)}$ consists of (possibly infinite) disjoint unions of $\Spec(\bk)$ and we have $\rmCov_{\Spec(\bk)} \simeq \rmSets$. More generally, one has:
\begin{lemma}(\cite[Lm. 7.3.8]{BhattScholze}) If $X$ is a henselian local scheme, then any $Y \in \rmCov_X$ is a disjoint union of finite \'etale $X$-schemes.
\end{lemma}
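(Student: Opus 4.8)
The plan is to prove this directly and geometrically, by showing that every connected component of $Y$ is open in $Y$ and finite \'etale over $X$; the statement follows at once. Write $X=\Spec(A)$ with $A$ henselian local and let $x\in X$ be the closed point. Throughout I use that a geometric covering is separated (this is part of, or an immediate consequence of, the valuative criterion of properness). The first step is the observation that \emph{every point of $Y$ specializes to a point lying over $x$}: given $y\in Y$ mapping to $s\in X$, the point $s$ specializes to $x$ because $A$ is local, so one may choose a valuation ring $V\subseteq\kappa(y)$ with fraction field $\kappa(y)$, dominating the image of $A$ in $\kappa(y)$ and with centre $\frakm_A$. This produces a commutative square with $\Spec\kappa(y)\rarr Y$ the point $y$ and $\Spec V\rarr X$ carrying the closed point to $x$; the valuative criterion of properness then yields a lift $\Spec V\rarr Y$ whose special point is a point $y'\in Y$ over $x$ with $y'\in\overline{\{y\}}$. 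In particular, every connected component of $Y$ meets the fibre $Y_x$.

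The second step is the local structure at a point $y\in Y$ over $x$. Choose an affine open $y\in U\subseteq Y$; then $U\rarr X$ is \'etale, separated and of finite type, hence quasi-finite and separated, so the standard decomposition of quasi-finite separated schemes over a henselian local ring gives $U=U^{\mathrm f}\sqcup U^{\mathrm a}$ with $U^{\mathrm f}\rarr X$ finite (hence finite \'etale) and $U^{\mathrm a}$ having empty fibre over $x$; thus $y\in U^{\mathrm f}$. Since $A$ is henselian local, $U^{\mathrm f}$ is a finite disjoint union of spectra of local finite \'etale $A$-algebras; let $W$ be the summand containing $y$. Then $W$ is connected and clopen in $U^{\mathrm f}$, hence clopen in $U$ and therefore open in $Y$; moreover $W\rarr X$ is finite, so proper, while $Y\rarr X$ is separated, so $W\monorarr Y$ is a proper open immersion, i.e.\ $W$ is also closed in $Y$. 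Hence $W$ is a nonempty connected clopen subset of $Y$, i.e.\ a connected component of $Y$, and it is finite \'etale over $X$.

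Combining the two steps: by the first, every connected component of $Y$ contains a point over $x$, and by the second the component of such a point is an open subscheme that is finite \'etale over $X$; so all connected components of $Y$ are open (hence clopen) and finite \'etale over $X$, and $Y$ is their disjoint union. The step I expect to be the crux is the second one, and within it the passage from the local henselian decomposition on an affine chart to a subscheme that is clopen in the whole, possibly non-quasi-compact, $Y$: this is exactly where separatedness of the geometric covering (to make $W$ closed) and the valuative criterion (to ensure, via the first step, that no component is invisible from the closed fibre) are both indispensable.
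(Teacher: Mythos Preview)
Your proof is correct. Note, however, that the paper does not supply its own proof of this lemma: it is quoted verbatim as \cite[Lm.~7.3.8]{BhattScholze} and used as background input, so there is no in-paper argument to compare against. Your argument is essentially the standard one (and matches the proof in \cite{BhattScholze}): use the valuative criterion to force every point of $Y$ to specialize into the closed fibre, then use the henselian decomposition of a quasi-finite separated scheme over $A$ on an affine chart to produce a connected finite \'etale piece $W$, and finally use the cancellation property (finite over $X$ plus $Y\rarr X$ separated implies $W\monorarr Y$ proper) to see that $W$ is clopen in $Y$. The only point worth flagging is your parenthetical that separatedness of $Y\rarr X$ ``is part of, or an immediate consequence of, the valuative criterion of properness'': this is true in the setting of \cite{BhattScholze} because $X$ is locally topologically noetherian (so $Y$ is as well, and the diagonal, being an open immersion by unramifiedness, is then automatically quasi-compact), but it is not a tautology and deserves the short justification.
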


A basic example of a non-finite connected covering in $\rmCov_X$ can be obtained by viewing an infinite chain of (suitably glued) $\bbP^1_k$'s as a covering of the nodal curve $X=\bbP^1/\{0,1\}$ obtained by gluing $0$ and $1$ on $\bbP^1_k$.

Let us choose a geometric point $\bx: \Spec(\bk) \rarr X$ on $X$. This gives a fibre functor $F_{\bx}: \rmCov_X \rarr \rmSets$.
By \cite[Lemma 7.4.1]{BhattScholze}), the pair $(\rmCov_X,F_{\bx})$ is a tame infinite Galois category. Then one defines
\begin{defn}
The \emph{pro-\'etale fundamental group} is defined as
\begin{displaymath}
\pipet(X,\bx) = \pi_1(\rmCov_X,F_{\bx}).
\end{displaymath}
In other words, $\pipet(X,\bx)=\rmAut(F_x)$ and this group is topologized using the compact-open topology on $\rmAut(S)$ for any $S \in \rmSets$.
\end{defn}

One can compare the groups $\pipet(X,\bx)$, $\piet(X,\bx)$ and  $\pi_1^{\mathrm{SGA3}}(X,\bx)$, where the last group is the group introduced in Chapter X.6 of \cite{SGA3vol2}.
\begin{lemma}For a scheme $X$, the following relations between the fundamental groups hold
\begin{enumerate}
    \item The group $\piet(X,\bx)$ is the profinite completion of $\pipet(X,\bx)$.
    \item The group $\pi_1^{\mathrm{SGA3}}(X,\bx)$ is the prodiscrete completion of $\pipet(X,\bx)$.
\end{enumerate}
\end{lemma}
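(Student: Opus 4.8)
The plan is to deduce both parts from the equivalence $\rmCov_X \simeq \pipet(X,\bx)-\rmSets$ of \cite[Thm.~7.2.5, Lm.~7.4.1]{BhattScholze}, by identifying on the scheme side the full subcategories of $\rmCov_X$ that $\piet(X,\bx)$ and $\pisga(X,\bx)$ classify, and on the group side the full subcategories of $\pipet(X,\bx)-\rmSets$ cut out by the two completions. Recall that for a topological group $G$ whose open subgroups form a neighbourhood basis of $1$, the profinite completion $\widehat{G}$ is $\invlim G/N$ over the open normal subgroups of finite index, while the prodiscrete completion $G^{\vee}$ is $\invlim_{N\triangleleft_o G} G/N$ over \emph{all} open normal subgroups; both are Noohi, and each is recovered as the automorphism group of the forgetful functor on its category of continuous sets. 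So it suffices, for each of $H=\piet(X,\bx)$ and $H=\pisga(X,\bx)$, to produce a full subcategory $\calC_H\subseteq\rmCov_X$ with $H-\rmSets\simeq\calC_H$ compatibly with the fibre functor $F_{\bx}$, and to match $\calC_H$, under $\rmCov_X\simeq G-\rmSets$ with $G:=\pipet(X,\bx)$, with the essential image of restriction of scalars along $G\to\widehat{G}$, resp. $G\to G^{\vee}$.

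\textbf{Part (1).} Take $\calC_{\piet(X,\bx)}=\mathrm{F\Et}_X$, a full subcategory of $\rmCov_X$; here $H-\rmSets$ is $\piet(X,\bx)-\mathrm{FSets}\simeq\mathrm{F\Et}_X$, which is Grothendieck's theorem \cite{SGA1}. An object of $\rmCov_X$ is finite étale precisely when it is quasi-compact, equivalently (using that $X$ is connected) when its fibre at $\bx$ is finite, so $\mathrm{F\Et}_X$ corresponds under $\rmCov_X\simeq G-\rmSets$ to the finite $G$-sets. Finally, a continuous action of $G$ on a finite set $S$ kills the open normal subgroup $N:=\bigcap_{s\in S}\rmStab(s)$, which has finite index, so it factors through $G/N$ and hence through $\widehat{G}$; this yields an equivalence $\widehat{G}-\mathrm{FSets}\simeq G-\mathrm{FSets}$ over $\mathrm{FSets}$ (full faithfulness because $\mathrm{im}(G)$ is dense in $\widehat G$ and the relevant action maps are continuous). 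Composing, we get an equivalence $\piet(X,\bx)-\mathrm{FSets}\simeq\widehat{G}-\mathrm{FSets}$ over $\mathrm{FSets}$; since both groups are profinite and reconstructed from these categories, it is induced by a unique isomorphism $\widehat{\pipet(X,\bx)}\simeq\piet(X,\bx)$, which one checks to be the canonical map coming from $\mathrm{F\Et}_X\monorarr\rmCov_X$.

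\textbf{Part (2).} The same scheme applies with $\calC_{\pisga(X,\bx)}$ the full subcategory of $\rmCov_X$ consisting of those $Y$ each of whose connected components is dominated by a connected \emph{Galois} geometric covering of $X$: by the construction of the SGA3 fundamental pro-group \cite[Exp.~X.6]{SGA3vol2} this subcategory is equivalent to $\pisga(X,\bx)-\rmSets$ over $F_{\bx}$, a connected Galois geometric covering with group $Q$ corresponding to an open normal $N\triangleleft G$ with $G/N\cong Q$. Under $\rmCov_X\simeq G-\rmSets$ it becomes the $G$-sets that are disjoint unions of transitive $(G/N)$-sets with $N$ open normal in $G$; and unwinding the continuity of an action of $G^{\vee}=\invlim_{N\triangleleft_o G}G/N$ shows this is exactly the essential image of $G^{\vee}-\rmSets\to G-\rmSets$. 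As in Part (1) we conclude with a canonical isomorphism $(\pipet(X,\bx))^{\vee}\simeq\pisga(X,\bx)$.

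I expect the real work to be in Part (2): matching the category underlying the pro-group of \cite[Exp.~X.6]{SGA3vol2} with $\calC_{\pisga(X,\bx)}$ — in particular that the limit is taken over all open normal subgroups and that the resulting prodiscrete group is Hausdorff and complete, so that it genuinely is the prodiscrete completion — and checking throughout that the isomorphisms obtained are the canonical comparison maps rather than abstract equivalences, which forces one to carry $F_{\bx}$ along at every step. The purely group-theoretic inputs (finite $G$-sets factor through finite quotients; continuous $G^{\vee}$-sets are componentwise $(G/N)$-sets) are routine.
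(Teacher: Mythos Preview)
The paper's own proof is a one-line citation: ``This follows from \cite[Lemma 7.4.3]{BhattScholze} and \cite[Lemma 7.4.6]{BhattScholze}.'' Your proposal is essentially an unpacking of how those two lemmas are proved in \cite{BhattScholze}, so the approach is the same, just at a finer level of detail.

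One small caution on Part~(2): the step ``by the construction of the SGA3 fundamental pro-group this subcategory is equivalent to $\pisga(X,\bx)-\rmSets$'' is doing real work and is not quite definitional from \cite[Exp.~X.6]{SGA3vol2}. There the group is built as a pro-object from a Galois-type category that is defined intrinsically (via certain locally constant \'etale sheaves), not as a subcategory of $\rmCov_X$; matching that category with your $\calC_{\pisga(X,\bx)}\subset\rmCov_X$ is precisely the content of \cite[Lemma 7.4.6]{BhattScholze}. You correctly flag this as the main point, but as written the sentence reads as if it were immediate. Once that identification is made, the rest of your argument (that the essential image of $G^{\vee}-\rmSets\to G-\rmSets$ is the orbitwise-$(G/N)$ sets for $N\triangleleft_o G$) is straightforward and matches the Bhatt--Scholze proof.
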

\begin{proof}
This follows from \cite[Lemma 7.4.3]{BhattScholze} and \cite[Lemma 7.4.6]{BhattScholze}.
\end{proof}
As shown in \cite[Example 7.4.9]{BhattScholze}, $\pipet(X,\bx)$ is indeed more general than $\pi_1^{\mathrm{SGA3}}(X,\bx)$. This can be also seen by combining \cite[Ex. 4.5]{PartI} with \cite[Prop. 4.8]{PartI}.

The following lemma is very useful. Recall that, for example, a normal scheme is geometrically unibranch.
\begin{lemma}(\cite[Lm. 7.4.10]{BhattScholze})\label{proetale-of-normal}
If $X$ is geometrically unibranch, then $\pipet(X,\bx) \simeq \piet(X,\bx)$.
\end{lemma}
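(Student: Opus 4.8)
The plan is to prove that $\pipet(X,\bx)$ is compact. Since it is a Noohi group, compactness then forces it to be profinite: its open subgroups form a neighbourhood basis of $1$ and, by compactness, each has finite index, so the open \emph{normal} finite-index subgroups also form such a basis, and Ra{\u \i}kov completeness identifies $\pipet(X,\bx)$ with the inverse limit of the corresponding finite quotients. As a profinite group equals its own profinite completion, this yields $\pipet(X,\bx)\cong\piet(X,\bx)$.

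First I would reduce to the case in which $X$ is integral and normal. A connected geometrically unibranch scheme is irreducible, and its normalization $\nu\colon X^{\nu}\to X$ is then an integral universal homeomorphism. By topological invariance of the small \'etale site, pullback along $\nu$ is an equivalence between \'etale $X$-schemes and \'etale $X^{\nu}$-schemes, and one checks that this equivalence and its inverse respect the valuative criterion of properness --- the valuative lifting property is stable under arbitrary base change, and it descends along the universal homeomorphism $\nu$. Hence $\nu^{\ast}\colon\rmCov_X\to\rmCov_{X^{\nu}}$ is an equivalence compatible with the fibre functors, so $\pipet(X,\bx)\cong\pipet(X^{\nu},\bx)$ as topological groups, and likewise $\piet(X,\bx)\cong\piet(X^{\nu},\bx)$ compatibly. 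We may thus assume $X$ is integral and normal, with generic point $\eta=\Spec K$.

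Next, let $Y\in\rmCov_X$ be connected. As $Y\to X$ is \'etale and $X$ is normal, $Y$ is normal, hence --- being connected --- integral; its generic point $y_0$ maps to $\eta$, and since $Y\to X$ is \'etale, with discrete fibres, $y_0$ is the only point of $Y$ over $\eta$. Therefore $Y_{\eta}=Y\times_X\eta=\Spec\kappa(y_0)$ with $\kappa(y_0)/K$ finite separable; in particular $Y_{\eta}$ is connected. So the base-change functor $\rmCov_X\to\rmCov_{\eta}$ preserves connectedness, which is precisely the statement that the continuous homomorphism $\pipet(\eta)\to\pipet(X)$ induced by $\eta\hookrightarrow X$ has dense image. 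Now by the quoted structure result for henselian local schemes, every geometric covering of the field $\eta$ is a disjoint union of finite \'etale $\eta$-schemes, so $\pipet(\eta)=\piet(\eta)=\Gal(K^{\rmsep}/K)$ is profinite, in particular compact. Its image in the Hausdorff group $\pipet(X)$ is then closed; being dense, it is all of $\pipet(X)$, so $\pipet(X)$ is compact. By the first paragraph, $\pipet(X,\bx)\cong\piet(X,\bx)$.

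I expect the main obstacle to be the reduction to the normal case --- specifically, the assertion that an integral universal homeomorphism induces an equivalence on the categories of \emph{geometric} coverings, not merely of finite \'etale covers. The \'etale part is classical topological invariance of the \'etale site, and one must also confirm that ``geometrically unibranch'' is precisely the hypothesis making $\nu$ a universal homeomorphism (with minor care when $X$ is non-reduced, and before irreducibility is known); but it is the interplay between universal homeomorphisms and the valuative criterion for the possibly non-quasi-compact covers in $\rmCov$ that must be handled with care. The remaining ingredients --- that a connected covering restricts to a connected covering over $\eta$, that $\rmCov_{\eta}$ is the category of \'etale $\eta$-schemes, and that a compact Noohi group is profinite --- are comparatively routine.
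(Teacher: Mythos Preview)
The paper does not give its own proof of this lemma; it simply records the statement and cites \cite[Lm.~7.4.10]{BhattScholze}. So there is no in-paper argument to compare against, only the original Bhatt--Scholze proof.

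Your argument is correct. The Bhatt--Scholze proof proceeds differently: it shows directly that every connected $Y\in\rmCov_X$ is finite \'etale, essentially by observing that an \'etale scheme over a geometrically unibranch scheme is again geometrically unibranch, so a connected $Y$ is irreducible, and then arguing that an irreducible geometric covering is quasi-compact (compare Lemma~\ref{irr-comp-of-cover-qc} of this very paper), hence of finite type, hence finite by the valuative criterion. Your route is more indirect but pleasantly soft: rather than analysing $Y$ pointwise, you push the whole problem to the generic point and use that $\Gal_K$ is compact, so its continuous image in the Hausdorff group $\pipet(X)$ is closed; combined with density this forces $\pipet(X)$ to be compact, hence profinite. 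Both approaches ultimately hinge on the same elementary fact---that a connected $Y\in\rmCov_X$ has a one-point generic fibre---but you leverage it via topological group theory instead of a finiteness argument for $Y$.

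Two remarks. First, your reduction to the normal case is valid but not really needed: once you know that \'etale over geometrically unibranch is geometrically unibranch (strict henselizations are unchanged), a connected $Y\in\rmCov_X$ is already irreducible, and the ``unique point over $\eta$'' argument goes through verbatim without ever normalising. This sidesteps precisely the step you flagged as the main obstacle. Second, if you do keep the reduction, the claim that a universal homeomorphism induces an equivalence on $\rmCov$ (not just on finite \'etale covers) is true and is what you need; it follows from topological invariance of the small \'etale site together with the observation that the valuative criterion is detected \'etale-locally on the source, but you are right that this deserves a line of justification rather than a parenthetical ``one checks''.
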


There is another way of looking at the pro-\'etale fundamental group, which justifies the name "pro-\'etale". 
\begin{defn}
\begin{enumerate}
  \item A map $f : Y \rarr X$ of schemes is called \emph{weakly \'etale} if $f$ is flat and the diagonal $\Delta_f : Y \rarr Y\times_XY$ is flat.
  \item The pro-\'etale site $X_\proet$ is the site of weakly \'etale $X$-schemes, with covers given by fpqc covers.  
\end{enumerate}  
\end{defn}
This definition of pro-\'etale site is justified by a foundational theorem -- part \ref{olivier-item} of the following fact.
\begin{fact}
  Let $f : A \rarr B$ be a map of rings.
  \begin{enumerate}[label=\alph*)]
      \item $f$ is étale if and only if $f$ is weakly étale and finitely presented.
      \item If $f$ is ind-étale, i.e. $B$ is a filtered colimit of étale A-algebras, then $f$ is weakly étale.
      \item \label{olivier-item} (\cite[Theorem 2.3.4]{BhattScholze}) If $f$ is weakly étale, then there exists a faithfully flat ind-étale $g : B \rarr C$ such that $g\circ f$ is ind-étale.
  \end{enumerate}
\end{fact}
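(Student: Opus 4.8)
The plan is to treat the three parts asymmetrically: part~\ref{olivier-item} is exactly \cite[Theorem 2.3.4]{BhattScholze} (a theorem of Olivier), a genuinely deep result that I would import verbatim rather than reprove. The real content to establish is a) and b), both of which are standard commutative algebra built on the canonical identification $\Omega_{B/A} \cong I/I^2$, where $I = \Ker(m)$ for the multiplication map $m : B \otimes_A B \rarr B$. I would record these two arguments and flag the one manipulation that deserves care.

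For a), I would argue both implications directly. If $f$ is \'etale, then it is flat and finitely presented by definition, so only the flatness of the diagonal remains; but \'etale maps are unramified, hence $\Delta_f$ is an open immersion, and open immersions are flat, so $f$ is weakly \'etale. Conversely, suppose $f$ is weakly \'etale and finitely presented. Flatness of $f$ is built into weak \'etaleness, so since a flat, finitely presented ring map with $\Omega_{B/A} = 0$ is \'etale, it suffices to prove $\Omega_{B/A} = 0$. I would tensor the short exact sequence $0 \rarr I \rarr B \otimes_A B \xrightarrow{m} B \rarr 0$ with $B$ over $B \otimes_A B$; using the flatness of $\Delta_f$ (i.e. that $B$ is flat over $B \otimes_A B$) together with $I \otimes_{B \otimes_A B} B \cong I/I^2$ and $B \otimes_{B \otimes_A B} B \cong B$, this yields an exact sequence $0 \rarr I/I^2 \rarr B \rarr B \rarr 0$ whose right-hand map is an isomorphism. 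Hence $\Omega_{B/A} \cong I/I^2 = 0$, and $f$ is \'etale.

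For b), write $B = \rmcolim_i B_i$ as a filtered colimit of \'etale $A$-algebras. Flatness of $A \rarr B$ is immediate, as a filtered colimit of flat modules is flat. For the flatness of the diagonal I would factor the multiplication $B \otimes_A B \rarr B$ through $B \otimes_{B_i} B$ for each fixed $i$. On one hand, there is a canonical isomorphism $B \otimes_{B_i} B \cong (B \otimes_A B) \otimes_{B_i \otimes_A B_i} B_i$, exhibiting $B \otimes_A B \rarr B \otimes_{B_i} B$ as the base change of $m_i : B_i \otimes_A B_i \rarr B_i$; since $B_i$ is \'etale, hence weakly \'etale by a), the map $m_i$ is flat, and therefore $B \otimes_{B_i} B$ is flat over $B \otimes_A B$. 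On the other hand, tensor products commute with filtered colimits in the base ring, so $\rmcolim_i (B \otimes_{B_i} B) \cong B \otimes_B B \cong B$. Thus $B$, regarded as a $(B \otimes_A B)$-module via the diagonal, is a filtered colimit of flat $(B \otimes_A B)$-modules over the fixed ring $B \otimes_A B$, hence flat. Consequently $\Delta_f$ is flat and $f$ is weakly \'etale.

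The only genuinely hard input is part~\ref{olivier-item}, which I would not prove but cite from \cite{BhattScholze}. Within a) and b) the subtle point is entirely the bookkeeping in b): one must recognize that the base ring in the relevant colimit can be kept fixed at $B \otimes_A B$ (so that ``colimit of flat'' really gives ``flat''), by first base-changing each flat diagonal $m_i$ up to $B \otimes_A B$ via the isomorphism $B \otimes_{B_i} B \cong (B \otimes_A B) \otimes_{B_i \otimes_A B_i} B_i$. A naive attempt to compare the systems $\{B_i \otimes_A B_i \rarr B_i\}$ directly fails, because $B_i \otimes_{B_i \otimes_A B_i} (B_j \otimes_A B_j) \cong B_j \otimes_{B_i} B_j$ rather than $B_j$; the factorization through $B \otimes_{B_i} B$ is precisely what circumvents this.
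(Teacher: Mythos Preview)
Your arguments for a) and b) are correct, and your decision to cite \cite[Theorem 2.3.4]{BhattScholze} for part~\ref{olivier-item} is appropriate. However, the paper itself does not prove this Fact at all: it is stated in the overview section as a result imported from \cite{BhattScholze}, with no accompanying proof environment. So there is no ``paper's own proof'' to compare against; you have simply supplied more than the paper does.

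That said, a brief remark on what you have written. Your proof of a) is the standard one and is fine; the key step $I \otimes_{B \otimes_A B} B \cong I/I^2$ holds because $B \cong (B \otimes_A B)/I$. Your proof of b) is also correct, and you have correctly identified the one point requiring care: the colimit $\rmcolim_i (B \otimes_{B_i} B)$ must be taken over the \emph{fixed} ring $B \otimes_A B$, which is exactly what the isomorphism $B \otimes_{B_i} B \cong (B \otimes_A B) \otimes_{B_i \otimes_A B_i} B_i$ achieves. Your closing paragraph explaining why the na\"ive approach fails is accurate and a nice touch.
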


\begin{defn}(\cite[Defn. 7.3.1.]{BhattScholze}) 
We say that $F \in \rmShv(X_{\proet})$ is \emph{locally constant} if there exists a cover $\{Y_i \rarr X\}$ in $X_{\proet}$ with $F|_{Y_i}$ constant. We write $\rmLoc_X$ for the corresponding full subcategory of $\rmShv(X_{\proet})$.
\end{defn}

We are ready to state the following important result. As explained in \cite[Ex. 7.3.12]{BhattScholze}, this theorem needs the  assumption of being locally topologically noetherian to hold.
\begin{theorem}(\cite[Lemma 7.3.9.]{BhattScholze}) One has $\rmLoc_X = \rmCov_X$ as subcategories of $\rmShv(X_{\proet})$.
\end{theorem}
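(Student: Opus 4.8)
The plan is to realize both classes as full subcategories of $\rmShv(X_{\proet})$ and then to prove the two inclusions $\rmCov_X \subseteq \rmLoc_X$ and $\rmLoc_X \subseteq \rmCov_X$ separately. For the embedding of $\rmCov_X$, recall that a geometric covering $Y \to X$ is flat with diagonal an open immersion, hence weakly \'etale, so it is an object of $X_{\proet}$; its functor of points $\Hom_X(-,Y)$ is then a sheaf because representable presheaves are sheaves for the fpqc topology, and Yoneda makes $\rmCov_X \hookrightarrow \rmShv(X_{\proet})$ fully faithful. With this, the theorem reduces to the assertion that a sheaf $F \in \rmShv(X_{\proet})$ is locally constant if and only if it is representable by a geometric covering.

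For $\rmCov_X \subseteq \rmLoc_X$ I would show that every geometric covering trivializes over a suitable pro-\'etale cover. The key input is \cite[Lm. 7.3.8]{BhattScholze}: over a henselian local scheme any $Y \in \rmCov$ is a disjoint union of finite \'etale covers, and over a \emph{strictly} henselian local scheme these are moreover split, so there $Y$ is a disjoint union of copies of the base, i.e.\ a constant sheaf. To globalize I would base-change along a pro-\'etale cover $X' \to X$ by a w-local scheme whose local rings at closed points are strictly henselian (for instance a w-contractible cover, which exists for any scheme). Over each connected component of $X'$ the covering splits into copies of that component; since $X'$ is w-local, $\pi_0(X')$ is profinite, and after refining $X'$ along the clopen decomposition of $\pi_0(X')$ --- on whose pieces the number of sheets is a constructible, hence locally constant, function --- these local splittings assemble into a genuine constant sheaf. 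This exhibits $F_Y$ as an object of $\rmLoc_X$.

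For $\rmLoc_X \subseteq \rmCov_X$ I would first note that a constant sheaf with value a discrete set $I$ is represented by $\coprod_{i \in I} X$, which is \'etale and satisfies the valuative criterion of properness because each component maps isomorphically to $X$; thus constant sheaves lie in $\rmCov_X$. For a general $F \in \rmLoc_X$, choose a pro-\'etale cover $X' \to X$ with $F|_{X'}$ constant; then $F|_{X'}$ is represented by a geometric covering $Y'$ of $X'$ carrying the descent datum induced by $F$, and it remains to descend $Y'$ to a geometric covering of $X$ representing $F$. Because being weakly \'etale and satisfying the valuative criterion of properness are fpqc-local on the base, once the descent is known to be effective the descended object again lies in $\rmCov_X$; the effectivity I would obtain by reducing the gluing, via \cite[Lm. 7.3.8]{BhattScholze}, to the separated and locally finite situation over the strictly henselian local pieces, where descent of schemes is available.

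The hard part will be precisely the two places where local topological noetherianity enters: the upgrade from a disjoint union of copies of clopen pieces to a genuine constant sheaf in the first inclusion, and the effectivity of pro-\'etale descent for the non-quasi-compact \'etale schemes underlying geometric coverings in the second. Both rest on controlling the number of sheets by a constructibility argument that fails without a noetherian hypothesis; indeed \cite[Ex. 7.3.12]{BhattScholze} shows that without it a locally constant pro-\'etale sheaf need not be representable by a scheme satisfying the valuative criterion, so the two categories genuinely diverge. Reducing every step to the strictly henselian local pieces of \cite[Lm. 7.3.8]{BhattScholze}, where geometric coverings are transparently split, is what I expect to make both directions go through.
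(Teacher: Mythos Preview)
The paper does not prove this statement; it merely quotes it as \cite[Lemma 7.3.9]{BhattScholze} in the overview section, so there is no ``paper's own proof'' to compare against. Your outline is therefore being matched against the original Bhatt--Scholze argument (which the paper only alludes to), not against anything in this article.

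That said, two points of substance. First, in the direction $\rmCov_X \subseteq \rmLoc_X$, your claim that the ``number of sheets is a constructible, hence locally constant, function'' on $\pi_0(X')$ is not correct for infinite geometric coverings: the fibres can be infinite and of varying cardinality. The Bhatt--Scholze proof passes instead through the intermediate notion of \emph{locally weakly constant} sheaves (pulled back from $\pi_0$ rather than constant), which this paper recalls in Remark~\ref{classical-remark}; one shows $\rmCov_X \subseteq w\rmLoc_X$ and $w\rmLoc_X \subseteq \rmLoc_X$ separately, and it is the second step that uses local topological noetherianity in an essential way.

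Second, and more seriously, in the direction $\rmLoc_X \subseteq \rmCov_X$ your plan for effectivity of descent (``reducing the gluing \ldots\ to the strictly henselian local pieces, where descent of schemes is available'') is exactly the gap the author of this paper flags in \S\ref{section:proof} and the subsequent subsection on pro-\'etale descent: na\"ive fpqc descent of the non-quasi-compact \'etale scheme $Y'$ only yields an fpqc sheaf or at best an algebraic space, and it is not obvious why this is a scheme. The paper records (just above Prop.~\ref{descentandlimits}) that the actual mechanism in Bhatt--Scholze is that objects of $\rmLoc_X$ are \emph{classical}, i.e.\ lie in the essential image of $\nu^*$ from the \'etale site, and hence are already representable by \'etale $X$-schemes; the valuative criterion is then checked directly. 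Your sketch does not invoke classicality and does not supply an alternative, so as written the descent step is a genuine gap.
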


\begin{rmk}\label{classical-remark}
There is also a notion of \emph{locally weakly constant} sheaves (\cite[Defn. 7.3.1.]{BhattScholze}) that is sometimes useful. These are $F$ such that there exists a cover $\{Y_i \rarr X\}$ in $X_{\proet}$ with $Y_i$ qcqs such that $F|_{Y_i}$ is classical and is the pullback via $\pi$ of a sheaf on the profinite set $\pi_0(Y_i)$. Here, $\pi$ refers to a map of topoi $\pi : \rmShv(Y_{\proet}) \rarr \rmShv(\pi_0(Y)_{\proet})$ discussed in \cite{BhattScholze}. A sheaf is \emph{classical} if it lies in the essential image of $\nu^* : \rmShv(X_{\et}) \rarr \rmShv(X_{\proet})$. The pullback $\nu^* : \rmShv(X_{\et}) \rarr \rmShv(X_{\proet})$ is fully faithful. Its essential image consists exactly of those sheaves $F$ with $F(U) = \rmcolim_i F(U_i)$ for any $U = \lim_i U_i$, where $i \mapsto U_i$ is a small cofiltered diagram  of affine schemes in $X_{\et}$. Moreover, if a sheaf $G \in \rmShv(X_{\proet})$ is classical when restricted to some pro-\'etale cover $\{Y_i \rarr X\}$, then $G$ is classical. By \cite[Lemma 7.3.9.]{BhattScholze}, one has $\rmLoc_X = w\rmLoc_X = \rmCov_X$, where $w\rmLoc_X$ denotes the full subcategory of locally weakly constant sheaves.
\end{rmk}

Let us gather some notions and results that play an important role in the study of the pro-\'etale topology. They were introduced in \cite[\S 2]{BhattScholze}. They are also nicely presented in \cite[Chapter 0965]{StacksProject}.

\begin{defn} 
\begin{enumerate}
  \item A spectral space $X$ is \emph{w-local} if it satisfies:
  \begin{enumerate}
      \item All open covers split, i.e., for every open cover $\{U_i \monorarr X\}$, the map $\sqcup_i U_i \rarr X$ has a section.
      \item The subspace $X^c \subset X$ of closed points is closed.
  \end{enumerate}
  \item Fix a ring $A$.
  \begin{enumerate}
  \item $A$ is \emph{w-local} if $\Spec(A)$ is w-local.
  \item $A$ is \emph{w-strictly local} if $A$ is w-local, and every faithfully flat \'etale map $A \rarr B$ has a section.
  \end{enumerate}
  \item A ring $A$ is w-contractible if every faithfully flat ind-étale map $A \rarr B$ has a section.
  \item A compact Hausdorff space is \emph{extremally disconnected} if the closure of every open is open.
\end{enumerate}
\end{defn}

\begin{fact}
  \begin{enumerate}
    \item A spectral space $X$ is w-local if and only if $X^c \subset X$ is closed, and every connected component of $X$ has a unique closed point. For such $X$, the composition $X^c \rarr X \rarr \pi_0(X)$ is a homeomorphism.
    \item A w-local ring $A$ is w-strictly local if and only if all local rings of $A$ at closed points are strictly henselian.
    \item Any ring $A$ admits an ind-\'etale faithfully flat map $A \rarr A'$ with $A'$ w-strictly local.
    \item (\cite{Gleason}) Extremally disconnected spaces are exactly the projective objects in the category of all compact Hausdorff spaces, i.e., those $X$ for which every continuous surjection $Y \rarr X$ splits.
    \item A w-strictly local ring $A$ is w-contractible if and only if $\pi_0(\Spec(A))$ is extremally disconnected.
    \item For any ring $A$, there is an ind-\'etale faithfully flat $A$-algebra $A'$ with $A'$ w-contractible.
  \end{enumerate}
\end{fact}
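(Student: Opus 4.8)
The final statement is item (6) of the Fact: for any ring $A$, there is an ind-\'etale faithfully flat $A$-algebra $A'$ with $A'$ w-contractible. The strategy is to build $A'$ in two stages, first achieving w-strict locality and then correcting the topology of the component space to be extremally disconnected. The two inputs I would rely on are item (3) of the Fact (every $A$ admits an ind-\'etale faithfully flat $A \rarr A'$ with $A'$ w-strictly local) and item (5) (a w-strictly local ring is w-contractible precisely when $\pi_0(\Spec(A'))$ is extremally disconnected). Combining these, it suffices to arrange, after a further ind-\'etale faithfully flat base change, that the profinite space of connected components becomes extremally disconnected while w-strict locality is preserved.

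\textbf{First step: reduce to a problem about profinite spaces.} Applying item (3), I would first replace $A$ by an ind-\'etale faithfully flat $A_0$ that is w-strictly local; write $T_0 = \pi_0(\Spec(A_0))$, a profinite (compact Hausdorff totally disconnected) space. By item (5) the only remaining obstruction is that $T_0$ need not be extremally disconnected. Now I invoke item (4) (Gleason's theorem): extremally disconnected spaces are exactly the projective objects in compact Hausdorff spaces, so every profinite $T_0$ admits a continuous surjection $T \epirarr T_0$ from an extremally disconnected (hence profinite) $T$ — concretely, one may take the Gleason cover, or simply the Stone--\v{C}ech compactification $\beta(T_0^{\rmdiscr})$ of the underlying discrete set, which is extremally disconnected and surjects onto $T_0$.

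\textbf{Second step: realize the cover of component spaces by an ind-\'etale base change.} The heart of the argument is to turn the surjection $T \epirarr T_0$ of profinite sets into an honest ind-\'etale faithfully flat $A_0$-algebra $A'$ with $\pi_0(\Spec(A')) = T$ and $A'$ still w-strictly local. For a w-strictly local $A_0$, the component space $T_0 = \pi_0(\Spec A_0)$ controls the \emph{disconnected} (i.e. idempotent/Zariski-locally-constant) base changes: writing $T_0 = \invlim_j T_{0,j}$ as a cofiltered limit of finite discrete sets, each continuous map to a finite set corresponds to a decomposition of $\Spec(A_0)$ into open-and-closed pieces, i.e. to a finite product decomposition of $A_0$. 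A surjection $T \epirarr T_0$ from a profinite $T$ is likewise an inverse limit of finite surjections, and each such finite datum is realized by a faithfully flat base change that only refines the idempotent structure (it is \emph{pro-(finite \'etale of the split/disconnected type)}, hence ind-\'etale and faithfully flat). Passing to the filtered colimit yields $A' := A_0 \otimes_{C(T_0)} C(T)$-type construction, an ind-\'etale faithfully flat $A_0$-algebra whose spectrum has $\pi_0 = T$; because the base change is built entirely from splitting idempotents it does not disturb the local rings at closed points, so $A'$ remains w-strictly local. Since $T$ is extremally disconnected, item (5) gives that $A'$ is w-contractible, and composing $A \rarr A_0 \rarr A'$ (a composite of ind-\'etale faithfully flat maps, hence ind-\'etale faithfully flat) completes the proof.

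\textbf{Main obstacle.} The delicate point is the second step: making precise that the \emph{purely topological} surjection $T \epirarr T_0$ of component spaces is induced by a genuine ind-\'etale faithfully flat ring map that \emph{preserves} w-strict locality. One must check that refining $\pi_0$ along a profinite surjection can be done by splitting idempotents only (so that the map is faithfully flat and leaves the closed-point local rings strictly henselian untouched), rather than by introducing genuinely new \'etale covers that could alter the residue fields. Concretely this means identifying the functor $A_0 \mapsto \pi_0(\Spec A_0)$ for w-strictly local rings with a suitable pro-\'etale/idempotent-completion construction and verifying that it is compatible with cofiltered limits; once this compatibility is in hand, the colimit construction and the appeal to Gleason's projectivity (item (4)) and the w-contractibility criterion (item (5)) assemble into the result.
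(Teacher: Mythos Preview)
The paper does not prove this Fact at all: it is stated in the overview section as a collection of results quoted from \cite{BhattScholze} (and \cite{Gleason} for item (4)), with no argument given. So there is no ``paper's own proof'' to compare against.

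That said, your outline for item (6) is essentially the argument given in the original source, \cite[Lemma~2.4.9]{BhattScholze}: first pass to a w-strictly local $A_0$ via item (3), then choose an extremally disconnected profinite cover $T \twoheadrightarrow \pi_0(\Spec A_0)$ using Gleason/Stone--\v{C}ech, and realize this topological cover by an ind-\'etale faithfully flat base change that only refines idempotents. The point you correctly flag as the main obstacle --- that a surjection of profinite $\pi_0$'s can be lifted to an honest pro-(Zariski localization) map of affine schemes that does not disturb the strictly henselian local rings --- is exactly what Bhatt--Scholze handle via their equivalence between ind-(Zariski localizations) of a w-local affine scheme and pro-(finite) spaces over its $\pi_0$ (see \cite[Lemma~2.2.8 and Lemma~2.2.12/2.2.13]{BhattScholze}, or \cite[Tag~096B]{StacksProject}). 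Your tensor-product description $A_0 \otimes_{C(T_0,\bbZ)} C(T,\bbZ)$ is the right concrete model; once one knows that this is ind-\'etale, faithfully flat, and leaves closed-point localizations unchanged (all of which follow by writing $T \rarr T_0$ as a cofiltered limit of surjections of finite sets), items (4) and (5) finish the proof as you say.
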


\addtocontents{toc}{\SkipTocEntry}
\subsection*{Recollection of some facts from Part I}
In \cite[\S 2.2]{PartI} we defined and discussed properties of a "Noohi completion". For a (Hausdorff) topological group $G$, it is defined to be $G^{\rmNoohi} = \Aut(F_G)$. It is a Noohi group and has the following properties:
\begin{itemize}
  \item any (continuous) morphism from $G$ to a Noohi group factorizes through the natural map $\alpha_G : G \rarr G^{\rmNoohi}$;
  \item $F_G$ induces an equivalence $\tilde{F}_G : G - \rmSets \rarr G^{\rmNoohi} - \rmSets$ and $\alpha_G^* \circ \tF_G \simeq \id$.
\end{itemize}

Let us now recall a part of the dictionary between statements regarding exactness of sequences of Noohi groups and statements about the induced maps on the categories of $G-\rmSets$. Before we start, recall (\cite[Defn. 2.29]{PartI}) that the "thick closure" $\bbH$ of a subgroup $H$ of a topological group $G$ is defined to be the intersection of all open subgroups of $G$ containing $H$, i.e. $\bbH:=\bigcap_{H \subset U < G \textrm{, $U$ open}}U$. If a subgroup satisfies $H = \bbH$ we will call it thickly closed in $G$.

\begin{proposition}\label{dictionary}(part of \cite[Prop. 2.37]{PartI})
    Let $G'' \stackrel{h'}\rightarrow G' \stackrel{h}\rightarrow G$ be maps between Noohi groups and $\mathcal{C''} \stackrel{H'}\leftarrow \mathcal{C'} \stackrel{H}\leftarrow \mathcal{C}$ the corresponding functors between the infinite Galois categories, i.e. $\calC = G - \rmSets$, $H = h^*$ and so on. Then the following hold:
  
\begin{enumerate}[label={(\arabic*)}]
\item \label{denseimageequivalentconditions} The following are equivalent
  \begin{enumerate}
  \item The morphism $h:G' \rarr G$ has dense image;
  \item The functor $H$ maps connected objects to connected objects.
  \end{enumerate}
  
\item $h'(G'') \subset \Ker(h)$ if and only if the composition $H' \circ H$ maps any object to a completely decomposed object, i.e. to a (possibly infinite) disjoint union of final objects.
  
\item \label{dictionary-kernel} Assume that $h'(G'') \subset \Ker(h)$ and that $h:G' \rarr G$ has dense image. Then the following conditions are equivalent:
  
\begin{enumerate}
  \item $\overline{\overline{\Image(h')}}=\Ker(h)$ and the induced map $(G'/\ker(h))^{\rmNoohi} \rarr G$ is an isomorphism;
  \item for any connected $Y \in \calC'$ such that $H'(Y)$ contains a final object of $\calC''$, $Y$ is in the essential image of $H$.
  \end{enumerate}
  \end{enumerate}
Here, being connected is equivalent to the action of the group being transitive. 
\end{proposition}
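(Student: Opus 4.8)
The plan is to translate every categorical condition into the language of open subgroups and orbits. A connected object of $G-\rmSets$ is a transitive set $G/U$ with $U \leq G$ open, and $H = h^*$ sends $G/U$ to the same underlying set with $G'$ acting through $h$, whose base-point stabilizer is the open subgroup $h^{-1}(U) \leq G'$; when $h$ has dense image this is again transitive, so $H(G/U) \cong G'/h^{-1}(U)$, and these (and their coproducts) exhaust the essential image of $H$. The one preliminary observation I would single out is that $\Ker(h) = \bigcap_U h^{-1}(U)$, the intersection ranging over all open $U \leq G$: since $G$ is Noohi its open subgroups intersect in $\{e\}$, and pulling back exhibits $\Ker(h)$ as an intersection of open subgroups of $G'$. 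In particular $\Ker(h)$ is thickly closed, so the standing hypothesis $h'(G'') \subseteq \Ker(h)$ of parts (2)–(3) gives $\overline{\overline{\Image(h')}} \subseteq \Ker(h)$ for free, and (3)(a) is precisely the assertion that this inclusion is an equality, together with a completion statement.

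For part (1), $h^*(G/U)$ is connected as a $G'$-set exactly when $h(G')$ acts transitively, i.e. when $h(G')\cdot U = G$. As cosets of open subgroups form a basis of the topology of $G$, density of $h(G')$ is equivalent to $h(G')\cdot U = G$ for \emph{every} open $U$, which yields both implications at once: if $h$ is dense every $h^*(G/U)$ is connected, while if $h$ is not dense some open $U$ witnesses $h(G')\cdot U \neq G$, producing a connected object with disconnected image. Part (2) is the faithfulness of the $G$-action: $H'\circ H = (h\circ h')^*$ equips each $G$-set with the $G''$-action through $h\circ h'$, and testing on the objects $G/U$ shows this action is trivial for all $G$-sets iff $h(h'(G'')) \subseteq \bigcap_U U = \{e\}$, i.e. iff $h'(G'')\subseteq \Ker(h)$.

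For part (3) I would factor $h$ as $G' \xrightarrow{q} K \xrightarrow{\iota} G$ with $K = G'/\Ker(h)$, so $H = q^*\circ \iota^*$; here $q^*$ is fully faithful with essential image the $\Ker(h)$-trivial $G'$-sets, and $\iota$ is injective with dense image. The key translation is that a connected $Y = G'/V$ satisfies the hypothesis of (b) — that $H'(Y)$ contains a $\calC''$-final object — precisely when $h'(G'')$ lies in some conjugate of $V$; after moving the base point we may assume $h'(G'') \subseteq V$, whence $\overline{\overline{\Image(h')}} \subseteq V$ as $V$ is open (hence thickly closed). Granting (3)(a), the map $\iota^*$ is an equivalence, so the essential image of $H$ equals that of $q^*$, namely the $\Ker(h)$-trivial objects; any $Y$ as in (b) then has $\Ker(h) = \overline{\overline{\Image(h')}} \subseteq V$, so $Y$ is $\Ker(h)$-trivial and lies in the essential image of $H$, giving (b).

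The reverse direction is where I expect the real work to lie. Assuming (b), I would first apply it to $Y = G'/V$ for each open $V \supseteq \Image(h')$: the base point is $G''$-fixed, so (b) forces $V$ to be conjugate to some $h^{-1}(U)$, and since $\Ker(h)$ is normal it is contained in every such conjugate, giving $\Ker(h) \subseteq V$; intersecting over all such $V$ yields $\Ker(h) \subseteq \overline{\overline{\Image(h')}}$, hence the desired equality. The hardest point is then to deduce that $\iota^*\colon G-\rmSets \to K-\rmSets$ is an equivalence, and here essential surjectivity is the crux: every connected $\Ker(h)$-trivial object $G'/V$ (so $\Ker(h) \subseteq V$) automatically satisfies $\Image(h') \subseteq \Ker(h) \subseteq V$, so (b) places it in the essential image of $H = q^*\iota^*$, and descending through the equivalence $q^*$ puts the corresponding $K$-set in the essential image of $\iota^*$. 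Full faithfulness of $\iota^*$ is softer: a $K$-equivariant map between (underlying sets of) $G$-sets agrees with a $G$-equivariant one on the dense subgroup $\iota(K)$, and the locus of agreement is closed with discrete target, hence is all of $G$. Finally, an equivalence of infinite Galois categories commuting with the forgetful functors induces an isomorphism on the reconstructed Noohi groups $\Aut(F)$; applied to $\iota^*$ this is exactly $(G'/\Ker(h))^{\rmNoohi}\xrightarrow{\sim} G$, completing (3)(a).
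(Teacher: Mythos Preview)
The paper does not prove this proposition at all: it is stated as a recollection of \cite[Prop.~2.37]{PartI} and no argument is given here. So there is no proof in the present paper to compare against, and your attempt should be judged on its own merits.

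Your argument is essentially correct and follows the natural route. Parts (1) and (2) are straightforward translations and you carry them out cleanly; the observation that $\Ker(h)=\bigcap_U h^{-1}(U)$ is exactly the right preliminary. For (3), the factorization $G'\xrightarrow{q}K=G'/\Ker(h)\xrightarrow{\iota}G$ and the identification of the essential image of $q^*$ with the $\Ker(h)$-trivial $G'$-sets is the standard reduction. In the direction (a)$\Rightarrow$(b) you correctly use that $K\textrm{-}\rmSets\simeq K^{\rmNoohi}\textrm{-}\rmSets\simeq G\textrm{-}\rmSets$, so that the essential image of $H$ coincides with that of $q^*$; this is the point where the Noohi-completion enters. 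In the direction (b)$\Rightarrow$(a), your two-step argument---first testing (b) on $G'/V$ for $V\supseteq\Image(h')$ to get $\Ker(h)\subseteq\overline{\overline{\Image(h')}}$, then using (b) again on all $\Ker(h)$-trivial connected objects to obtain essential surjectivity of $\iota^*$---is exactly right. The full faithfulness of $\iota^*$ via density plus discreteness of the target is also correct, though your phrasing (``the locus of agreement is closed'') could be sharpened to: for fixed $s$, the two continuous maps $g\mapsto f(g\cdot s)$ and $g\mapsto g\cdot f(s)$ from $G$ to a discrete set agree on the dense subset $\iota(K)$, hence everywhere. Finally, invoking that an equivalence of tame infinite Galois categories compatible with fibre functors yields an isomorphism of their automorphism groups recovers $(G'/\Ker(h))^{\rmNoohi}\simeq G$.

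One minor point to make explicit: you argue essential surjectivity of $\iota^*$ only on connected objects; since every object of $K\textrm{-}\rmSets$ is a coproduct of connected ones and $\iota^*$ preserves coproducts, this suffices, but it is worth saying.
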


This dictionary suggests that we should work with weaker notions of exactness that translate into a statement about $G-\rmSets$. Recall from \cite[Defn. 2.38]{PartI}.
\begin{defn}\label{definition-weakly-exact}
  Let $G'' \stackrel{h'}{\rarr} G' \stackrel{h}{\rarr} G \rarr 1$ be a sequence of topological groups such that $\rmim(h') \subset \ker(h)$. Then we will say that the sequence is
  \begin{enumerate}[label={(\arabic*)}]
      \item \emph{nearly exact on the right} if $h$ has dense image,
      \item \emph{nearly exact in the middle} if $\overline{\overline{\rmim(h')}}= \ker(h)$, i.e. the thick closure of the image of $h'$ in $G'$ is equal to the kernel of $h$,
      \item \emph{nearly exact} if it is both nearly exact on the right and nearly exact in the middle.
  \end{enumerate}
  \end{defn}

Let us finish this section by mentioning two facts about the geometric coverings. The first one is stated in \cite[Prop. 3.12]{PartI} and \cite[Prop. 1.16]{ElenaPaper} and relies on the results of \cite{Rydh}.
\begin{fact}\label{properdescent}
Let $g: S' \rightarrow S$ be a proper, surjective morphism of finite presentation, then $g$ is a morphism of effective descent for geometric coverings.
\end{fact}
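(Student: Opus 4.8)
The plan is to prove the assertion in two stages: first establish descent at the level of \emph{arbitrary} étale morphisms, and then show that the full subcategory $\rmCov \subset \Et$ of geometric coverings is stable under this descent. Recall that saying $g$ is a morphism of effective descent for geometric coverings means precisely that the base-change functor from $\rmCov_S$ to the category of objects of $\rmCov_{S'}$ equipped with a descent datum relative to $g$ is an equivalence. Since $\rmCov_X$ is a \emph{full} subcategory of the category $\Et_X$ of étale $X$-schemes (a morphism between étale $X$-schemes is automatically étale, and the only extra condition defining $\rmCov_X$ is the valuative criterion of properness), the following elementary reduction applies: if $g$ is a morphism of effective descent for $\Et$, and if for every étale $Y \rarr S$ one has $Y \in \rmCov_S$ if and only if $Y \times_S S' \in \rmCov_{S'}$, then $g$ is automatically a morphism of effective descent for $\rmCov$.

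For the first stage I would invoke the descent theory of Rydh \cite{Rydh}. A proper surjective morphism of finite presentation is universally submersive (indeed universally subtrusive) of finite presentation, and Rydh proves that such morphisms are of universal effective descent for the fibered category of étale morphisms. Note that étale morphisms are automatically locally of finite presentation, so no quasi-compactness or finite-presentation hypothesis on $Y \rarr S$ is needed here; this is exactly what lets the argument reach the non-quasi-compact geometric coverings. Concretely, this yields that base change induces an equivalence from $\Et_S$ onto the category of étale $S'$-schemes equipped with a descent datum relative to $g$; in particular morphisms of étale schemes descend (giving full faithfulness) and every descent datum is effected by an étale $S$-scheme $Y$.

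It remains to carry out the second stage, the stability of the subcategory $\rmCov$ under descent. One implication is formal: étale morphisms and the valuative criterion of properness are both stable under base change, so $Y \in \rmCov_S$ implies $Y \times_S S' \in \rmCov_{S'}$. For the converse, suppose $Y \rarr S$ is étale and $Y' := Y \times_S S' \rarr S'$ lies in $\rmCov_{S'}$; I must verify the valuative criterion for $Y \rarr S$. Given a valuation ring $V$ with fraction field $K = \rmFrac(V)$ and a commutative square consisting of $\Spec K \rarr Y$ and $\Spec V \rarr S$, I would lift the valuation along $g$: using surjectivity together with the valuative criterion of properness for the proper morphism $g$, one produces a valuation ring $V'$ dominating $V$, with fraction field $K'$ an extension of $K$, and a map $\Spec V' \rarr S'$ lifting $\Spec V \rarr S$ and compatible with a chosen point of $Y'$ lying over the given $K$-point of $Y$. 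The valuative criterion for $Y' \rarr S'$ then supplies a unique lift $\Spec V' \rarr Y'$, hence a lift $\Spec V' \rarr Y$. Finally, separatedness of $Y \rarr S$ (which also descends from $Y' \rarr S'$ along the surjection $g$) together with a standard uniqueness argument descends this to the required unique lift $\Spec V \rarr Y$. This shows $Y \in \rmCov_S$ and completes the stability check.

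The main obstacle I anticipate is precisely the interaction of descent with the \emph{non-quasi-compactness} of geometric coverings: the cleanest descent statements are formulated for quasi-compact or finitely presented objects, whereas a geometric covering $Y \rarr S$ need be neither. The resolution, as indicated above, is twofold: Rydh's descent is available for the entire fibered category of étale morphisms (which are automatically locally of finite presentation), so no finiteness on $Y$ intervenes; and the only additional property to propagate, the valuative criterion, is tested one valuation ring at a time and therefore descends without any finiteness hypothesis. The one genuinely delicate point inside the valuation-lifting step is arranging the valuation ring $V'$ and the field extension $K'$ compatibly with a prescribed generic lift in $Y'$, and this compatibility is exactly what the valuative criterion for the proper morphism $g$ itself provides.
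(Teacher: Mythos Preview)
The paper does not give a proof of this Fact; it simply records it as a citation to \cite[Prop.~3.12]{PartI} and \cite[Prop.~1.16]{ElenaPaper}, noting that both rely on Rydh's descent results \cite{Rydh}. Your proposal---invoke Rydh's effective descent for the fibered category of \'etale morphisms along a proper surjective morphism of finite presentation, and then check that the valuative-criterion condition singling out $\rmCov \subset \Et$ descends---is exactly the strategy behind those references, and your argument is essentially correct.

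Two points deserve a word of justification. First, your assertion that separatedness of $Y \rarr S$ descends from that of $Y' \rarr S'$ is correct but not purely formal, since $g$ is not assumed flat. The clean reason here is that $Y \rarr S$ is \'etale, so $\Delta_{Y/S}$ is an \emph{open} immersion; its base change $\Delta_{Y'/S'}$ is a closed immersion by hypothesis, and since $g$ (hence its base change to $Y\times_S Y$) is universally submersive, closedness of the image of $\Delta_{Y'/S'}$ forces closedness of the image of $\Delta_{Y/S}$. Second, the ``standard'' descent from the lift $\Spec V' \rarr Y$ to a lift $\Spec V \rarr Y$ comes down to the elementary identity $V = K \cap V'$ inside $K'$, valid because $V'$ dominates $V$ and $V$ is a valuation ring: choosing an affine open $U = \Spec A \subset Y\times_S \Spec V$ containing the image of $\Spec V'$, the ring maps $A \rarr K$ (from the given $K$-point) and $A \rarr V'$ (from the constructed $V'$-point) agree in $K'$, so the image of $A \rarr K$ already lies in $K \cap V' = V$, yielding the desired $V$-point. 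Uniqueness then follows from the separatedness just established.
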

The following fact is a part of \cite[Thm. 4.4]{PartI}.
\begin{fact}\label{arithtogeom}
Let $X$ be a geometrically connected scheme of finite type over a field $k$. Let $\bk$ be an algebraic closure of $k$ and $\bx$ be a $\Spec(\bk)$-point on $X$. Then the induced map $\pipet(X,\bx) \rarr \Gal_k$ is open and surjective.
\end{fact}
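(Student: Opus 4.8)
The plan is as follows. The map comes from the structure morphism $f : X \to \Spec k$: since $\Spec k$ is normal, hence geometrically unibranch, Lemma \ref{proetale-of-normal} identifies $\pipet(\Spec k,\bx)=\piet(\Spec k,\bx)=\Gal_k$, and the map in question is the functorial homomorphism $f_* : \pipet(X,\bx)\to\Gal_k$. Under the equivalences $\rmCov_X\simeq\pipet(X,\bx)-\rmSets$, $\rmCov_{\Spec k}\simeq\Gal_k-\rmSets$ it corresponds to the pullback functor $f^* : \rmCov_{\Spec k}\to\rmCov_X$, $T\mapsto T\times_{\Spec k}X$ (which does land in $\rmCov_X$, as being \'etale and satisfying the valuative criterion are stable under base change). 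First I would show $f_*$ has dense image: by Proposition \ref{dictionary}\ref{denseimageequivalentconditions} this is the statement that $f^*$ sends connected objects to connected objects, and a connected object of $\Gal_k-\rmSets$ is $\Spec k'$ with $k'/k$ finite separable, whose pullback $X_{k'}$ is connected since $X$ is geometrically connected (the surjection $X_{\bk}\to X_{k'}$ has connected source).

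Next, surjectivity. Every object of $\rmCov_{\Spec k}$ is a disjoint union $\bigsqcup_i\Spec k_i$ with each $k_i/k$ finite separable, so its pullback $\bigsqcup_i X_{k_i}$ is a disjoint union of \emph{connected} schemes. Full faithfulness of $f^*$ is then automatic from the density of $\Image(f_*)$ (a $\pipet(X)$-equivariant map between discrete $\Gal_k$-sets is $\Gal_k$-equivariant, since the locus of $g\in\Gal_k$ acting compatibly is a closed subgroup containing the dense subgroup $\Image(f_*)$), and the essential image of $f^*$ is closed under subobjects, since a sub-covering of $\bigsqcup_i X_{k_i}$ is a union of connected components, i.e. of the form $\bigsqcup_{i\in J}X_{k_i}$, which is again a pullback. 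By the Galois-categorical criterion --- the full version of \cite[Prop.~2.37]{PartI}, the Noohi analogue of the classical criterion of \cite[Exp.~V]{SGA1} --- a fully faithful functor of (tame infinite) Galois categories whose essential image is closed under subobjects induces a surjection on fundamental groups; hence $f_*$ is surjective. Double-checking this purely group-theoretic implication is one of the two points I would treat with care.

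The hard part is openness. Given an open $U\le\pipet(X,\bx)$, I would write $U=\rmStab_{\pipet(X)}(\by)$ for a connected $Y\in\rmCov_X$ and a geometric point $\by$ of $Y$ above $\bx$, and let $Y_0\subseteq Y_{\bk}:=Y\times_k\bk$ be the connected component through $\by$. Then $Y_0$ is a connected geometric covering of $X_{\bk}$, so $\pipet(X_{\bk},\by)$ acts transitively on $F_{\bx}(Y_0)\subseteq F_{\bx}(Y)$; and since $\pipet(X_{\bk})\to\pipet(X)\to\Gal_k$ factors through $\pipet(\Spec\bk)=1$, the image of $\pipet(X_{\bk})$ in $\pipet(X)$ lies in $\Ker(f_*)$. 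Using this, the $\pipet(X)$-equivariance of the map $F_{\bx}(Y)\to\pi_0(Y_{\bk})$ (where $\pipet(X)$ acts on the target through $f_*$ together with the Galois action on $Y\times_k\bk$), and the surjectivity of $f_*$ to lift elements of $\Gal_k$, one obtains $f_*(U)=\rmStab_{\Gal_k}([Y_0])$. It remains to see this stabilizer is open. I would pass to a quasi-compact open $W\subseteq Y$ containing the image of $\by$: as $Y\to X$ is \'etale, hence locally of finite presentation, and $X$ is of finite type over $k$, such a $W$ is of finite type over $k$, so $\pi_0(W_{\bk})=\pi_0(W_{k^{\rmsep}})$ is finite; the $\Gal_k$-action on it therefore factors through a finite quotient, and the stabilizer of the component $W_0\ni\by$ contains $\Gal_{k'}$ for some finite separable $k'/k$. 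Since $W_0$ is connected and contains $\by$ we have $W_0\subseteq Y_0$, whence for $\sigma\in\Gal_{k'}$ we get $\sigma(W_0)=W_0\subseteq Y_0\cap\sigma(Y_0)$, forcing $\sigma(Y_0)=Y_0$; thus $\Gal_{k'}\subseteq\rmStab_{\Gal_k}([Y_0])=f_*(U)$ and $f_*$ is open. I expect the genuine obstacle to be concentrated in this last paragraph: establishing $f_*(U)=\rmStab_{\Gal_k}([Y_0])$ rigorously, and isolating precisely where the locally-topologically-noetherian / finite-type hypotheses are needed in order to control the (possibly non-quasi-compact) covering $Y$.
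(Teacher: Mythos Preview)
The paper does not prove this fact here; it simply records it as ``a part of \cite[Thm.~4.4]{PartI}''. So there is no paper proof to compare against, and your sketch must be evaluated on its own merits.

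Your argument has a genuine gap in the surjectivity step. The criterion you invoke --- ``a fully faithful functor of tame infinite Galois categories whose essential image is closed under subobjects induces a \emph{surjection} on fundamental groups'' --- is false for Noohi groups. Take $\phi:\bbZ\to\hat{\bbZ}$: the functor $\phi^*:\hat{\bbZ}\text{-}\rmSets\to\bbZ\text{-}\rmSets$ is fully faithful (both Hom-sets between $\bbZ/n$ and $\bbZ/m$ compute to $\{a\in\bbZ/m:na=0\}$) and its essential image is closed under subobjects (each $\bbZ/n$ is a transitive $\bbZ$-set), yet $\phi$ is not surjective. What such a criterion gives you is only \emph{dense image}; in the classical profinite setting that coincides with surjectivity because compact image is closed, but $\pipet(X)$ is not compact. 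This gap then propagates: your openness argument for a general open $U$ uses the equality $f_*(U)=\rmStab_{\Gal_k}([Y_0])$, and the inclusion $\supseteq$ is exactly where you lift $\sigma\in\Gal_k$ to $\pipet(X)$ --- i.e.\ you use surjectivity. Without it one only gets $f_*(U)=\rmStab_{\Gal_k}([Y_0])\cap\rmim(f_*)$, which is not obviously open.

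A clean fix is to prove openness first, via a closed point, and deduce surjectivity afterwards. Choose any closed point $x\in X$ (so $k(x)/k$ is finite) and a geometric point over it; functoriality gives $\Gal_{k(x)}=\pipet(\Spec k(x))\to\pipet(X)\xrightarrow{f_*}\Gal_k$, and this composite is the natural map of absolute Galois groups, which is a continuous injection of profinite groups with open image (pass through the maximal separable subextension). In particular it is an open map. For any open $U\le\pipet(X)$, its preimage $U'$ in $\Gal_{k(x)}$ is open, and $f_*(U)$ contains the image of $U'$ in $\Gal_k$, which is open; hence $f_*(U)$ is open. Now $\rmim(f_*)$ is an open subgroup of $\Gal_k$, hence closed, hence equals $\Gal_k$ by the density you already established. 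With surjectivity in hand, your $\rmStab_{\Gal_k}([Y_0])$ description of $f_*(U)$ becomes valid as well --- and note that the detour through a quasi-compact $W$ is unnecessary: Lemma~\ref{finitegeomconncomp} applies directly to the connected $Y$ (it is locally of finite type over $k$, hence has a point over a finite extension), giving finiteness of $\pi_0(Y_{k^{\rmsep}})$ outright.
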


\section{Homotopy exact sequence over a general base}
\subsection{Statement of the main result, infinite Stein factorization and some examples}
Let us state the aim of this section. As we will soon see, the main ingredient of the proof will be the construction of the "infinite Stein factorization" for geometric coverings of Thm. \ref{steinforgeomcov} below.
\begin{theorem}\label{homotopy-exact-general-base}
Let $f : X \rightarrow S$ be a flat proper morphism of finite presentation whose geometric fibres are connected and reduced. Assume that $S$ is Nagata and connected. Let $\bs$ be a geometric point of $S$ and let $\bx$ be a geometric point on $X_{\bs}$.  Then the sequence induced on the pro-\'etale fundamental groups
\begin{displaymath}
\pipet(X_{\bs},\bx) \rightarrow \pipet(X,\bx) \rightarrow \pipet(S,\bs) \rightarrow 1
\end{displaymath}
is nearly exact (see Defn. \ref{definition-weakly-exact}).

Moreover, the induced map
\begin{displaymath}
\Big(\pipet(X,\bx)/\overline{\overline{\rmim(\pipet(X_{\bs},\bx))}}\Big)^{\rmNoohi} \rarr \pipet(S,\bs)
\end{displaymath}
is a homeomorphism. Here, $\bbH$ denotes the "thick closure" defined earlier.
\end{theorem}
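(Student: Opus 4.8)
The plan is to translate the whole statement into a statement about geometric coverings by means of the dictionary of Proposition~\ref{dictionary}, applied to the Noohi groups $G'' = \pipet(X_{\bs},\bx)$, $G' = \pipet(X,\bx)$, $G = \pipet(S,\bs)$ and the maps $h'$, $h$ induced by $X_{\bs}\rarr X$ and by $f : X\rarr S$; under the equivalences $\rmCov_{X_{\bs}}\simeq G''-\rmSets$, $\rmCov_X\simeq G'-\rmSets$, $\rmCov_S\simeq G-\rmSets$, the associated functors $H' = (h')^{*}$ and $H = h^{*}$ are the pullback functors along $X_{\bs}\rarr X$ and $X\rarr S$. With this set-up, the theorem reduces to three claims: (i) $h'(G'')\subset\Ker(h)$; (ii) $h$ has dense image; (iii) every connected $Y\in\rmCov_X$ such that $Y\times_X X_{\bs}\rarr X_{\bs}$ admits a section lies in the essential image of $h^{*}$. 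Indeed, (ii) is exactly near-exactness on the right by \ref{dictionary}\ref{denseimageequivalentconditions}; and, granting (i) and (ii), part~\ref{dictionary-kernel} of Proposition~\ref{dictionary} identifies (iii) with the conjunction of "$\overline{\overline{\rmim(h')}} = \Ker(h)$" and "$(G'/\Ker(h))^{\rmNoohi}\rarr G$ is an isomorphism", which together with $\Ker(h) = \overline{\overline{\rmim(\pipet(X_{\bs},\bx))}}$ is precisely near-exactness in the middle together with the asserted homeomorphism.

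Claims (i) and (ii) I would handle directly. For (i): the composite $X_{\bs}\rarr X\rarr S$ factors through $\bs = \Spec(\bk(\bs))$, so $H'\circ H$ is the pullback $\rmCov_S\rarr\rmCov_{\bs}\rarr\rmCov_{X_{\bs}}$; since $\rmCov_{\bs}\simeq\rmSets$ consists of disjoint unions of copies of $\bs$, this sends every object of $\rmCov_S$ to a disjoint union of copies of $X_{\bs}$, i.e. to a completely decomposed object, and part (2) of Proposition~\ref{dictionary} then gives $h'(G'')\subset\Ker(h)$. For (ii), by \ref{dictionary}\ref{denseimageequivalentconditions} it suffices to show that $X\times_S T$ is connected for every connected $T\in\rmCov_S$: the morphism $X\times_S T\rarr T$ inherits from $f$ the properties of being flat, proper, of finite presentation and of having connected non-empty geometric fibres, hence is open, closed and surjective with connected fibres; a decomposition $X\times_S T = A\sqcup B$ into non-empty opens would then have open images in $T$ that, because each fibre is connected and thus lies wholly in $A$ or in $B$, are disjoint, contradicting the connectedness of $T$.

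The core of the argument is (iii), and here I would invoke the infinite Stein factorization, Theorem~\ref{steinforgeomcov} (this is where the hypotheses "$S$ Nagata" and the conditions on $f$ are spent). Given a connected $Y\in\rmCov_X$ with a section $\sigma$ of $Y_{\bs} := Y\times_X X_{\bs}\rarr X_{\bs}$, Theorem~\ref{steinforgeomcov} furnishes a connected $T\in\rmCov_S$ and a morphism $g : Y\rarr T$ over $X\rarr S$ with geometrically connected fibres. Composing $Y\rarr X$ with $g$ produces a morphism $p : Y\rarr X\times_S T$ over $X$, which a routine valuative-criterion check (using that $X\times_S T\rarr X$ is separated) shows to be itself a geometric covering of $X\times_S T$, and $X\times_S T$ is connected by (ii). Since $Y$ is connected it corresponds to a transitive $\pipet(X\times_S T)$-set, so it is enough to exhibit one geometric point of $X\times_S T$ over which $Y$ has a single point. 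I would take a point $(\bx, t_0)$ with $t_0\in F_{\bs}(T)$: over $\bs$, the map $g_{\bs} : Y_{\bs}\rarr F_{\bs}(T)$ has connected non-empty fibres, hence realizes a bijection $\pi_0(Y_{\bs})\simeq F_{\bs}(T)$, and the fibre of $p$ over $(\bx, t_0)$ is the set of $\bx$-points of the connected component $Y_{\bs}^{t_0}$ of $Y_{\bs}$. Choosing $t_0$ to be the component containing $\sigma(X_{\bs})$ (which is connected, as $X_{\bs}$ is), the section $\sigma$ restricts to a section of $Y_{\bs}^{t_0}\rarr X_{\bs}$, so $Y_{\bs}^{t_0}$ is a connected object of $\rmCov_{X_{\bs}}$ admitting a section, hence isomorphic to the final object $X_{\bs}$; thus it has exactly one $\bx$-point. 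Therefore $p$ is an isomorphism and $Y\cong X\times_S T = h^{*}(T)$, which is (iii).

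The genuine obstacle is Theorem~\ref{steinforgeomcov} itself — constructing, for an arbitrary (possibly non-quasi-compact) connected geometric covering $Y$ of $X$, a geometric covering $T$ of $S$ together with $g : Y\rarr T$ having geometrically connected fibres, for which the naive Stein factorization $\underline{\Spec}_S(h_{*}\calO_Y)$ is not adequate. Everything above is comparatively formal once this input is granted; the one remaining point that needs care is the identification $Y\cong X\times_S T$ in (iii), where the hypothesis that $Y_{\bs}\rarr X_{\bs}$ has a section is used in an essential way (without it $Y$ need not descend to $S$, as the normal-surface example in the introduction shows) to force the covering $p$ to have degree one.
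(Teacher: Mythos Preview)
Your proof is correct and follows essentially the same route as the paper: both reduce the statement to the dictionary of Proposition~\ref{dictionary} and to the infinite Stein factorization (Theorem~\ref{steinforgeomcov}), and both conclude (iii) by showing that the induced map $Y\rarr X\times_S T$ is a morphism of connected geometric coverings with a one-point geometric fibre, hence an isomorphism. The only cosmetic differences are that you spell out (i) explicitly (the paper leaves it implicit) and that your argument for (ii) is the direct ``open, surjective, connected fibres'' argument, which the paper also records via Lemma~\ref{morphism-as-in-hes-over-connected-is-conn} alongside an alternative proof using $f_*\calO_X=\calO_S$.
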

We will usually omit the base points.

Although it is not a part of the statement, it is true that the group $\pipet(X_{\bs})$ does not depend on the underlying field of the geometric point of $\bs$. This is by properness of $f$ and \cite[Prop. 3.31.]{PartI}.
\begin{rmk}\label{remark-that-only-nearly-exact} An example that "nearly exact" is needed in the statement (i.e. we need thick closures): let $R$ be a complete dvr with algebraically closed residue field $k$. Denote by $K$ the field of fractions of $R$. Let $X$ be a normal scheme proper over $R$ such that $X_K$ is an elliptic curve and $X_k$ is a nodal curve. For example, take $R = \bbC[[t]]$ and $X = \{ZY^2 = W^3 + ZW^2 + Z^3t\} \subset \bbP_R^2$. It is a normal scheme. We then have $\pipet(X_k) = \bbZ$, $\pipet(X) = \piet(X) = \piet(X_k) = \widehat{\bbZ}$ and $\pipet(\Spec(R))=\piet(\Spec(R))=0$. We used \cite[Exp. X, Théorème 2.1]{SGA1} to identify $\piet(X) = \piet(X_k)$.
\end{rmk}

\begin{rmk}
If $G$ is a Noohi group and $H = \overline{H} \lhd G$ is its closed normal subgroup, then the quotient group $G/H$ still has the property that open subgroups form a basis of open neighbourhoods of the identity. This is because the image of some basis of neighbourhoods of the identity of a Hausdorff group via quotient map is a basis of neighbourhoods of $1$ in the quotient group (see \cite[\S3.2, Prop. 17]{BourbakiGT}). However, the quotient $G/H$ might still fail to be Ra{\u \i}kov complete and thus Noohi. The counterexample can be obtained by applying (the proof of) \cite[Prop. 11.1]{UniformStructures}) (which gives a way of producing many examples of Ra{\u \i}kov complete groups with non-complete quotients) to a non-complete abelian group whose open subgroups form a basis of neigbourhoods of $1$, e.g. $(\bbZ_{(p)},+)$ with $p$-adic topology.
The basis of open neighbourhoods of the constructed group is uncountable and so the group is non-metrizable. This is necessary, as quotients of metrizable complete groups remain complete, see \cite[\S IX.3.1, Prop. 4]{BourbakiGT}.
\end{rmk}

\begin{proof}(of near exactness on the right in Thm. \ref{homotopy-exact-general-base}, i.e. the image of $\pipet(X) \rarr \pipet(S)$ is dense): by Prop. \ref{dictionary}, it is enough to check that the pullback of a connected geometric covering of $S$ remains connected. This follows directly from Lemma \ref{morphism-as-in-hes-over-connected-is-conn} below. Let us give a slightly different proof. Geometrically connected and reduced fibres imply that $f_*(\calO_X)=\calO_S$ (see \cite[Exc. 28.1.H]{Vakil}). Let $u:U \rightarrow S$ be a connected geometric covering. As $X \rarr S$ is in particular quasi-compact and separated and $U \rarr S$ is flat, \cite[Thm. 24.2.8]{Vakil} applies (i.e. "cohomology commutes with flat base change"), and we get that in this situation $u^*f_*\shfF=f_{U*}u^*_X\shfF$ for a quasicoherent sheaf on $X$. Applying this to $\calO_X$ and using our assumption, we get $f_{U*}\calO_{X \times_S U}=\calO_U$. Now, if $X\times_S U$ were disconnected, $\calO_{X\times_SU}$ could be written as a product of two sheaves of algebras and the same would be true for $f_{U*}\calO_{X \times_S U}=\calO_U$, which would contradict connectedness of $U$.
\end{proof}

Let us remark that if $S$ is normal, we get actual surjectivity on the right, see Lm. \ref{open-on-pi1-X-S-normal}.
\begin{defn}
To make the statements shorter, we will call a morphism of schemes $f:X \rarr S$ a "\uline{morphism as in h.e.s.}" if $S$ is connected and $f$ is flat proper of finite presentation with geometrically connected and geometrically reduced fibres.
\end{defn}

Let us state the main result. Its proof and the preparatory lemmas will occupy the subsections \S \ref{section:preliminary} and \S\ref{section:proof}. It will imply the near exactness in the middle in Thm. \ref{homotopy-exact-general-base}.

\begin{theorem}\label{steinforgeomcov} Let $S$ be a Nagata scheme. Let $X \rarr S$ be as in h.e.s. and let $Y \in \rmCov_X$ be connected. Then there exists a connected $T \in \rmCov_S$ and a morphism $g:Y \rarr T$ over $X \rarr S$ such that $g$ has geometrically connected fibres. 

Moreover, for any two $T_1$, $T_2$ and maps $g_i: Y \rarr T_i$, $i=1,2$, as in the statement, there exists a unique isomorphism $\phi:T_1 \simeq T_2$ in $\rmCov_S$ making the diagram
\begin{center}
\begin{tikzpicture}
\matrix(a)[matrix of math nodes,
row sep=1em, column sep=4.5em,
text height=0.75ex, text depth=0.25ex]
{  &  T_1  \\ Y &   \\  & T_2 \\};

\path[->] (a-2-1) edge (a-1-2);
\path[->] (a-2-1) edge (a-3-2);
\path[->] (a-1-2) edge node[right]{$\phi$} (a-3-2);
\end{tikzpicture}
\end{center}
commute.
\end{theorem}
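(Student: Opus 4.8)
The plan is to follow the strategy outlined in the introduction: reduce to the case where $S$ is (the spectrum of) a strictly henselian local ring, solve it there by hand, then propagate the construction along a w-contractible pro-\'etale cover $\tS \rarr S$ and descend. \textbf{Step 1 (the strictly henselian case).} Suppose first $S = \Spec(A)$ with $A$ strictly henselian local, so $X \rarr S$ is proper with geometrically connected reduced fibres and $S$ has a unique closed point $\bs$. By Lemma \ref{proetale-of-normal}'s companion \cite[Lm. 7.3.8]{BhattScholze}, every object of $\rmCov_S$ is a disjoint union of finite \'etale $S$-schemes; since $A$ is strictly henselian these are just disjoint unions of copies of $S$, so $\rmCov_S \simeq \rmSets$. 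Hence the candidate for $T$ is forced: $T = \sqcup_{i \in I} S$ where $I = \pi_0(Y_{\bs})$, and one must produce a map $g : Y \rarr T$, i.e. a locally constant (equivalently, since $X$ may be non-noetherian after later base change, a suitable) decomposition $Y = \sqcup_{i \in I} Y_i$ with each $Y_i \rarr X$ having connected fibre over $\bs$. The key inputs here are: the "specialization" map $\pi_0(X_{\bs}) \leftarrow \pi_0(X)$ is a bijection (since $X \rarr S$ is proper with geometrically connected fibres and $S$ is henselian local, so $X$ is connected and more precisely $\pi_0(X) \to \pi_0(S)$ is a bijection on the nose), and the analogous statement applied to $Y$: each connected component of $Y$ hits a well-defined connected component of $X_{\bs}$, and conversely. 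Concretely I would show $Y_i := $ (union of those connected components of $Y$ whose closed fibre lands in the $i$-th component of $X_{\bs}$) is open and closed in $Y$, is itself a geometric covering of $X$, and $Y_i \rarr S$ has the property that $(Y_i)_{\bs}$ is connected over $X_{\bs}$; this uses properness of $X \rarr S$ to control how components of $Y$ meet the special fibre. Uniqueness is immediate from $\rmCov_S \simeq \rmSets$ and the connected-fibre requirement, which pins down the decomposition.

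\textbf{Step 2 (w-strictly local / w-contractible base).} Next take $\tS \rarr S$ a w-contractible pro-\'etale cover (which exists by the facts recalled from \cite[\S2]{BhattScholze}); its connected components are spectra of strictly henselian local rings, but $\pi_0(\tS)$ is some extremally disconnected profinite space. Base-changing $X$ to $\tX = X \times_S \tS$ and $Y$ to $\tY = Y \times_X \tX$, I want to run Step 1 "fibrewise over $\pi_0(\tS)$". The point is that the decomposition of $\tY$ into connected-fibre pieces over each connected component of $\tS$ must be organized into a single object of $\rmCov_{\tS}$. Here one uses Remark \ref{classical-remark}: geometric coverings of $\tS$ correspond to sheaves on the profinite space $\pi_0(\tS)$, i.e. to continuous maps from $\pi_0(\tS)$ to sets, and since $\pi_0(\tS)$ is extremally disconnected (profinite), one can glue the componentwise data. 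The object to produce is $\tT \in \rmCov_{\tS}$ together with $\tg : \tY \rarr \tT$ over $\tX \rarr \tS$ with geometrically connected fibres; the set $\pi_0$ of the fibre of $\tT$ over a point of $\tS$ lying over $\bar t \in \pi_0(\tS)$ is $\pi_0(\tY_{\bar t})$, and the Nagata hypothesis plus properness ensure this is locally constant in $\bar t$ (this is where one invokes a finiteness/constructibility statement for $\pi_0$ of fibres, e.g. via the Stein factorization of a quasi-compact piece or via \cite[Exc. 28.1.H]{Vakil} applied after a suitable reduction). The upshot is existence and uniqueness of $(\tT, \tg)$ over $\tS$, by combining Step 1 on each connected component with the gluing afforded by extremal disconnectedness.

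\textbf{Step 3 (descent along $\tS \rarr S$).} Now $\tS \rarr S$ is a pro-\'etale cover; it is faithfully flat and one can arrange it to be a limit of \'etale covers, but it is typically not of finite presentation, so Fact \ref{properdescent} does not apply directly. Instead I would descend using the equivalence $\rmCov_S \simeq \rmLoc_{S_{\proet}}$ together with the fact that $\rmShv(S_{\proet})$ satisfies descent for the pro-\'etale cover $\tS \rarr S$ (this is the raison d'\^etre of the pro-\'etale topology). Concretely, the pair $(\tT, \tg)$ is, by the uniqueness clause of Step 2 applied over $\tS \times_S \tS$, canonically equipped with a descent datum: pulling back along the two projections $\tS \times_S \tS \rightrightarrows \tS$ gives two solutions of the universal problem for $Y \times_S (\tS\times_S\tS) \to X\times_S(\tS\times_S\tS)$, hence a canonical isomorphism between them satisfying the cocycle condition (cocycle follows from uniqueness over $\tS \times_S \tS \times_S \tS$). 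By pro-\'etale descent this descends $\tT$ to an object $T \in \rmCov_S$ and $\tg$ to $g : Y \rarr T$ over $X \rarr S$; the properties "$T$ connected" and "$g$ has geometrically connected fibres" can be checked after the faithfully flat base change $\tS \rarr S$, where they hold by construction (connectedness of $T$ uses that $Y$ is connected and that $g$ has connected fibres, so $\pi_0(T)$ is a quotient of $\pi_0(Y) = *$, together with near-exactness on the right already proved, i.e. that $\pi_0$ of things is preserved appropriately). Finally, the uniqueness of $(T,g)$ over $S$: given $(T_1,g_1)$ and $(T_2,g_2)$, base-change to $\tS$, invoke the uniqueness of Step 2 to get a unique $\tilde\phi : \tT_1 \simeq \tT_2$ compatible with the $\tg_i$, check it is compatible with the descent data (again by uniqueness over $\tS\times_S\tS$), and descend it to the desired $\phi : T_1 \simeq T_2$ in $\rmCov_S$ with $g_2 = \phi \circ g_1$; uniqueness of $\phi$ follows from faithful flatness of $\tS \rarr S$ and uniqueness of $\tilde\phi$.

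\textbf{Main obstacle.} The hard part is Step 2, and within it the claim that $\bar t \mapsto \pi_0(\tY_{\bar t})$ varies "locally constantly" over the profinite space $\pi_0(\tS)$ in a way that assembles into a genuine geometric covering of $\tS$ — equivalently, that the fibrewise decompositions of $\tY$ from Step 1 are compatible as $\bar t$ varies and do not, e.g., have jumping cardinality. Controlling this requires the Nagata hypothesis on $S$ (to get finiteness of normalizations and hence some constructibility of $\pi_0$ of fibres), careful use of properness of $X \rarr S$ (to relate $\pi_0(\tY_{\bar t})$ to $\pi_0$ of a generic nearby fibre via specialization), and the extremal disconnectedness of $\pi_0(\tS)$ to perform the gluing; making the transition from "pointwise over each strictly henselian component" to "a single sheaf on $\pi_0(\tS)$" rigorous is the technical heart of the argument. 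The descent in Step 3 is comparatively formal once one has set up the universal property correctly so that descent data come for free from uniqueness.
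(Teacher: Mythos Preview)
Your overall plan --- solve over strictly henselian bases, assemble over a large pro-\'etale cover, then descend --- matches the paper's strategy, but two concrete points diverge from what actually makes the argument go through.

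First, the role of the Nagata hypothesis is misplaced. In the paper it is used \emph{only} to reduce existence to the case $S$ normal: the normalization $S^\nu \to S$ is finite, hence a morphism of effective descent for geometric coverings (Fact \ref{properdescent}), and one constructs $T$ over $S^\nu$ and descends using the uniqueness clause (which is proven first, directly over any base, via the equivalence $\rmCov_S \simeq \pipet(S,\bs)\text{-}\rmSets$ and the dense image of $\pipet(X)\to\pipet(S)$). Once $S$ is normal, the mechanism for showing that the fibrewise decompositions of $\tY$ assemble into a clopen decomposition is \emph{not} a constructibility/finiteness argument via Nagata or a gluing via extremal disconnectedness, but a passage to the generic point $\eta$: one shows $\pi_0(\tS_\eta)\to\pi_0(\tS)$ and $\pi_0(\tY_\eta)\to\pi_0(\tY)$ are homeomorphisms (this uses normality, cf.\ Lemma \ref{conntected-comps-of-w-strictly-local}), and then works entirely over the field $\kappa(\eta)$, where an approximation argument along a presentation $\tS_\eta=\lim S_\lambda$ together with Lemma \ref{finitegeomconncomp} produces a finite \'etale level at which the components become geometrically connected. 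Accordingly the paper uses a w-strictly local cover; extremal disconnectedness of $\pi_0(\tS)$ plays no role.

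Second, there is a gap in Step 3. You invoke ``the uniqueness clause of Step 2 over $\tS\times_S\tS$'' to manufacture the descent datum, but $\tS\times_S\tS$ is neither w-contractible nor locally topologically noetherian, so neither your Step 2 nor the $\rmCov$/$\pipet$ formalism applies there. The paper flags exactly this issue and does \emph{not} appeal to abstract uniqueness at this stage; instead it argues by hand: both $p^*\tT$ and $q^*\tT$ are explicitly $\sqcup_t(\tS\times_S\tS)$, the two maps from $\tY\times_Y\tY$ induce homeomorphisms on $\pi_0$ (via Lemma \ref{morphism-as-in-hes-over-connected-is-conn}), and the resulting bijection on $\pi_0$ lifts uniquely to an isomorphism of schemes over $\tS\times_S\tS$ by the elementary combinatorial Lemma \ref{lifting-automorphism-of-pi0}. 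The cocycle condition over the triple product is verified the same way. Your proposed route through universal properties would need an independent argument to make sense over these non-noetherian fibre products.
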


\begin{defn}
In the situation of Thm. \ref{steinforgeomcov}, we will refer to the scheme $T \in \rmCov_S$ as the \emph{infinite Stein factorization} of $Y$.
\end{defn}
In the case when $Y \in \rmCov_X$ is a finite \'etale cover, the "infinite Stein factorization" coincides with the usual Stein factorization of the map $Y \rarr S$. See \cite[Lemma 0BUN]{StacksProject} or \cite[Exp. X, Proposition 1.2]{SGA1}.

\begin{proposition}
Assume that Theorem \ref{steinforgeomcov} holds. Then Theorem \ref{homotopy-exact-general-base} holds.
\end{proposition}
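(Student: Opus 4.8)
The plan is to deduce Theorem \ref{homotopy-exact-general-base} from Theorem \ref{steinforgeomcov} by feeding the infinite Stein factorization into the dictionary of Proposition \ref{dictionary}, applied to $(G'',G',G)=(\pipet(X_{\bs}),\pipet(X),\pipet(S))$, under which the functors $H,H'$ are the base-change functors $(-)\times_S X:\rmCov_S\to\rmCov_X$ and $(-)\times_X X_{\bs}:\rmCov_X\to\rmCov_{X_{\bs}}$, using the equivalences $\rmCov_{(-)}\simeq\pipet(-)-\rmSets$. Near-exactness on the right (density of $h:\pipet(X)\to\pipet(S)$) is already proved above, so three things remain: that $\rmim(h')\subseteq\ker(h)$, that $\overline{\overline{\rmim(h')}}=\ker(h)$, and that $\big(\pipet(X)/\ker(h)\big)^{\rmNoohi}\to\pipet(S)$ is a homeomorphism.

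For $\rmim(h')\subseteq\ker(h)$ I would invoke part (2) of Proposition \ref{dictionary}: it suffices to check that $H'\circ H$ sends every object to a completely decomposed one. For $T\in\rmCov_S$ one has $H'(H(T))=(T\times_S X)\times_X X_{\bs}=T\times_S X_{\bs}$, and base changing over the algebraically closed field $\kappa(\bs)$ exhibits this as $(T\times_S\Spec\kappa(\bs))\times_{\Spec\kappa(\bs)}X_{\bs}$; since $T\times_S\Spec\kappa(\bs)\in\rmCov_{\Spec\kappa(\bs)}$ is a disjoint union of copies of $\Spec\kappa(\bs)$, this is a disjoint union of copies of $X_{\bs}$, and since the fibres of $f$ are connected, $X_{\bs}$ is the final object of $\rmCov_{X_{\bs}}$; so the object is completely decomposed.

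The remaining two assertions I would extract simultaneously from part \ref{dictionary-kernel} of Proposition \ref{dictionary}, whose two hypotheses ($h'(G'')\subseteq\ker h$, and $h$ dense) are now in hand; so it is enough to verify its condition (b): that every connected $Y\in\rmCov_X$ for which $Y\times_X X_{\bs}\to X_{\bs}$ admits a section lies in the essential image of $(-)\times_S X$. Given such $Y$ and a section $\sigma:X_{\bs}\to Y\times_X X_{\bs}$, the key move is to take the infinite Stein factorization $(T, g:Y\to T)$ of Theorem \ref{steinforgeomcov} — so $T\in\rmCov_S$ is connected, $g$ lies over $X\to S$, and $g$ has geometrically connected fibres — and to show that the induced morphism $\psi=(g,f):Y\to T\times_S X$ over $X$ is an isomorphism; note $T\times_S X$ is connected by near-exactness on the right applied to $T$. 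I would prove $\psi$ is an isomorphism by passing to fibre functors at a geometric point $\bx$ on $X_{\bs}$: $F_{\bx}(\psi)$ is a $\pipet(X)$-equivariant map between the transitive $\pipet(X)$-sets $F_{\bx}(Y)$ and $F_{\bx}(T\times_S X)$, hence surjective, and a surjection of transitive $\pipet(X)$-sets is a bijection as soon as it has one singleton fibre; since $F_{\bx}$ reflects isomorphisms, this forces $\psi$ to be an isomorphism. Putting $y_0:=\sigma(\bx)$ and $\bt_0:=g(y_0)$ (a geometric point of $T$ over $\bs$), and identifying $F_{\bx}(T\times_S X)$ with the set of geometric points of $T$ over $\bs$ so that $F_{\bx}(\psi)$ becomes $y\mapsto g(y)$, the fibre of $F_{\bx}(\psi)$ over $\bt_0$ is exactly the set of geometric points over $\bx$ of the scheme-theoretic fibre $Y_{\bt_0}:=g^{-1}(\bt_0)$; and $Y_{\bt_0}$ is a clopen subscheme of $Y\times_X X_{\bs}$ (the fibre of $g$ over $\bs$ is discrete), hence a geometric covering of $X_{\bs}$, connected because $g$ has geometrically connected fibres, and containing $\sigma(X_{\bs})$ (which is clopen in it, being the image of a section of a separated étale morphism) since $y_0\in Y_{\bt_0}$; connectedness then forces $Y_{\bt_0}=\sigma(X_{\bs})\cong X_{\bs}$, so that fibre is a single point, and we are done with (b).

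Granting (b), part \ref{dictionary-kernel} of Proposition \ref{dictionary} yields $\overline{\overline{\rmim(\pipet(X_{\bs}))}}=\ker(\pipet(X)\to\pipet(S))$ together with the fact that $\big(\pipet(X)/\ker(\pipet(X)\to\pipet(S))\big)^{\rmNoohi}\to\pipet(S)$ is an isomorphism of topological groups, i.e. a homeomorphism; since the kernel equals the thick closure of the image this is precisely the ``moreover'' in Theorem \ref{homotopy-exact-general-base}, and together with density of $h$ and triviality of the composition it gives the theorem. The only genuinely substantive ingredient here is Theorem \ref{steinforgeomcov} itself (which is being assumed); within this reduction the point requiring the most care is the identification of the fibre of $F_{\bx}(\psi)$ over $\bt_0$ with $F_{\bx}(Y_{\bt_0})$ and the observation that the geometric-connectedness of the fibres of $g$ makes $Y_{\bt_0}$ a \emph{connected} geometric covering of $X_{\bs}$ — it is exactly here that the section hypothesis in condition (b) and the geometric-connectedness in Theorem \ref{steinforgeomcov} interact to force triviality.
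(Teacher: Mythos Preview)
Your argument is correct and follows essentially the same route as the paper: take the infinite Stein factorization $T$ of $Y$, show that the induced map $Y\to T\times_S X$ is an isomorphism by exhibiting a singleton fibre (using that the section $\sigma$ lands in a single geometrically connected fibre of $g$), and then invoke Proposition~\ref{dictionary}\ref{dictionary-kernel}. Your write-up is somewhat more explicit than the paper's---you spell out the verification of $\rmim(h')\subset\ker(h)$ via part~(2) of the dictionary and phrase the singleton-fibre step in terms of the fibre functor $F_{\bx}$ rather than geometrically---but the underlying strategy is identical.
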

\begin{proof}
We have already checked above that the image of the map $\pipet(X_{\bs}) \rarr \pipet(X)$ is dense. We have to prove the remaining statements. 

Let $Y \in \rmCov_X$ be connected and such that $Y_{\bs}$ has a section $\sigma: X_{\bs} \rarr Y_{\bs}$. Let $T \in \rmCov_S$ be the "infinite Stein factorization" of $Y$ over $S$ constructed in Thm. \ref{steinforgeomcov}. The section $\sigma$ gives that $Y_{\bs}$ contains a copy $X_{\bs}'$ of $X_{\bs}$ as a connected clopen subset. Observe that $T_{\bs} \simeq \sqcup_t \bs$, and so $T_{X_{\bs}}\simeq \sqcup_t X_{\bs}$. As $Y_{\bs} \rarr T_{\bs}$ has connected fibres, one easily checks that that $X_{\bs}'$ equals one of the fibres and the restriction of $Y_{\bs} \rarr T_{X_{\bs}}$ to $X_{\bs}'$ is an isomorphism. Thus, one of the geometric fibres of $Y_{\bs} \rarr T_{X_{\bs}}$ is a singleton and so the same holds for $Y \rarr T_X$. As $Y$ and $T_X$ are connected geometric coverings of $X$, we conclude that $Y \rarr T_X$ is an isomorphism. By Prop. \ref{dictionary}, the proof is finished.
\end{proof}

Let us also recall some facts about Nagata schemes. Firstly, Nagata schemes are locally Noetherian (by definition).
\begin{fact}(\cite[Lemma 035S]{StacksProject})
Let $X$ be a Nagata scheme. Then the normalization $\nu : X^\nu \rarr X$ is a finite morphism.
\end{fact}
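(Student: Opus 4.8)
The plan is to reduce, by two standard dévissages, to the defining property of Nagata rings applied to the irreducible components. Finiteness of a morphism may be checked over the members of an affine open cover of the target, and the formation of the normalization $\nu : X^\nu \rarr X$ commutes with restriction to opens; since a Nagata scheme is locally Noetherian and by definition admits a cover by spectra of Nagata rings, I would immediately reduce to showing that for a Nagata ring $A$ the normalization of $\Spec(A)$ is finite over $\Spec(A)$. Next I would remove nilpotents: the normalization of $X$ coincides with that of $X_{\rmred}$ (the two schemes have the same generic points with the same residue fields there, and the normalization depends only on this data), and $\nu$ factors as $X^\nu \rarr \Spec(A_{\rmred}) \rarr \Spec(A)$. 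As $A_{\rmred} = A/\mathrm{nil}(A)$ is again Nagata and the closed immersion $\Spec(A_{\rmred}) \rarr \Spec(A)$ is finite, it suffices to prove finiteness of $X^\nu \rarr \Spec(A_{\rmred})$; thus I may assume $A$ is a reduced Noetherian ring.

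For such $A$ there are finitely many minimal primes $\frakp_1, \dots, \frakp_n$, the total ring of fractions is $Q(A) = \prod_{i=1}^n K_i$ with $K_i = \rmFrac(A/\frakp_i)$, and I would identify the normalization explicitly as $\widetilde{A} = \prod_{i=1}^n B_i$, where $B_i$ is the integral closure of the domain $A/\frakp_i$ in $K_i$, so that $X^\nu = \coprod_i \Spec(B_i)$. The decomposition is verified directly: projecting a monic integral equation shows that any element of $Q(A)$ integral over $A$ has each component in $B_i$, while conversely, given $x_i \in B_i$ satisfying a monic $f_i \in A[t]$, the product $\prod_i f_i$ is a monic equation over $A$ annihilating $(x_i)_i$.

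It remains to invoke the hypothesis. By definition a Nagata ring is Noetherian and each of its quotients $A/\frakp$ by a prime ideal is an N-2 (Japanese) domain, in particular N-1, so each $A/\frakp_i$ has module-finite integral closure $B_i$ in its fraction field $K_i$. Since $A/\frakp_i$ is a cyclic, hence finite, $A$-module, $B_i$ is finite over $A$, and therefore $\widetilde{A} = \prod_i B_i$ is finite over $A$. This gives finiteness of $\nu$ and finishes the proof. The genuinely substantive input is this N-1 property, which is built into the definition of Nagata; accordingly the only steps needing real care are the two compatibilities of normalization used above (with restriction to affine opens and with passage to $X_{\rmred}$) and the product decomposition over minimal primes, and I expect no serious obstacle beyond correctly packaging these reductions.
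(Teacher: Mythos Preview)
Your argument is correct and follows the standard route: localize to affines, pass to the reduction, decompose over the finitely many minimal primes, and invoke the N-1 property built into the definition of Nagata. Note, however, that the paper does not actually prove this statement; it is recorded as a \emph{Fact} with a bare citation to \cite[Lemma 035S]{StacksProject}, so there is no in-paper proof to compare against. Your write-up is essentially the proof given at that Stacks Project tag, so in that sense it matches the cited reference.
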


\begin{fact} The spectra of the following rings are Nagata schemes: fields, Noetherian complete local rings, Dedekind rings of characteristic zero.
Moreover, any scheme locally of finite type over a Nagata scheme is Nagata.
\end{fact}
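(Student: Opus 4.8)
The plan is to handle the three listed classes of rings one at a time and then deduce the scheme-level statement from the stability of the Nagata property under finite-type extensions. All four ingredients are classical commutative algebra, and in a final write-up I would cite \cite{StacksProject} rather than reprove them; here I indicate the structure. Recall that a Noetherian ring $R$ is Nagata exactly when $R/\frakp$ is Japanese (i.e. N-2) for every prime $\frakp$, where a Noetherian domain is N-2 if its integral closure in every finite extension of its fraction field is a finite module.

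First I would dispose of fields and Dedekind rings of characteristic zero, as these are elementary. A field $k$ has unique prime $(0)$, and every finite extension $L/k$ is already integrally closed and module-finite over $k$, so $k$ is N-2 and $\Spec(k)$ is Nagata. For a Dedekind ring $R$ of characteristic zero the primes are $(0)$ together with the maximal ideals; the residue fields $R/\frakm$ are fields, hence N-2 by the previous case, so it suffices to check that $R$ itself is N-2. Writing $K = \rmFrac(R)$, every finite extension $L/K$ is separable because $\mathrm{char}\,K = 0$, and the classical finiteness of integral closure in a finite separable extension (via the nondegeneracy of the trace form) shows that the integral closure of $R$ in $L$ is a finite $R$-module. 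Hence $R$ is N-2 and $\Spec(R)$ is Nagata.

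The Noetherian complete local case is the first genuinely deep input. Here I would argue via Cohen's structure theorem, which presents a complete Noetherian local ring as a quotient of a formal power series ring $k[[x_1,\dots,x_n]]$ over a field in the equicharacteristic case, or of $\Lambda[[x_1,\dots,x_n]]$ for a complete discrete valuation ring $\Lambda$ in the mixed-characteristic case. These power series rings are Nagata, and since the Nagata property descends to quotients, it follows that every complete Noetherian local ring is Nagata.

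It remains to prove the stability statement, for which the essential input at the level of rings is Nagata's theorem that a finite-type algebra over a Nagata ring is again a Nagata ring. Granting this, the scheme statement follows by a routine gluing: cover the Nagata base by affine opens $\Spec(R_i)$ with each $R_i$ Nagata, and cover the preimage of each in $X$ by affine opens $\Spec(S_{ij})$ with $S_{ij}$ of finite type over $R_i$; then each $S_{ij}$ is Nagata by Nagata's theorem, so $X$ is covered by spectra of Nagata rings and is therefore Nagata. The main obstacles are exactly the two deep classical results invoked above — that complete Noetherian local rings are Nagata, and that the Nagata property is inherited by finite-type algebras — whose proofs are substantial and which I would import from the literature rather than reconstruct.
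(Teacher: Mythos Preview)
Your proposal is correct and in fact more detailed than the paper's own treatment, which simply cites \cite[Tag 035A]{StacksProject} and \cite[Tag 035B]{StacksProject} without further comment. Since you ultimately defer the two substantive inputs (complete local rings are Nagata; Nagata is stable under finite type) to the literature as well, your approach is essentially the same as the paper's, just with the underlying structure of those references made explicit.
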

\begin{proof}
See \cite[Tag 035A]{StacksProject} and \cite[Tag 035B]{StacksProject}.
\end{proof}

\subsubsection*{Example in the case $S$ is normal}
As an example, let us apply the stated homotopy exact sequence in the case $S$ - normal. The direct analogue of the following result holds for the \'etale fundamental groups and can be checked using the usual homotopy exact sequence and diagram chasing. The point of the following proof is to show that we can redo this proof even if we have only near exactness. 
\begin{corollary}\label{application-of-hes-when-S-normal}
Let $f:X \rarr S$ be as in h.e.s. Assume $S$ to be normal and locally noetherian. Let $\xi$ be its generic point. Then the induced morphism
\begin{displaymath}
\alpha:\pipet(X_\xi) \rarr \pipet(X)
\end{displaymath}
has dense image.
\end{corollary}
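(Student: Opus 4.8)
The plan is to transcribe the classical \'etale diagram chase, but to read it through Proposition~\ref{dictionary} so that only \emph{near}-exactness is needed. By Proposition~\ref{dictionary}\ref{denseimageequivalentconditions}, $\alpha$ has dense image iff the base-change functor $\rmCov_X\to\rmCov_{X_\xi}$ preserves connectedness, so I would fix a connected $Y\in\rmCov_X$ and aim to show that $Y_\xi:=Y\times_XX_\xi$ is connected. Since $S$ is normal, connected and locally noetherian it is integral with unique generic point $\xi$ and function field $K:=\kappa(\xi)$; I would fix a geometric point $\bxi$ over $\xi$, a geometric point $\bx$ of the geometric generic fibre $X_{\bxi}$ lying over $\bxi$, and use $\bx$ as base point throughout. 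The relevant preliminary remarks are: $\bxi$ is also a geometric point of $S$, with (connected, reduced) geometric fibre $X_{\bxi}$; the morphism $X_\xi\to\Spec K$ is again a morphism as in h.e.s., with the same geometric fibre $X_{\bxi}$; and $X_\xi$ is geometrically connected and of finite type over $K$.

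First I would apply Theorem~\ref{homotopy-exact-general-base} both to $X\to S$ and to $X_\xi\to\Spec K$ (the latter base being a field, hence Nagata), obtaining the two near-exact sequences
\[
\pipet(X_{\bxi})\xrightarrow{a}\pipet(X_\xi)\xrightarrow{b}\pipet(\Spec K)\to 1,
\qquad
\pipet(X_{\bxi})\xrightarrow{a'}\pipet(X)\xrightarrow{b'}\pipet(S)\to 1,
\]
which fit into a commutative diagram with vertical maps $\id_{\pipet(X_{\bxi})}$, $\alpha$, and $c\colon\pipet(\Spec K)\to\pipet(S)$ (the latter induced by $\xi\hookrightarrow S$), so that $b'\circ\alpha=c\circ b$. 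Here $\pipet(\Spec K)=\Gal_K$, and $b$ is open and surjective by Fact~\ref{arithtogeom}. I would also check that $c$ is surjective: $\pipet(S)=\piet(S)$ by Lemma~\ref{proetale-of-normal}; any connected $W\in\rmCov_S$ is \'etale over the normal scheme $S$, hence normal, hence --- being connected and locally noetherian --- integral, so its fibre over $\xi$ is irreducible, hence connected; thus base change along $\xi\to S$ preserves connectedness, so $c$ has dense image by Proposition~\ref{dictionary}\ref{denseimageequivalentconditions}, and its image is closed because $\Gal_K$ is compact and $\pipet(S)$ is Hausdorff.

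The heart of the argument is then a chase at the level of $\pi_0(Y_{\bxi})$, the set of connected components of the geometric generic fibre $Y_{\bxi}:=Y\times_XX_{\bxi}$, which I identify with the set of $\rmim(a')$-orbits on the transitive $\pipet(X)$-set $F:=F_{\bx}(Y)$. The $\pipet(X)$-action on the discrete set $\pi_0(Y_{\bxi})$ has open point-stabilizers, so its kernel is an intersection of open subgroups, hence thickly closed; as it contains $\rmim(a')$ it contains $\overline{\overline{\rmim(a')}}=\ker(b')$ by near-exactness in the middle (Theorem~\ref{homotopy-exact-general-base}). Hence the action factors through $\pipet(X)/\ker(b')$, and therefore --- using the ``moreover'' clause of Theorem~\ref{homotopy-exact-general-base}, which identifies the Noohi completion of $\pipet(X)/\ker(b')$ with $\pipet(S)$, together with the equivalence $G-\rmSets\simeq G^{\rmNoohi}-\rmSets$ --- $\pi_0(Y_{\bxi})$ becomes a $\pipet(S)$-set; since $\pipet(X)$ acts transitively on $F$, hence on $\pi_0(Y_{\bxi})$, $\pipet(S)$ acts transitively on $\pi_0(Y_{\bxi})$, and so therefore does $\Gal_K$ via the surjection $c$. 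Running the same reasoning on the first sequence (with $b$ surjective and $\overline{\overline{\rmim(a)}}=\ker(b)$) shows the $\pipet(X_\xi)$-action on $\pi_0(Y_{\bxi})$ factors through $\pipet(X_\xi)/\ker(b)\cong\Gal_K$, and the identity $b'\circ\alpha=c\circ b$ identifies this $\Gal_K$-action with the transitive one just produced; hence $\pipet(X_\xi)$ is transitive on $\pi_0(Y_{\bxi})$. Since $\ker(b)$ (containing $\rmim(a)$) is itself transitive on each of its orbits on $F$, i.e.\ on each element of $\pi_0(Y_{\bxi})$, it follows that $\pipet(X_\xi)$ is transitive on $F=F_{\bx}(Y_\xi)$, i.e.\ $Y_\xi$ is connected, as required.

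I expect the main obstacle to be exactly the bookkeeping of the last paragraph: because the sequences are only nearly exact one cannot literally lift elements through $a'$ as in the \'etale chase, and the substitute is to verify that the action on the components of the geometric generic fibre genuinely descends through $\pipet(S)$ --- which is precisely where near-exactness in the middle, hence (through Theorem~\ref{homotopy-exact-general-base}) the infinite Stein factorization and the Nagata hypothesis, enters. As a cross-check one can also argue directly via Theorem~\ref{steinforgeomcov}: for connected $Y$ the canonical map $Y\to T\times_SX$ is an isomorphism (both sides are connected geometric coverings of $X$ and the map has geometrically connected fibres), and since $T$ is finite \'etale and integral over the normal scheme $S$ one gets $T_\xi=\Spec L$ with $L/K$ a finite separable field extension, whence $Y_\xi\cong X_\xi\times_KL$ is connected because $X_\xi$ is geometrically connected over $K$.
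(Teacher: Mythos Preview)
Your main argument parallels the paper's: build the commutative diagram of two near-exact sequences, establish surjectivity of $\Gal_K\to\pipet(S)$, and chase. The paper does the chase element-wise (hit a given coset $gU$ first modulo $\ker(b')$ via $\rmim(\alpha)$, then correct by something in $\rmim(a')$), while you phrase it via actions on $\pi_0(Y_{\bxi})$; the content is the same. One step you assert without comment is that $\pipet(X)$ acts on $\pi_0(Y_{\bxi})$, i.e.\ that the $\rmim(a')$-orbits on $F$ are permuted by $\pipet(X)$. This is \emph{not} a formal consequence of $\overline{\overline{\rmim(a')}}=\ker(b')$ alone; what makes it true here is that Theorem~\ref{steinforgeomcov} identifies $\pi_0(Y_{\bxi})$ with the fibre $T_{\bxi}$ of the infinite Stein factorization $T\in\rmCov_S$, which is then a $\pipet(X)$-set via $b'$. (The paper's corresponding step, the equality $\ker(b')\cdot gU=\rmim(a')\cdot gU$, rests on the same fact.) You should also note, as the paper does in Remark~\ref{rmk-about-Nagata-assumption}, that invoking Theorem~\ref{homotopy-exact-general-base} for $X\to S$ formally requires $S$ Nagata; for $S$ normal this is harmless since the proof only uses finiteness of normalization.

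Your cross-check contains a genuine error: $Y\to T\times_SX$ is \emph{not} an isomorphism in general. That $Y\to T$ has geometrically connected fibres says nothing about the fibres of $Y\to T_X$, which are discrete and typically infinite. For instance, with $S=\Spec k$ ($k=\bar k$), $X$ the nodal cubic and $Y$ the infinite chain of $\bbP^1$'s, one gets $T=\Spec k$ and $T_X=X$, so $Y\to T_X$ is the universal cover, not an isomorphism; correspondingly $Y_\xi\cong X_\xi\times_K L$ fails. The paper's alternative argument (the Remark following the Corollary) instead works with $Y_\xi\to T_\xi$: since $T$ is connected and $\pipet(S)=\piet(S)$, $T$ is finite \'etale over $S$; then $T_\xi$ is connected (a single point $\Spec L$), $Y_\xi\to T_\xi$ is open and surjective with geometrically connected fibres, and hence $Y_\xi$ is connected.
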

\begin{proof}
Let $\bxi$ be a geometric point over $\xi$. Denote $K=\kappa(\xi)$. Applying Thm. \ref{homotopy-exact-general-base} (see also Rmk. \ref{rmk-about-Nagata-assumption}), we have the following diagram
\begin{center}
\begin{tikzpicture}
\matrix(a)[matrix of math nodes,
row sep=1.5em, column sep=2em,
text height=2ex, text depth=0.25ex]
{\pipet(X_{\bxi}) & \pipet(X_\xi) &  \Gal_K & 1\\  \pipet(X_{\bxi}) & \pipet(X) &  \pipet(S) & 1\\};

\path[->] (a-1-1) edge (a-1-2);
\path[->] (a-1-2) edge (a-1-3);
\path[->] (a-1-3) edge (a-1-4);

\draw[double distance = 1.5pt](a-1-1) -- (a-2-1);
\path[->] (a-1-2) edge node[left]{$\alpha$} (a-2-2);
\path[->] (a-1-3) edge (a-2-3);

\path[->] (a-2-1) edge (a-2-2);
\path[->] (a-2-2) edge node[above]{$\pipet(f)$} (a-2-3);
\path[->] (a-2-3) edge (a-2-4);
\end{tikzpicture}
\end{center}
with nearly exact rows. As $S$ is normal, we have $\pipet(S) = \piet(S)$ and the map $\Gal_K \rarr \piet(S)$ is surjective by \cite[Prop. 0BQM]{StacksProject}.

Let $U \subset \pipet(X)$ be an open subgroup and let $g \in \pipet(X)$. We need to show, that $\rmim(\alpha)\cap gU \neq \emptyset$. By Thm. \ref{homotopy-exact-general-base}, $\big(\pipet(X)/\ker(\pipet(f)) \big)^{\rmNoohi}  \simeq 
\pipet(S)$ and thus there is $h \in \rmim(\alpha)\cap (\ker(\pipet(f)) \cdot gU)$. It can be also seen more directly, using Lm. \ref{open-on-pi1-X-S-normal}: it implies that the morphism $\pipet(f)$ is open. It is also surjective and thus it is a quotient map and $\pipet(X)/\ker(\pipet(f)) \simeq \pipet(S)$ as topological groups (thus in fact we do not need to pass to Noohi completions in this case). So, from the diagram, we know that $\alpha$ is surjective modulo $\ker(\pipet(f))$. But from the near exactness of $\pipet(X_{\bxi}) \rarr \pipet(X) \rarr \pipet(S)$, we have that $\ker(\pipet(f)) \cdot gU = \rmim(\pipet(X_{\bxi}))\cdot gU$ and so $h \in \rmim(\pipet(X_{\bxi}))\cdot gU$ implies that $h = xgu$ with $x \in \rmim(\pipet(X_{\bxi}))$, $u \in U$ and so $x^{-1}h \in \rmim(\alpha)\cap gU$.
\end{proof}

\begin{rmk}
Using Thm. \ref{steinforgeomcov} directly, one can give a short alternative proof of the above Corollary. Indeed, let $Y \in \rmCov_X$ be connected. Let $T \in \rmCov_S$ be the scheme obtained by applying Thm. \ref{steinforgeomcov} (see also Rmk. \ref{rmk-about-Nagata-assumption}). It is connected and thus finite (as $\pipet(S) = \piet(S)$). $T_\xi$ is connected (by the surjectivity of $\Gal_K \rarr \piet(S)$). The morphism $Y \rarr T$ (and so also $Y_\xi \rarr T_\xi$) has geometrically connected fibres. As $Y_\xi \rarr T_\xi$ is open and surjective, this implies that $Y_\xi$ is connected, as desired.
\end{rmk}

\begin{rmk}\label{rmk-about-Nagata-assumption}
Formally, in the proof of Cor. \ref{application-of-hes-when-S-normal} we should assume $S$ to be normal \underline{and Nagata} to apply Thm. \ref{homotopy-exact-general-base} or Thm. \ref{steinforgeomcov}, but examining the proofs we see that we only use that a Nagata scheme is locally noetherian and its normalization $S^\nu \rarr S$ is finite.
\end{rmk}

The following lemma is independent, i.e. we do not assume Thm. \ref{homotopy-exact-general-base} in the proof.

\begin{lemma}\label{open-on-pi1-X-S-normal}
Let $S$ be a locally noetherian normal domain and $\xi$ its generic point. Let $f:X \rarr S$ be a quasi-separated morphism  of finite type. Assume $X$ is connected and the fibre $X_{\xi}$ is geometrically connected. Then the induced morphism
\begin{displaymath}
\pipet(f):\pipet(X) \rarr \pipet(S)=\piet(S)
\end{displaymath}
is open and surjective.
\end{lemma}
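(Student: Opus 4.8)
The plan is to reduce everything to the generic fibre $X_\xi$, where one can apply the comparison between the "arithmetic" and "geometric" pro-\'etale fundamental groups of Part I, i.e. Fact \ref{arithtogeom}; over a normal base essentially nothing else is available, so this reduction is really the whole idea.

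First I would assemble the ingredients. Since $S$ is normal it is geometrically unibranch, so $\pipet(S)\simeq\piet(S)$ by Lemma \ref{proetale-of-normal}; in particular this group is profinite. Write $K=\kappa(\xi)$ and choose a geometric point $\bxi$ of $X_\xi$; it induces compatible geometric points of $X$, of $\xi=\Spec K$ and of $S$, hence, by functoriality of $\pipet$ and $\piet$, a commutative square
\begin{center}
\begin{tikzcd}
\pipet(X_\xi) \arrow[r] \arrow[d] & \Gal_K \arrow[d] \\
\pipet(X) \arrow[r] & \piet(S)
\end{tikzcd}
\end{center}
with bottom map $\pipet(f)$; it commutes because the two composites $X_\xi\rarr S$ both equal the structure morphism of $X_\xi=X\times_S\xi$. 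In this square, the right vertical map $\Gal_K\rarr\piet(S)$ is surjective because $S$ is normal and integral with function field $K$ (\cite[Tag 0BQM]{StacksProject}), hence also open, being a continuous surjection of profinite groups; and the top map $\pipet(X_\xi)\rarr\Gal_K$ is open and surjective by Fact \ref{arithtogeom}, since $X_\xi$ is geometrically connected (hypothesis) and of finite type over $K$ (base change of $f$).

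Consequently the composite $\psi\colon\pipet(X_\xi)\rarr\Gal_K\rarr\piet(S)$ is open and surjective, and by the square $\psi=\pipet(f)\circ h$, where $h\colon\pipet(X_\xi)\rarr\pipet(X)$ is the left vertical map. Surjectivity of $\pipet(f)$ is then immediate, as $\rmim(\pipet(f))\supseteq\rmim(\psi)=\piet(S)$. For openness, note that $\pipet(X)$ is a Noohi group, so its open subgroups form a neighbourhood basis of $1$ and it suffices to show $\pipet(f)(U)$ is open for every open subgroup $U\leq\pipet(X)$. Now $h^{-1}(U)$ is open in $\pipet(X_\xi)$, so $\psi(h^{-1}(U))$ is open in $\piet(S)$; but $\psi(h^{-1}(U))=\pipet(f)\big(h(h^{-1}(U))\big)=\pipet(f)\big(U\cap\rmim(h)\big)$ is a subgroup of $\piet(S)$ — an intersection of subgroups, pushed forward along a homomorphism — hence an open subgroup. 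Since $\pipet(f)(U)$ contains it, $\pipet(f)(U)$ has nonempty interior and is therefore open; this proves $\pipet(f)$ is open.

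The only delicate point is this last step, and it genuinely uses the group structure: it is \emph{not} formal that an open surjection $\psi=g\circ h$ forces $g$ to be open (the map $h\colon\pipet(X_\xi)\rarr\pipet(X)$ is in general neither surjective nor open). What makes it work is that $U\cap\rmim(h)$ is still a subgroup, so its image $\pipet(f)(U\cap\rmim(h))$ is an open \emph{subgroup} of $\piet(S)$, which forces the possibly much larger set $\pipet(f)(U)$ to be open as well. Everything else is bookkeeping around functoriality of $\pipet$ and $\piet$, Lemma \ref{proetale-of-normal}, Fact \ref{arithtogeom}, \cite[Tag 0BQM]{StacksProject}, and two elementary facts: a continuous surjection of profinite groups is open, and a subgroup of a topological group with nonempty interior is open.
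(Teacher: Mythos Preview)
Your proof is correct and follows essentially the same approach as the paper's: both set up the same commutative square, use Fact \ref{arithtogeom} for the top map and the open mapping theorem for profinite groups on the right map, then pull an open subgroup $U\leq\pipet(X)$ back along $h$ and push it around the square to exhibit an open subset inside $\pipet(f)(U)$. Your write-up is in fact slightly more explicit than the paper's, spelling out surjectivity and the subgroup argument for why containing an open set forces $\pipet(f)(U)$ to be open.
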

\begin{proof}
We have a following diagram
\begin{center}
\begin{tikzpicture}
\matrix(a)[matrix of math nodes,
row sep=1.5em, column sep=2em,
text height=2ex, text depth=0.25ex]
{\pipet(X_\xi) &  \Gal_K \\ \pipet(X) &  \piet(S)\\};

\path[->>] (a-1-1) edge (a-1-2);

\path[->] (a-1-1) edge (a-2-1);
\path[->>] (a-1-2) edge (a-2-2);

\path[->] (a-2-1) edge (a-2-2);.

\end{tikzpicture}
\end{center}
As $\pipet(X)$ is Noohi, it is enough to show that the image of any open subgroup $U \subset \pipet(X)$ is open. Fix such $U$. Let $V$ be the preimage of $U$ in $\pipet(X_\xi)$. Then the image $\pipet(f)(U)$ contains the image of $V$ via $\pipet(X_\xi) \rarr \Gal_K \rarr \piet(S)$. But both $\pipet(X_\xi) \rarr \Gal_K$ and $\Gal_K \rarr \piet(S)$ are open (the first one by Fact \ref{arithtogeom} and the second one follows from Fact \ref{open-mapping-top-grps}) and thus $\pipet(f)(U)$ contains an open subset and so is open as desired.
\end{proof}

\begin{fact}\label{open-mapping-top-grps} Let $f:G \rarr H$ be a surjective morphism of Hausdorff topological groups. Assume $G$ compact. Then $f$ is open.
\end{fact}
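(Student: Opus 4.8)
The plan is to deduce this from two completely elementary facts: a continuous map from a compact space to a Hausdorff space is closed, and a \emph{surjective} homomorphism of topological groups which happens to be a closed map is automatically open. Completeness and Noohi-ness play no role whatsoever here; compactness does all the work.

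First I would record that $H = f(G)$ is compact, being the continuous image of the compact group $G$; together with the standing hypothesis that $H$ is Hausdorff, this says that $H$ is compact Hausdorff. Next I would check that $f$ is a closed map: if $C \subseteq G$ is closed, then $C$ is compact (a closed subspace of a compact space), hence $f(C)$ is compact, hence closed in the Hausdorff space $H$.

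Then I would upgrade ``closed'' to ``open'' using the group structure. Let $U \subseteq G$ be open. The observation is that
\[
f^{-1}\big(f(U)\big) \;=\; U\cdot\ker(f) \;=\; \bigcup_{k\in\ker(f)} Uk ,
\]
a union of translates of $U$, hence open in $G$; therefore $C := G\setminus f^{-1}(f(U))$ is closed. By the previous step $f(C)$ is closed in $H$, and since $f$ is surjective one has $f(C) = f(G)\setminus f(U) = H\setminus f(U)$. Hence $f(U)$ is open, as desired.

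I do not expect any genuine obstacle: the argument is purely formal. The only point that needs a moment's care is the identity $f(C) = H\setminus f(U)$, where surjectivity of $f$ is used in an essential way (it fails for non-surjective $f$). As an alternative to the closed-map argument, one could instead cover $G$ by finitely many translates of a small symmetric open neighbourhood $V$ of the identity, apply the Baire category theorem to the compact Hausdorff space $H$ to conclude that $\overline{f(V)}$ has nonempty interior, and then run the standard neighbourhood-shrinking trick to remove the closure; but the closed-map proof above is shorter and avoids Baire entirely, so that is the route I would take.
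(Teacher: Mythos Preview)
Your proof is correct; it is precisely the kind of direct verification the paper gestures at when it says the fact ``can be easily checked by hand'' (the paper does not actually write out the argument, and only additionally remarks that it is a special case of a general open mapping theorem). There is nothing to add.
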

\begin{proof}
As stated, this fact can be easily checked by hand. This is also a special case of a more general "Open mapping theorem", see \cite[Thm. 7.2.8]{Dikranjan}.
\end{proof}

\subsection{Preliminary results on connected components of schemes}\label{section:preliminary}

\subsubsection*{$\pi_0$ of an qcqs scheme}
We found Appendix A of \cite{Schroer} to be a handy reference for dealing with connected components of fpqc schemes. We include two useful statements below.

Let $X$ be a topological space and $a \in X$ a point. The \emph{connected
component} containing $a$, denoted $C_a$, is the union of all connected subsets containing $a$. This is
the  largest  connected subset containing $a$, and it is closed. In contrast, the \emph{quasicomponent} $Q = Q_a$ is defined as the intersection of all clopen neighborhoods of a, which is also closed. We list some handy facts below.

The following result is stated as \cite[Lemma A.1]{Schroer}. As remarked there, it is due to Ferrand in the affine case, and Lazard in the general case (see \cite[Prop. 6.1]{Lazard}, \cite[Cor. 8.5]{Lazard}).
\begin{lemma}\label{connected-comps-and-quasicomps}
Let $X$ be a qcqs scheme and $a \in X$ be a point. Then we have an equality $C_a = Q_a$ between the connected component and the quasicomponent containing $a$.
\end{lemma}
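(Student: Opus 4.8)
The plan is to reduce to the affine case and then exploit the structure of $\pi_0$ of a spectral space. First I would recall the general topological fact that one always has $C_a \subseteq Q_a$: the connected component containing $a$ is connected, hence contained in every clopen neighbourhood of $a$, hence in their intersection $Q_a$. So the content is the reverse inclusion $Q_a \subseteq C_a$, i.e. that the quasicomponent is connected. Since $X$ is qcqs, I would first handle $X = \Spec(A)$ affine: here $X$ is a spectral space, and the point is that for spectral spaces the Boolean algebra of clopen subsets is rich enough. Concretely, I would argue that $\pi_0(\Spec A)$ is a profinite set (a cofiltered limit of finite discrete spaces, corresponding to the idempotents of $A$), that the quotient map $q : \Spec A \to \pi_0(\Spec A)$ is closed with connected fibres, and that the fibre $q^{-1}(q(a))$ is exactly $Q_a$; connectedness of this fibre then gives $Q_a \subseteq C_a$. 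The identification of the fibre with $Q_a$ comes from the fact that clopen subsets of $\Spec A$ are precisely the preimages of clopen (equivalently, arbitrary) subsets of the profinite set $\pi_0(\Spec A)$, so the intersection of all clopen neighbourhoods of $a$ is the preimage of the single point $q(a)$.

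For the general qcqs case I would cover $X$ by finitely many affine opens $U_1, \dots, U_n$ and bootstrap from the affine statement. The cleanest route is to note that a qcqs scheme is still a spectral space (by Hochster's characterization, or directly: it is quasi-compact, $T_0$, the quasi-compact opens form a basis closed under finite intersection, and it is sober), and that the same description of clopen subsets via the profinite space $\pi_0(X)$ holds for any spectral space. Then the identical argument applies: $q : X \to \pi_0(X)$ is a closed continuous surjection onto a profinite set, its fibres are connected, and $Q_a = q^{-1}(q(a))$, so $Q_a$ is connected and hence equals $C_a$. Alternatively, if one prefers to avoid invoking the full spectral-space machinery, one can argue that clopen subsets of $X$ correspond to pairs of clopen subsets on the $U_i$ agreeing on overlaps, reducing finitely many gluing conditions to the affine case.

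The main obstacle — and the reason this is a genuine lemma rather than a triviality — is establishing that the quasicomponent is connected, equivalently that clopen subsets of a spectral space separate points of distinct quasicomponents in the strong sense that $\pi_0$ is profinite and $q^{-1}(q(a)) = Q_a$. This is where quasi-compactness is essential: in a general topological space quasicomponents can fail to be connected, and the key input is that in a spectral space every clopen cover is refined by a finite one, so the clopen subsets form a Boolean algebra whose Stone space is precisely $\pi_0(X)$. Since the statement is quoted from \cite[Lemma A.1]{Schroer} and attributed to Ferrand and Lazard, I would in practice simply cite \cite[Prop. 6.1]{Lazard} and \cite[Cor. 8.5]{Lazard} for the affine and general cases respectively, rather than reproduce the argument; but the sketch above is the proof I would give if pressed.
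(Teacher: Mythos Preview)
The paper does not give a proof of this lemma at all; it simply records the statement and cites \cite[Lemma A.1]{Schroer} together with \cite[Prop.~6.1, Cor.~8.5]{Lazard}. Your stated bottom line---to cite Lazard and Schr\"oer rather than reproduce the argument---is exactly what the paper does, and nothing more is required.

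That said, the sketch you offer ``if pressed'' has a genuine circularity that is worth flagging. You propose to use that $\pi_0(\Spec A)$ is profinite and then identify the fibre $q^{-1}(q(a))$ of the quotient map $q:X\to\pi_0(X)$ with $Q_a$. But the fibre of $q$ over $q(a)$ is \emph{by definition} the connected component $C_a$, so the assertion ``$q^{-1}(q(a))=Q_a$'' is literally the lemma. Your route around this---deducing the identification from profiniteness of $\pi_0(\Spec A)$---does not help: the standard way to establish that $\pi_0(\Spec A)$ is profinite is to identify it with the Stone space of the Boolean algebra of idempotents of $A$, and the injectivity of the comparison map from $\pi_0(\Spec A)$ to that Stone space is precisely the statement that distinct connected components are separated by a clopen, i.e.\ that $C_a=Q_a$. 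Indeed, the paper presents profiniteness of $\pi_0(X)$ as a \emph{consequence} of this lemma (Cor.~\ref{pi0-of-qcqs-is-profinite}, via \cite[Tag 0900]{StacksProject}), not as an input to it. (Your parenthetical ``equivalently, arbitrary subsets of the profinite set $\pi_0(\Spec A)$'' is also not right: only clopen subsets of $\pi_0$ pull back to clopens of $\Spec A$.)

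The actual Ferrand argument for the affine case is a direct computation with idempotents that avoids $\pi_0$ entirely: one shows $Q_a=V(I)$ where $I\subset A$ is the ideal generated by all idempotents lying in the prime $\mathfrak p_a$, and then checks by hand that $A/I$ has no nontrivial idempotent (any idempotent modulo $I$ lifts, after multiplying by a suitable product of complementary idempotents outside $\mathfrak p_a$, to an honest idempotent of $A$, which is then forced into $I$ or into $1+I$). This yields connectedness of $Q_a$ directly. Your observation that the underlying space of a qcqs scheme is spectral is correct and is a legitimate way to pass to the general case, but the spectral-space statement itself still needs this non-circular argument.
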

By \cite[Tag 0900]{StacksProject}, each quasi-compact space $X$ satisfying the assertion of the lemma above has a profinite set of connected components $\pi_0(X)$.
\begin{corollary}\label{pi0-of-qcqs-is-profinite}
Let $X$ be a qcqs scheme. Then $\pi_0(X)$ is profinite. 
\end{corollary}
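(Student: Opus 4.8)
The plan is to observe that, once Lemma \ref{connected-comps-and-quasicomps} is in hand, the statement becomes pure point-set topology. First I would note that a qcqs scheme $X$ is in particular quasi-compact, and that by Lemma \ref{connected-comps-and-quasicomps} its connected components coincide with its quasicomponents. So it suffices to prove the general topological assertion already recalled above (this is \cite[Tag 0900]{StacksProject}): if $X$ is a quasi-compact space whose connected components agree with its quasicomponents, then $\pi_0(X)$, equipped with the quotient topology, is profinite, i.e. compact, Hausdorff and totally disconnected.

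For completeness I would spell out why that holds. Give $\pi_0(X)$ the quotient topology; it is quasi-compact as a continuous image of $X$. Every clopen subset of $X$ is a union of quasicomponents, hence --- by hypothesis --- a union of connected components, so the saturated clopen subsets of $X$ are exactly the preimages of clopen subsets of $\pi_0(X)$. To obtain the Hausdorff property (in fact total disconnectedness) I would take two distinct connected components $C \neq C'$ of $X$; since $C'$ is closed in the quasi-compact space $X$ it is itself quasi-compact, and $C$ is the intersection of its clopen neighbourhoods $U_\alpha$ with $C \cap C' = \emptyset$, so finitely many of the $X \setminus U_\alpha$ already cover $C'$. Their complements intersect in a clopen set $V$ with $C \subseteq V$ and $V \cap C' = \emptyset$; thus the images of $C$ and $C'$ in $\pi_0(X)$ are separated by the clopen set $[V]$ and its complement. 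Hence distinct points of $\pi_0(X)$ lie in distinct connected components, so $\pi_0(X)$ is a totally disconnected compact Hausdorff space, i.e. profinite.

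I do not anticipate a genuine obstacle here: the only substantive ingredient is Lemma \ref{connected-comps-and-quasicomps} (the Ferrand--Lazard theorem), and everything downstream of it is elementary. If one prefers, the entire topological part can simply be cited from \cite[Tag 0900]{StacksProject}, and the proof of the corollary collapses to a single line.
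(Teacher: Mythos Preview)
Your proposal is correct and follows exactly the paper's route: the paper deduces the corollary in a single sentence from Lemma~\ref{connected-comps-and-quasicomps} together with \cite[Tag 0900]{StacksProject}, which is precisely what you do (with the topological argument additionally spelled out). There is nothing to add.
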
 

Let us mention a lemma on pro-\'etale covers that fits the discussion.
\begin{lemma}\label{conntected-comps-of-w-strictly-local}
Let $S$ be an affine scheme and $\tS \rarr S$ a pro-\'etale cover by a w-strictly local affine scheme. Then
\begin{enumerate}
\item Each connected component of $\tS$ is the strict henselization of the local ring at a certain point of $S$.
\item If $S$ is moreover noetherian, connected and normal and $\eta$ denotes its generic point, then each connected component $c \subset \tS$ is noetherian, normal and its generic point is the unique point of $c$ lying over $\eta$.
The map 
\begin{displaymath}
\pi_0(\tS_\eta) \rarr \pi_0(\tS)
\end{displaymath}
is a homeomorphism.
\end{enumerate}
\end{lemma}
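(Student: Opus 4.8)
The plan is to combine the structural description of w-strictly local rings (recalled in the \textbf{Fact} after the definition of w-local rings) with the behaviour of connected components under limits. For part (1), I would argue as follows. Since $\tS \rarr S$ is a pro-\'etale cover, it is in particular a weakly \'etale morphism, so by Olivier's theorem (Fact (c)) it is, after a further faithfully flat ind-\'etale extension, ind-\'etale; in fact $\tS \rarr S$ being affine and flat, it is already a cofiltered limit of affine \'etale $S$-schemes. Now $\tS$ is w-strictly local, so by the \textbf{Fact} its local rings at closed points are strictly henselian, and the composite $\tS^c \hookrightarrow \tS \rarr \pi_0(\tS)$ is a homeomorphism — each connected component of $\tS$ has a unique closed point. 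Fix a connected component $c \subset \tS$ with closed point $x$; I claim $\calO_{\tS,x}$ is the strict henselization of $\calO_{S,s}$ where $s$ is the image of $x$. Indeed, $\tS$ being ind-\'etale over $S$ means $\calO_{\tS,x}$ is a filtered colimit of \'etale local $\calO_{S,s}$-algebras, hence is ind-\'etale local over $\calO_{S,s}$; being strictly henselian and the colimit of pointed \'etale neighbourhoods forces it to be exactly $\calO_{S,s}^{\mathrm{sh}}$ (this is the standard characterisation of the strict henselisation as the colimit over all \'etale neighbourhoods, together with the uniqueness of the strict henselisation once one knows the ring is strictly henselian and ind-\'etale). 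Then I need to identify $\calO_{\tS,x}$ with $\calO(c)$; this follows because $c = \overline{\{x\}}$ is the spectrum of a ring which, being a connected component of the spectral space $\tS$ with unique closed point $x$, is local with maximal ideal corresponding to $x$ and is obtained from $\calO(\tS)$ by localising away from the other components, so $\calO(c) \simeq \calO_{\tS,x}$.

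For part (2), assume moreover $S$ noetherian, connected, normal with generic point $\eta$. By part (1), each component $c$ is $\Spec$ of $\calO_{S,s}^{\mathrm{sh}}$ for some $s \in S$. Strict henselisation of a local ring of a noetherian normal domain is again noetherian (henselisation preserves noetherianity, \cite[Tag 06DF/07QM]{StacksProject}) and normal (normality ascends along the ind-\'etale, in particular regular, morphism $\calO_{S,s} \rarr \calO_{S,s}^{\mathrm{sh}}$, cf. \cite[Tag 033C/06DI]{StacksProject}); so $c$ is noetherian and normal, hence an irreducible scheme with a unique generic point. Since $\calO_{S,s}^{\mathrm{sh}}$ is a domain dominating the domain $\calO_{S,s} \hookrightarrow \kappa(\eta) = K$, its generic point maps to $\eta$; and it is the \emph{unique} point of $c$ over $\eta$ because the fibre of $c \rarr S$ over $\eta$ is $\Spec$ of $\calO_{S,s}^{\mathrm{sh}} \otimes_{\calO_{S,s}} K$, a localisation of the domain $\calO_{S,s}^{\mathrm{sh}}$, hence irreducible — in fact it is $\Spec$ of $\mathrm{Frac}(\calO_{S,s}^{\mathrm{sh}})$ if $s = \eta$ is not below anything, and in general a point once one notes there is a unique minimal prime. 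Finally, for the homeomorphism $\pi_0(\tS_\eta) \rarr \pi_0(\tS)$: this map is the restriction of $\tS_\eta \hookrightarrow \tS$ to $\pi_0$; it is surjective because every component $c$ of $\tS$ meets $\tS_\eta$ (we just showed $c$ has a point over $\eta$), and injective because each component of $\tS_\eta$ lies in a unique component of $\tS$ while distinct components of $\tS$ are disjoint clopen, so hit disjoint pieces of $\tS_\eta$; moreover each $c \cap \tS_\eta$ is connected (being a single point, or more precisely irreducible as shown). Both $\pi_0$'s are profinite (Cor. \ref{pi0-of-qcqs-is-profinite}, using that $\tS$ and $\tS_\eta$ are qcqs), so a continuous bijection between them is a homeomorphism.

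I expect the main obstacle to be the bookkeeping in identifying $\calO_{\tS,x}$ precisely as the strict henselisation $\calO_{S,s}^{\mathrm{sh}}$ — one has to be careful that the pro-\'etale (a priori only weakly \'etale) cover is honestly ind-\'etale and that the colimit over \'etale neighbourhoods in $\tS$ matches the colimit defining the strict henselisation on the nose, rather than just up to some ind-\'etale extension; this is exactly the content of Lemma 7.3.8 / the structure theory of w-strictly local rings in \cite{BhattScholze}, so in the write-up I would lean on that reference rather than reprove it. The second delicate point is the irreducibility of the fibre $c_\eta = \Spec(\calO_{S,s}^{\mathrm{sh}} \otimes_{\calO_{S,s}} K)$, which I would deduce from $\calO_{S,s}^{\mathrm{sh}}$ being a domain (as $S$ is normal, its strict henselisations are normal domains) so that any localisation, in particular the generic fibre, has a unique minimal prime and is therefore connected; everything else is routine ascent of noetherianity and normality along (ind-)\'etale maps and the profiniteness of $\pi_0$ of qcqs schemes already recorded above.
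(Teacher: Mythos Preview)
Your overall strategy matches the paper's, and part (2) is essentially identical (the paper cites \cite[Cor. 18.8.14]{EGAIV4} for the single-point fibre $c_\eta$, whereas you argue it via $\calO_{S,s}^{\mathrm{sh}}$ being a domain; both are fine, though your phrase ``a point once one notes there is a unique minimal prime'' is imprecise --- you also need that the fibre of an ind-\'etale map over a point is zero-dimensional, so that every prime over $(0)$ is minimal).

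The one genuine soft spot is in part (1): you assert that ``$\tS \rarr S$ being affine and flat, it is already a cofiltered limit of affine \'etale $S$-schemes''. A pro-\'etale cover in the sense of \cite{BhattScholze} is only weakly \'etale, and weakly \'etale does not imply ind-\'etale in general --- Olivier's theorem only gives ind-\'etale after a further extension. You flag this in your obstacles paragraph, but the reference you propose to lean on (\cite[Lm.~7.3.8]{BhattScholze}) is about geometric coverings of henselian local schemes and does not address this point. The paper avoids the issue entirely: it observes that $c$, being a pro-Zariski localisation of $\tS$ (the connected component is the intersection of the clopen subsets containing it, by Lemma~\ref{connected-comps-and-quasicomps}), is weakly \'etale over $S$, and then invokes \cite[Lm.~094Z]{StacksProject}, which says that any weakly \'etale morphism induces isomorphisms on strict henselizations of local rings. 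Since $c$ is itself strictly henselian local (by w-strict locality of $\tS$), it is therefore equal to the strict henselization of its image point in $S$. This is both shorter and does not require knowing that the cover is honestly ind-\'etale. If you want to salvage your approach, you could restrict the statement to ind-\'etale covers (which is all that is needed downstream, since the w-strictly local covers constructed in \cite[Cor.~2.2.14]{BhattScholze} are ind-\'etale), but the paper's route via 094Z is cleaner and proves the lemma as stated.
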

\begin{proof}
The scheme $\tS$ is pro-étale over $S$ and $c$ is a pro-Zariski localization of $\tS$ (this is because a connected component of a qcqs scheme can be seen as an inverse limit of clopen subschemes containing it, see Lm. \ref{connected-comps-and-quasicomps}).
In particular, $c \rarr S$ is weakly \'etale. By \cite[Lm. 094Z]{StacksProject} (it relies on the theorem by Olivier, in our case it is probably an overkill, as the morphism $c \rarr S$ is actually pro-\'etale), $c \rarr S$ induces an isomorphism on the strict henselizations of the local rings. As $\tS \rarr S$ is w-strictly local, $c$ is itself a strictly henselian local ring. It follows that $c$ is a strict henselization of a local ring at some point of $S$. So, $c$ is noetherian (strict henselization of a noetherian ring is noetherian). Being weakly \'etale over a normal scheme, $c$ is normal (see \cite[Tag 0950]{StacksProject}). The scheme $c$ is local, noetherian and normal, thus integral. By \cite[Cor. 18.8.14]{EGAIV4}, the fibre $c_\eta$ contains only one point - the generic point of $c$. We are using here that the associated primes of a reduced noetherian ring are precisely the generic points of the irreducible components (see \cite[Lm. 0EME]{StacksProject} and \cite[Lm. 05AR]{StacksProject}). It follows that $\pi_0(\tS_\eta) \rarr \pi_0(\tS)$ is a (continuous) bijection. As $\tS$ is qcqs, $\pi_0(\tS)$ is compact (see Cor. \ref{pi0-of-qcqs-is-profinite}) and so $\pi_0(\tS_\eta) \rarr \pi_0(\tS)$ is in fact a homeomorphism.
\end{proof}

However, to deal with (infinite) geometric coverings, one has to be more careful. The proof of Lm. \ref{connected-comps-and-quasicomps} relies on a useful fact on the behaviour of connected components under cofiltered limits. It is essentially \cite[Proposition 8.4.1 (ii)]{EGAIV3}, but as explained in \cite{Schroer}, in \cite{EGAIV3} the scheme is only assumed to be quasi-compact, while one needs to assume qcqs.

\begin{fact}(\cite[Proposition A.2]{Schroer}) Let $X_0$ be a quasi-compact and quasi-separated scheme, and $X_\lambda$ a filtered inverse system of affine $X_0$-schemes, and $X = \invlim_{\lambda \in \Lambda} X_\lambda$. If $X = X' \sqcup X''$ is a decomposition into disjoint open subsets, then there is some $\lambda \in \Lambda$ and a decomposition $X_\lambda = X_\lambda' \sqcup X_\lambda''$ into disjoint open subsets so that $X' , X'' \subset X$ are the respective preimages.
\end{fact}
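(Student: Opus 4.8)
The plan is to convert the clopen decomposition into a global idempotent, spread that idempotent out to a finite stage of the inverse system, and then pull the resulting decomposition back to $X$. The only serious input will be that, because $X_0$ is quasi-compact \emph{and} quasi-separated, global sections commute with the relevant filtered colimit; everything else is formal.

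First I would fix the structural setup. Each $X_\lambda \rarr X_0$ is affine, and a transition map $X_\lambda \rarr X_\mu$ (for $\lambda \geq \mu$) is an $X_0$-morphism between schemes affine over $X_0$, hence itself affine. Therefore the inverse limit $X = \invlim_\lambda X_\lambda$ exists and is affine over $X_0$: writing $X_\lambda = \underline{\Spec}_{X_0}(\mathcal{A}_\lambda)$ for quasi-coherent $\calO_{X_0}$-algebras $\mathcal{A}_\lambda$, one has $X = \underline{\Spec}_{X_0}(\mathcal{A})$ with $\mathcal{A} = \dirlim_\lambda \mathcal{A}_\lambda$. Taking global sections and using the affine pushforward $\Gamma(X_\lambda,\calO_{X_\lambda}) = \Gamma(X_0,\mathcal{A}_\lambda)$, I get
\[
\Gamma(X,\calO_X) \;=\; \Gamma\!\big(X_0, \dirlim_\lambda \mathcal{A}_\lambda\big) \;=\; \dirlim_\lambda \Gamma(X_0, \mathcal{A}_\lambda) \;=\; \dirlim_\lambda \Gamma(X_\lambda, \calO_{X_\lambda}),
\]
where the middle equality is precisely the statement that on a quasi-compact quasi-separated scheme $H^0$ commutes with filtered colimits of quasi-coherent sheaves. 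This is the one and only place the hypotheses on $X_0$ enter, and it is exactly where quasi-separatedness (rather than mere quasi-compactness) is required, which accounts for the discrepancy with \cite{EGAIV3} noted above.

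Next I would invoke the standard bijection, valid for an arbitrary scheme $Z$, between decompositions $Z = Z' \sqcup Z''$ into disjoint open (hence clopen) subsets and idempotents $e \in \Gamma(Z,\calO_Z)$, with $Z'$ the locus where $e$ is a unit and $Z''$ the locus where $1-e$ is a unit. This correspondence is compatible with pullback: for a morphism $Z \rarr W$, the pullback of the idempotent cutting out a clopen $W' \subset W$ cuts out its preimage in $Z$. The given decomposition $X = X' \sqcup X''$ thus corresponds to an idempotent $e \in \Gamma(X,\calO_X)$. By the displayed identity, $e$ is the image of some element $e_\lambda \in \Gamma(X_\lambda,\calO_{X_\lambda})$. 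Since $e^2 = e$, the element $e_\lambda^2 - e_\lambda$ maps to $0$ in the filtered colimit, so there is an index $\mu \geq \lambda$ whose image $e_\mu \in \Gamma(X_\mu,\calO_{X_\mu})$ is genuinely idempotent. This $e_\mu$ defines a clopen decomposition $X_\mu = X_\mu' \sqcup X_\mu''$, and by compatibility of the correspondence with pullback along $X \rarr X_\mu$ (which sends $e_\mu$ back to $e$), the preimages of $X_\mu'$ and $X_\mu''$ in $X$ are exactly $X'$ and $X''$. Renaming $\mu$ as the required $\lambda$ completes the argument.

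The main obstacle is the commutation of global sections with the filtered colimit, i.e. the clean use of the qcqs hypothesis on $X_0$; once that identity is in hand, the remaining work — the idempotent dictionary and the elementary observation that an idempotency relation holding in a filtered colimit of rings already holds at a finite stage — is purely formal.
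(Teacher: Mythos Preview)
The paper does not supply its own proof of this statement; it is recorded as a \emph{fact} with a citation to \cite[Proposition~A.2]{Schroer}, together with the remark that quasi-separatedness is genuinely needed. So there is nothing in the paper to compare your argument against directly.

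That said, your proof is correct, and it is essentially the standard argument (and the one in \cite{Schroer}): translate the clopen decomposition into an idempotent in $\Gamma(X,\calO_X)$, use that on a qcqs base global sections commute with filtered colimits of quasi-coherent sheaves to identify $\Gamma(X,\calO_X) = \dirlim_\lambda \Gamma(X_\lambda,\calO_{X_\lambda})$, and then observe that a single polynomial identity ($e^2=e$) in a filtered colimit of rings already holds at some finite stage. Your identification of the qcqs hypothesis as entering exactly at the colimit-commutation step is also on the mark, matching the paper's commentary about why the original EGA statement (with only quasi-compactness assumed) is insufficient.
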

As explained below the proof of \cite[Proposition A.2]{Schroer}, both assumptions, quasi-compact and quasi-separated, are needed in general. All the spaces we are going to deal with will be quasi-separated. But non-finite geometric coverings are not quasi-compact and thus some extra care is needed when dealing with them. Thus, we devote some time to study connected components of (often) non-quasi-compact schemes.

\subsubsection*{Some aspects of Galois action on $\pi_0$}
The following lemma is used a couple of times throughout the text. Its proof is based on the results in \cite[Tag 0361]{StacksProject} and \cite[Tag 038D]{StacksProject}.

\begin{lemma}\label{finitegeomconncomp}
Let $X$ be a connected scheme over a field $k$ with an $l'$-rational point with $l'/k$ a finite field extension. Then $\pi_0(X_{k^{\rmsep}})$ is finite, the $\Gal_k$ action on $\pi_0(X_{k^{\rmsep}})$ is continuous and there exists a finite separable extension $l/k$ such that the induced map $\pi_0(X_{k^\rmsep}) \rarr \pi_0(X_l)$ is a bijection. Moreover, there exists the smallest field (contained in $k^{\rmsep}$) with this property and it is Galois over $k$.
\end{lemma}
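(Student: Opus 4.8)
The plan is to transport the (trivial, since $X$ is connected) connected-component decomposition of $X$ along the faithfully flat projection $\pi\colon X_{k^{\rmsep}}\to X$ and then exploit the $\Gal_k$-action on the source, using the $l'$-point to pin down an orbit with open stabiliser. First I would reduce to $k^{\rmsep}$: since $\bar k/k^{\rmsep}$ is purely inseparable, $X_{\bar k}\to X_{k^{\rmsep}}$ is a universal homeomorphism and induces a bijection on $\pi_0$, so it suffices to study $X_{k^{\rmsep}}$, on which $G:=\Gal_k$ acts (via its action on $k^{\rmsep}$) compatibly with $\pi$, and $\pi$ is faithfully flat, hence surjective. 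By \cite[Tag 0361]{StacksProject} and \cite[Tag 038D]{StacksProject}, $G$ acts on the set $\pi_0(X_{k^{\rmsep}})$, the natural map $\pi_0(X_{k^{\rmsep}})\to\pi_0(X)$ is surjective and $G$-invariant, and it identifies $\pi_0(X)$ with the orbit set $\pi_0(X_{k^{\rmsep}})/G$; the same applies after replacing $k$ by any intermediate field $l$ with $k\subseteq l\subseteq k^{\rmsep}$, using that $k^{\rmsep}$ is a separable closure of $l$ as well. In particular, connectedness of $X$ forces $G$ to act \emph{transitively} on $\pi_0(X_{k^{\rmsep}})$.

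Next I would bring in the $l'$-point. Let $x\in X$ be the image of the given morphism $\Spec(l')\to X$; then $\kappa(x)$ embeds into $l'$, so $\kappa(x)/k$ is finite, and the fibre $\pi^{-1}(x)=\Spec\big(\kappa(x)\otimes_k k^{\rmsep}\big)$ is the spectrum of a finite $k^{\rmsep}$-algebra, hence a non-empty \emph{finite} set of points which $G$ permutes. Writing $\kappa(x)^{\rmsep}$ for the separable closure of $k$ inside $\kappa(x)$, the $G$-action on this fibre factors through the finite quotient $\Gal(\kappa(x)^{\rmsep}/k)$, so every point stabiliser is open. Picking $y\in\pi^{-1}(x)$ with connected component $C\subseteq X_{k^{\rmsep}}$, we get $\rmStab_G(C)\supseteq\rmStab_G(y)$ open; combined with transitivity, $\pi_0(X_{k^{\rmsep}})\cong G/\rmStab_G(C)$ as $G$-sets, which is finite since $G$ is profinite and $\rmStab_G(C)$ is open, and the action is continuous (all stabilisers are conjugate to $\rmStab_G(C)$, hence open).

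For the last two assertions I would set $N\lhd G$ to be the kernel of the $G$-action on the finite set $\pi_0(X_{k^{\rmsep}})$; it is a finite intersection of open stabilisers, hence open and normal, so $l_0:=(k^{\rmsep})^{N}$ is a finite Galois extension of $k$. For finite intermediate $l$, the map $\pi_0(X_{k^{\rmsep}})\to\pi_0(X_l)$ is a bijection if and only if $\Gal(k^{\rmsep}/l)$ acts trivially on $\pi_0(X_{k^{\rmsep}})$: the "only if" is immediate because $\Gal(k^{\rmsep}/l)$ acts on $X_{k^{\rmsep}}$ over $X_l$, and the "if" follows from the transitivity statement of the first paragraph applied to $X_l$ (the action of $\Gal(k^{\rmsep}/l)$ is transitive on the components of $X_{k^{\rmsep}}$ lying over a fixed component of $X_l$). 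Thus the bijection holds exactly when $\Gal(k^{\rmsep}/l)\subseteq N$, i.e. exactly when $l\supseteq l_0$; this provides a suitable finite separable $l$ (e.g. $l=l_0$) and exhibits $l_0$ as the smallest one, Galois over $k$.

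The main obstacle, as usual, is the step from "the fibre of $\pi$ over the $l'$-point is finite" to "$\pi_0(X_{k^{\rmsep}})$ is finite": this genuinely needs the identification $\pi_0(X)\cong\pi_0(X_{k^{\rmsep}})/G$, so that connectedness of $X$ yields transitivity and a single point with open stabiliser suffices, and it requires keeping careful track of the (pro)finiteness and continuity of the $\Gal_k$-action on $\pi_0$ throughout — which is exactly the content extracted from \cite[Tag 0361]{StacksProject} and \cite[Tag 038D]{StacksProject}. Everything else is formal manipulation with the Galois correspondence.
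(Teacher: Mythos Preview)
Your argument is correct and follows essentially the same line as the paper. The paper does not reprove the lemma but merely cites \cite[Lemma 4.3]{PartI}, noting that ``its proof is based on the results in \cite[Tag 0361]{StacksProject} and \cite[Tag 038D]{StacksProject}''; your proposal uses precisely those inputs (the identification $\pi_0(X)\simeq\pi_0(X_{k^{\rmsep}})/\Gal_k$ and the descent of connected components to a finite separable subextension) together with the $l'$-point to produce an open stabiliser, so it is a faithful reconstruction of the intended proof.
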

\begin{proof}
This is \cite[Lemma 4.3]{PartI}.
\end{proof}

\subsubsection*{Connected components, fibres and geometric coverings}
\begin{lemma}\label{irr-comp-of-cover-qc}
Let $X$ be a topologically noetherian scheme and $Y \rarr X$ be in $\rmCov_X$. Let $Z$ be an irreducible component of $Y$. Then $Z$ is quasi-compact.
\end{lemma}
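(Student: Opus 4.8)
The plan is to pass to an affine base, to show that the induced map $g:=f|_Z\colon Z\to X$ is surjective, separated, unramified and satisfies the valuative criterion of properness with \emph{finite} fibres, and then — by noetherian induction on $\dim X$ — to conclude that $g$ is finite; since $X$ is quasi-compact this forces $Z$ to be quasi-compact. First, because $f$ is flat (going-down), the generic point $\eta$ of $Z$ is a minimal point of $Y$ lying over a minimal (generic) point $x$ of $X$; as $\eta$ lies in the fibre $Y_x$, which is a disjoint union of spectra of finite separable extensions of $\kappa(x)$, the extension $\kappa(\eta)/\kappa(x)$ is finite separable and $Z\cap Y_x=\{\eta\}$. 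Being topologically noetherian, $X$ is quasi-compact, so it has a finite affine open cover; keeping the members $V_i$ containing $x$ (they cover $Z$, since any nonempty open of the irreducible $Z$ contains $\eta$, while $f(Z)\subseteq\overline{\{x\}}$), each $Z\cap(Y\times_XV_i)=\overline{\{\eta\}}\cap(Y\times_XV_i)$ is an irreducible component of the geometric covering $Y\times_XV_i\to V_i$. A finite union of quasi-compact spaces being quasi-compact, I may assume $X$ affine, and then — replacing $X$ by the integral scheme $\overline{\{x\}}$, over which $Y$ restricts to a geometric covering with $Z$ still an irreducible component — that $X$ is integral with generic point $x$.

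Next I record the properties of $g$. It is unramified, since $\Omega_{Z/X}$ is a quotient of $\Omega_{Y/X}=0$ and $g$ is locally of finite type; it is separated (a closed immersion followed by the separated map $Y\to X$); and it satisfies the valuative criterion of properness, because for a valuation ring $V$ a lift along $Y\to X$ exists, is unique, and factors through the closed subscheme $Z$ (the closed point of $\Spec V$ is a specialization of its generic point, whose image lies in $Z$). Applying this to valuation rings centred at an arbitrary point of $X$ (a specialization of $x$) shows $g$ is surjective. To see $g$ has finite fibres, fix $w\in X$ and base-change along $\Spec(\mathcal{O}^{\mathrm{sh}}_{X,w})\to X$: by \cite[Lm.~7.3.8]{BhattScholze} the covering $Y\times_X\Spec(\mathcal{O}^{\mathrm{sh}}_{X,w})$ is a disjoint union of finite étale schemes over the strictly henselian local base, hence of copies of it; only finitely many of these copies meet the preimage of $\eta$, which is $\Spec\bigl(\mathcal{O}^{\mathrm{sh}}_{X,w}\otimes_{\mathcal{O}_{X,w}}\kappa(\eta)\bigr)$ — finite, as $\mathcal{O}^{\mathrm{sh}}_{X,w}\otimes_{\mathcal{O}_{X,w}}\kappa(x)$ is a finite product of fields (one per branch of $X$ at $w$) tensored with the finite extension $\kappa(\eta)/\kappa(x)$; each relevant copy, being local, contributes one point to the closed fibre, which is a separable base change of $Z_w$, so $Z_w$ is finite.

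Now the induction on $d=\dim X$. If $d=0$ then $X=\Spec\kappa(x)$ and, $Z$ being irreducible and unramified over a field with $Z_x=\{\eta\}$, we get $Z=\Spec\kappa(\eta)$, which is quasi-compact. For $d>0$: the locus $V\subseteq X$ over which $g$ is finite is open (it is the locus over which $g$ is proper, $g$ being quasi-finite) and contains $x$ — over an affine chart through $x$, $g$ is of finite type, so Zariski's main theorem factors it as an open immersion into a finite scheme, and the valuative criterion forces this immersion to be surjective near $x$: a point of the finite scheme over $x$ not lying on $Z$ would, via a valuation ring realizing its specialization from $\eta$, contradict the uniqueness of valuative lifts. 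Hence $U_0:=g^{-1}(V)$ is a quasi-compact dense open of $Z$ (finite over $V$), and its closed complement $g^{-1}(X\setminus V)$ sits inside the geometric covering $Y\times_X(X\setminus V)\to X\setminus V$ with $\dim(X\setminus V)<d$; by the inductive hypothesis the irreducible components of the latter are quasi-compact, and since the fibres of $g^{-1}(X\setminus V)\to X\setminus V$ are finite, $g^{-1}(X\setminus V)$ meets only finitely many of them — so it is a finite union of closed subsets of quasi-compact spaces, hence quasi-compact; therefore $Z=U_0\cup g^{-1}(X\setminus V)$ is quasi-compact.

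The two delicate points, and where I expect the real work to lie, are: (i) the finiteness of the fibres of $g$ — this is exactly where \cite[Lm.~7.3.8]{BhattScholze} (the local structure of geometric coverings over henselian local schemes) is used, together with the finiteness of $[\kappa(\eta):\kappa(x)]$ and of the number of branches; and (ii) the passage from finite fibres to quasi-compactness, i.e. knowing that the valuative criterion, via Zariski's main theorem on affine charts, makes $g$ finite over a dense open, and that the uniform bound on the fibres (by the degree and the number of branches) controls the irreducible components appearing over the complementary closed set so that the induction closes. Everything else is bookkeeping about connected components, fibres and base change.
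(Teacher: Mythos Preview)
Your approach is genuinely different from the paper's, and considerably more involved. The paper's argument is very short: it picks a qcqs pro-\'etale cover $\tX \to X$ trivializing $Y$, so that $\tY \simeq \sqcup_{i\in I}\tX$; the preimage of the generic point $\eta$ of $Z$ in $\tY$ is quasi-compact (since $\tY \to Y$ is qcqs), hence lies in $\sqcup_{i\in I'}\tX$ for some finite $I'\subset I$; by going-down for the flat map $\tY\to Y$ every point of the preimage $\tZ$ of $Z$ generalizes to a point over $\eta$, so $\tZ\subset\sqcup_{i\in I'}\tX$ is quasi-compact, and hence so is its image $Z$. No induction, no Zariski's Main Theorem, no fibre-by-fibre analysis.

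Your argument, by contrast, has real gaps. First, you write ``over an affine chart through $x$, $g$ is of finite type'' and then apply ZMT to $g$; but $g=f|_Z$ being of finite type over any open of $X$ \emph{is} quasi-compactness of $Z$ over that open, which is what you are trying to prove. What you should do is apply ZMT to an affine open $W\subset Z$ containing $\eta$ (closed in an affine chart of $Y$, hence of finite type over $X$), get $W\hookrightarrow\bar W$ with $\bar W$ finite over $X$, find $U\ni x$ with $\bar W|_U=W|_U$, and then use the valuative criterion to show $Z|_U=W|_U$ --- this last step (any $z\in Z|_U$ is a specialization of $\eta$ realized by a valuation ring, and the unique lift must land in $W$) is the content, and it is not what you wrote.

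Second, and more seriously, your induction does not close. You induct on $\dim X$, but a topologically noetherian scheme can have infinite Krull dimension; noetherian induction on closed subsets is what is needed. Even granting that, the key claim ``since the fibres of $g^{-1}(X\setminus V)\to X\setminus V$ are finite, $g^{-1}(X\setminus V)$ meets only finitely many [irreducible components of $Y\times_X(X\setminus V)$]'' is not justified. Finite fibres tell you that over each generic point $\xi_j$ of $X\setminus V$ only finitely many components $C$ have $\zeta_C\in Z$; but a component $C$ with $\zeta_C\notin Z$ can still meet $Z$ in a proper closed subset, and nothing you have said bounds how many such $C$'s occur as one varies over $X\setminus V$. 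Equivalently, you have not shown that $Z|_{X\setminus V}$ has only finitely many irreducible components, and without that the inductive hypothesis gives you nothing. The paper's one-line going-down argument over a trivializing cover bypasses all of this.
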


\begin{proof}
The image of $Z$ in $X$ sits in an irreducible component of $X$. We can base-change the situation to that component and assume that $X$ is irreducible. Let $\eta \in Z \subset Y$ be the generic point of $Z$. Let $\tX \rarr X$ be a cover in $X_{\proet}$ by a qcqs scheme such that $\tY = Y \times_X \tX$ represents a constant sheaf, i.e. $\tY \simeq \sqcup_{i \in I} \tX$, where the indexing set $I$ is possibly infinite. The morphism $\tX \rarr X$ is qcqs. Thus, the same is true for $\tY \rarr Y$ and in turn the preimage $\tilde{E}$ of $\eta$ in $\tY$ is quasi-compact. So there is a finite subset $I' \subset I$ such that $\tilde{E} \subset \sqcup_{i \in I'} \tX \subset \tY$. Let $\tilde{Z}$ be the preimage of $Z$ in $\tY$. Any point of $Z$ generalizes to $\eta$ and so, by flatness of $\tY \rarr Y$, the going-down property implies that any point of $\tilde{Z}$ generalizes to a point in $\tilde{E}$. It follows that the closure of $\tilde{E}$ in $\tY$ contains $\tilde{Z}$. But this closure is contained in $\sqcup_{i \in I'} \tX \subset \tY$. This last set is quasi-compact. As $\tZ$ is closed in $\tY$, it is quasi-compact as well. As $\tZ \rarr Z$ is surjective, $Z$ is quasi-compact, as desired.
\end{proof}

\begin{rmk}
An alternative proof of the last lemma can be given if the normalization $X^\nu$ of $X$ is topologically noetherian. Let $X^\nu \rarr X$ be the normalization map. Then the base-change $Y\times_X X^\nu$ is the normalization $Y^\nu$ of $Y$ (see \cite[Lm. 03GV]{StacksProject}) and we have a diagram
\begin{center}
\begin{tikzpicture}
\matrix(a)[matrix of math nodes,
row sep=1.5em, column sep=2em,
text height=1ex, text depth=0.25ex]
{Y^\nu &  Y \\  X^\nu &  X. \\};

\path[->] (a-1-1) edge (a-2-1);
\path[->] (a-1-1) edge  (a-1-2);
\path[->] (a-2-1) edge (a-2-2);
\path[->] (a-1-2) edge  (a-2-2);.

\end{tikzpicture}
\end{center}
Each irreducible component of $Y$ is the image of a connected component of $Y^\nu$. Thus, it is enough to show that the connected components of $Y^\nu$ are quasi-compact. As $X^\nu$ is topologically noetherian, we can apply Lm. \ref{proetale-of-normal} to get that each connected component of $Y^\nu$ is finite (\'etale) over $X^\nu$  and thus quasi-compact.
\end{rmk}

\begin{lemma}\label{wrtngassum}
Let $X$ be a connected reduced topologically noetherian scheme and let $Y \in \rmCov_X$ be connected. Then there exist open immersions $U_n \stackrel{i_n}{\rarr} Y$ and closed immersions $Z_n \stackrel{j_n}{\rarr} Y$, $n \in \bbZ_{\geq 0}$ such that:
\begin{enumerate}
\item \label{cdtnfiltr} $U_n$, $Z_n$ are of finite type over $X$, $j_n$ factorizes through $i_n$ and $i_n$ factorizes through $j_{n+1}$, i.e. we have $Z_n \rarr U_n \rarr Z_{n+1} \rarr U_{n+1} \rarr... \rarr Y$,
\item \label{cdtnsum} $\bigcup_n U_n = Y$,
\item \label{cdtnirrcomp} Each $Z_n$ is a finite union of irreducible components of $Y$.
\end{enumerate}
\end{lemma}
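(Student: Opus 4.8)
The plan is to take $Z_0\subseteq Z_1\subseteq\cdots$ to be an increasing sequence of finite unions of irreducible components of $Y$ and to squeeze each open $U_n$ between two consecutive ones. First I would record some structural facts about $Y$. Since $X$ is reduced and $Y\to X$ is \'etale, $Y$ is reduced; since $X$ is topologically noetherian it is in particular locally topologically noetherian, hence so is $Y$ by \cite[Lm. 6.6.10]{BhattScholze}, and therefore the irreducible components of $Y$ are locally finite, so \emph{every} union of irreducible components of $Y$ is closed. By Lemma \ref{irr-comp-of-cover-qc} each irreducible component $W$ of $Y$ is quasi-compact; being a closed subscheme of the locally topologically noetherian $Y$ it is then topologically noetherian, and being quasi-compact and locally of finite type over $X$ it is of finite type over $X$. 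Finally, covering $W$ by finitely many affine opens of $Y$, each chosen inside a topologically noetherian open and hence itself topologically noetherian, and noting that only finitely many components of $Y$ meet each such open, one sees that $W$ meets only finitely many other irreducible components of $Y$.

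The next point is that $Y$ has only countably many irreducible components. I would argue this via the graph $G$ whose vertices are the irreducible components of $Y$ and which has an edge between two components exactly when they meet: if $G$ were disconnected, the unions of the components in the two parts would be disjoint (no edges across) nonempty closed (local finiteness) subsets covering $Y$, contradicting connectedness of $Y$. So $G$ is connected, and since every vertex has finite degree by the previous paragraph, fixing a vertex and taking successively larger balls writes the vertex set as a countable union of finite sets.

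Now enumerate the irreducible components as $W_0,W_1,W_2,\dots$ and construct the filtration recursively. Set $k_0=0$, $Z_0=W_0$. Given $k_n$ and $Z_n=W_0\cup\dots\cup W_{k_n}$ (with reduced structure), let $S_n$ be the finite set of indices $i$ with $W_i\cap Z_n\neq\emptyset$, put $k_{n+1}=\max(S_n\cup\{k_n+1\})$ and $Z_{n+1}=W_0\cup\dots\cup W_{k_{n+1}}$, and define
\[
U_n \;=\; Y\setminus\bigcup_{i>k_{n+1}}W_i ,
\]
which is open because the removed union is closed. A point of $U_n$ lies in some $W_j$, necessarily with $j\le k_{n+1}$, so $U_n\subseteq Z_{n+1}$ as sets; hence $U_n$ is open in the topologically noetherian scheme $Z_{n+1}$, so quasi-compact, so of finite type over $X$. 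Since $Z_n$ meets no $W_i$ with $i>k_{n+1}$ (such an $i$ would lie in $S_n$), we get $Z_n\subseteq U_n$. Then condition \ref{cdtnirrcomp} holds by construction; condition \ref{cdtnfiltr} holds because $Z_n$ is closed in $Y$ and lies in the open $U_n$ (so $j_n$ factors through $i_n$ as a closed immersion), and because $U_n$ is reduced and set-theoretically contained in the reduced closed subscheme $Z_{n+1}$ (so $i_n$ factors through $j_{n+1}$ --- the one place where reducedness of $Y$ enters); and condition \ref{cdtnsum} holds because $k_{n+1}\ge k_n+1$ forces $k_n\to\infty$, whence $\bigcup_n U_n\supseteq\bigcup_n Z_n=\bigcup_i W_i=Y$.

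I do not expect any step to be a real obstacle. The two mildly delicate points are the countability of the set of irreducible components, which genuinely relies on connectedness of $Y$ together with the finite-degree property, and the quasi-compactness of the interpolating opens $U_n$ --- which is exactly why it is important to produce $U_n$ inside the topologically noetherian $Z_{n+1}$ rather than directly inside the possibly non-quasi-compact $Y$.
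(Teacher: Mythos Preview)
Your argument is correct. Both your proof and the paper's rest on Lemma~\ref{irr-comp-of-cover-qc} (quasi-compactness of irreducible components), but the constructions diverge from there. The paper proceeds by direct alternation: pick $Z_1$ an irreducible component, take $U_1$ any connected quasi-compact open neighbourhood of it, let $Z_2$ be the union of all components meeting $U_1$ (finitely many since $U_1$ is noetherian), and repeat; exhaustion $\bigcup_n U_n = Y$ is then shown by a constructibility/closure-under-specialization argument invoking connectedness of $Y$ only at the end. You instead first prove countability of the set of irreducible components via the incidence-graph argument (this is where you spend connectedness), enumerate them, and then define everything explicitly in terms of initial segments, with $U_n$ as the complement of a tail; exhaustion is automatic from $k_n\to\infty$.

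Your route is more explicit and trades the specialization argument for the countability step; the paper's route is slightly slicker in that it never needs to enumerate. One practical difference worth noting: the paper's construction yields \emph{connected} $Z_n$ and $U_n$ throughout, and this connectedness is in fact invoked later (see the proof of Proposition~\ref{forclosedfibre}, which speaks of ``connected quasi-compact closed $Z_n$''), even though the lemma as stated does not demand it. Your $Z_n$ and $U_n$ need not be connected. This is not a defect in your proof of the stated lemma, but if you want a drop-in replacement you could either reorder the enumeration so that each $Z_n$ is connected (easy, using the same graph), or simply note that one may pass to the connected component of $W_0$ in each $Z_n$.
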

\begin{proof}
Observe, that as $Y$ is locally topologically noetherian and locally of finite type over $X$, it is enough to ensure that $Z_n$ and $U_n$ are quasi-compact, to obtain that they are of finite type over $X$. By Lm. \ref{irr-comp-of-cover-qc}, every irreducible component of $Y$ is quasi-compact. Let us define $Z_1$ to be any irreducible component of $Y$ and $U_1$ to be a connected quasi-compact open neighbourhood of $Z_1$ (it exists as $Z_1$ is quasi-compact and connected and $Y$ is locally topologically noetherian and so locally connected). Now, let
\begin{displaymath}
Z_2 = \bigcup_{\textrm{irr. comp. $Z$ of $Y$ s.t. $Z \cap U_1 \neq \emptyset$}} Z
\end{displaymath}
As $U_1$ is quasi-compact subset of a locally noetherian space it is noetherian and we see that the indexing set in the above sum is finite. By Lm. \ref{irr-comp-of-cover-qc}, we see that $Z_2$ is quasi-compact. $Z_2$ is a closed subset of $Y$ and we put the reduced induced structure on it. Moreover, $U_1 \subset Z_2$ and $Z_2$ is connected. We can now take $U_2$ to be a connected quasi-compact open containing $Z_2$. Repeating this procedure we produce connected schemes $Z_n$, $U_n$ satisfying (1) and (3). To check (2) we need to show that $U_\infty=\bigcup_n U_n$ is equal to $Y$. From connectedness of $Y$, it is enough to show that $U_\infty$ is clopen. It is obviously open. In particular constructible. Thus, it is enough to show that it is closed under specialization (see \cite[Tag 0542]{StacksProject}. It is stated for noetherian topological spaces but clearly locally noetherian is enough, as for any $y \in \overline{U_\infty}$ we can check whether $y \in U_\infty$ by restricting to a topologically noetherian neighbourhood $V$ of $y$ and working with the intersection $U_\infty \cap V$). Let $\xi \in U_\infty$ and assume $\xi$ specializes to a point $y \in Y$. Let $m$ be such that $\xi \in U_m \subset U_\infty$. There exist an irreducible component $Z$ of $Y$ containing $\xi$. It is closed and so $y \in Z$. But $Z \subset Z_{m+1}$ by construction. Thus, $y \in Z_{m+1} \subset U_{m+1} \subset U_{\infty}$ as desired.

\end{proof}
\begin{rmk}\label{closureqc}
One can use the above result to check that the closure of a quasi-compact subset of $Y$ remains quasi-compact.
\end{rmk}

\begin{obs}\label{globalsectionsunion}
Let $Y$ be a scheme and let $U_1 \subset U_{2} \subset U_{3} \subset \ldots \subset Y$ be an increasing sequence of open subschemes such that $\bigcup_n U_n = Y$. Then, directly from the sheaf property, it follows that
\begin{displaymath}
\Gamma(Y,\calO_Y) = \varprojlim \Gamma(U_n, \calO_{U_n}).
\end{displaymath}
\end{obs}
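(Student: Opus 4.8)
The plan is to deduce this directly from the sheaf axiom for $\calO_Y$ applied to the open cover $\{U_n\}_{n \geq 1}$ of $Y$, exploiting that this cover is totally ordered by inclusion so that the gluing condition collapses to the defining condition of the inverse limit.

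First I would record that $\{U_n\}_{n}$ is an open cover of $Y$, which is exactly the hypothesis $\bigcup_n U_n = Y$. The sheaf property of $\calO_Y$ then yields an exact sequence
\[
0 \rarr \Gamma(Y,\calO_Y) \rarr \prod_n \Gamma(U_n,\calO_{U_n}) \rightrightarrows \prod_{n,m} \Gamma(U_n \cap U_m, \calO_{U_n \cap U_m}),
\]
so that $\Gamma(Y,\calO_Y)$ is identified with the equalizer of the two parallel maps. Now I would use that $U_n \subset U_m$ whenever $n \leq m$, hence $U_n \cap U_m = U_{\min(n,m)}$; consequently the equalizer consists precisely of the families $(s_n)_n \in \prod_n \Gamma(U_n,\calO_{U_n})$ satisfying $s_m|_{U_n} = s_n$ for all $n \leq m$. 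Since the restriction maps $\Gamma(U_m,\calO_{U_m}) \rarr \Gamma(U_n,\calO_{U_n})$ ($n \leq m$) are exactly the transition maps of the system, such families are by definition the elements of $\varprojlim_n \Gamma(U_n,\calO_{U_n})$, and the identification is manifestly compatible with the restriction maps to each $U_n$.

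There is no real obstacle here; the only mild point to make explicit is that, because $\bbN$ is totally ordered and the $U_n$ are nested, the full compatibility over all pairs $(n,m)$ is equivalent to the sequential compatibility $s_{n+1}|_{U_n} = s_n$, which then determines $(s_n)_n$ as an element of the inverse limit. This is immediate from $U_n \cap U_m = U_{\min(n,m)}$, and hence the whole statement is, as asserted, a formal consequence of the sheaf property.
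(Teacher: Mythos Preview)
Your proof is correct and is precisely the unpacking of the paper's own justification: the paper does not give a separate proof but simply asserts that the equality follows ``directly from the sheaf property'', and your argument is exactly that. There is nothing to add.
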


\begin{lemma} Let $Y$ be a reduced scheme over an algebraically closed field $k$ having a filtration $Z_0 \subset U_0 \subset Z_1 \subset U_1 \subset \ldots \subset Y$ with $U_i$ open and $Z_i$ connected and proper over $k$. Then $\Gamma(U,\calO_U) = k$
\end{lemma}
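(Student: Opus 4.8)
The plan is to compute the global sections of $U := \bigcup_n U_n$, the open subscheme of $Y$ to which the statement refers (when the filtration exhausts $Y$, as in Lemma~\ref{wrtngassum}, one has $U = Y$). The chain $U_0 \subset U_1 \subset \cdots$ is an increasing open cover of $U$, so Observation~\ref{globalsectionsunion} yields
\begin{displaymath}
\Gamma(U,\calO_U) \;=\; \varprojlim_n \Gamma(U_n,\calO_{U_n}).
\end{displaymath}
It therefore suffices to show that every compatible system $(f_n)_n$ with $f_n \in \Gamma(U_n,\calO_{U_n})$ is \emph{constant}, i.e. each $f_n$ lies in the image of the structure map $k \rarr \Gamma(U_n,\calO_{U_n})$ and these constants agree.

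The only geometric input is the classical fact that a connected reduced scheme proper over the algebraically closed field $k$ has global functions equal to $k$ (properness makes this ring a finite-dimensional $k$-algebra, reducedness makes it a product of finite field extensions of $k$, hence of copies of $k$, and connectedness forbids nontrivial idempotents). First I would reduce to the case where every $Z_i$ is reduced: since $Y$ is reduced so is each open subscheme $U_i$, hence the immersion $U_{i-1} \rarr Z_i$ factors through $Z_i^{\rmred}$, and one may replace each $Z_i$ by $Z_i^{\rmred}$ throughout the filtration without affecting the hypotheses or the $U_i$. Now, given $(f_n)_n$, restrict $f_{n+1}$ along the immersion $Z_{n+1} \rarr U_{n+1}$; by the fact just recalled this restriction is some $c_{n+1} \in k$. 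Restricting once more along $U_n \rarr Z_{n+1}$ and using that the interleaved immersions compose to the inclusion $U_n \rarr U_{n+1}$, we get $f_n = f_{n+1}|_{U_n} = c_{n+1}$ in $\Gamma(U_n,\calO_{U_n})$. Thus each $f_n$ is a constant, and since $U$ is nonempty (it contains $Z_0$) the maps $k \rarr \Gamma(U_n,\calO_{U_n})$ are injective, so compatibility of the system forces all the $c_n$ to coincide. Hence $\varprojlim_n \Gamma(U_n,\calO_{U_n}) = k$, which is the assertion.

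I do not expect any genuine difficulty here; the points that need a little care are purely formal — checking that the interleaved immersions $Z_0 \rarr U_0 \rarr Z_1 \rarr U_1 \rarr \cdots$ compose compatibly with the inclusions into $U$ so that the double restriction above really recovers $f_{n+1}|_{U_n} = f_n$, and invoking the standard facts that a morphism out of a reduced scheme factors through the reduction of the target and that the ring of global functions on a connected reduced scheme proper over $\bk$ is just $k$.
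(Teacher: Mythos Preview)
Your proof is correct and follows essentially the same route as the paper: both use Observation~\ref{globalsectionsunion} to write $\Gamma(U,\calO_U)=\varprojlim_n\Gamma(U_n,\calO_{U_n})$ and then observe that each transition map $\Gamma(U_{n+1},\calO_{U_{n+1}})\rarr\Gamma(U_n,\calO_{U_n})$ factors through $\Gamma(Z_{n+1},\calO_{Z_{n+1}})=k$. You are slightly more careful than the paper in explicitly passing to $Z_i^{\rmred}$ to justify $\Gamma(Z_{n+1},\calO_{Z_{n+1}})=k$, a point the paper leaves implicit.
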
\label{globalsectionsoverfield}
\begin{proof}
By Obs. \ref{globalsectionsunion}, $\Gamma(U,\calO_U) = \varprojlim \Gamma(U_n,\calO_{U_n})$. But every map $\Gamma(U_{n+1},\calO_{U_{n+1}}) \rarr \Gamma(U_n,\calO_{U_n})$ factorizes through $\Gamma(Z_{n+1},\calO_{Z_{n+1}})=k$ (from the properness of $Z_n$ and $k=\bk$). Thus, $\varprojlim \Gamma(U_n,\calO_{U_n}) = \varprojlim \Gamma(Z_{n},\calO_{Z_{n}}) = k$.
\end{proof}

The following lemma makes precise the statement that "a flat degeneration of a disconnected scheme is either disconnected or nonreduced".
\begin{lemma}\label{reducedspecialimpliesconnectedgeneric}
Let $R$ be a dvr and let $X$ be a connected scheme flat over $R$. If the special fibre $X_s$ is reduced, then the generic fibre $X_\xi$ is connected.
\end{lemma}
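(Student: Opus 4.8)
The plan is to argue by contradiction, reducing everything to a short commutative-algebra statement about idempotents. First note that since $R$ is a dvr, $\Spec R=\{\xi,s\}$ with $\{\xi\}$ open, so $X_\xi$ is an \emph{open} subscheme of $X$ and $X_s$ is a closed subscheme. Flatness of $X$ over $R$ means precisely that on every affine open $\Spec A\subseteq X$ the uniformizer $\pi\in R$ is a nonzerodivisor on $A$; consequently $A\hookrightarrow A[1/\pi]$, so $\pi$ lies in no minimal prime of $A$, and hence $\Spec A[1/\pi]$ is dense in $\Spec A$. Globalizing, $X_\xi$ meets every irreducible component of $X$ in its generic point, so $X_\xi$ is dense in $X$. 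Now suppose $X_\xi$ is disconnected, say $X_\xi=V_1\sqcup V_2$ with $V_1,V_2$ nonempty open. I will show that the closures $\overline{V_1},\overline{V_2}$ taken in $X$ are disjoint; since they are closed and, by density of $X_\xi$, satisfy $\overline{V_1}\cup\overline{V_2}=X$, they would then be nonempty clopen subsets giving a disconnection of $X$, contradicting the hypothesis that $X$ is connected.

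Disjointness of the two closures is a local question on $X$. Indeed, if $z\in\overline{V_1}\cap\overline{V_2}$ and $\Spec A$ is an affine open neighbourhood of $z$, then — because the open neighbourhoods of $z$ contained in $\Spec A$ are cofinal among all open neighbourhoods of $z$ in $X$ — one has $\overline{V_i}\cap\Spec A=\overline{V_i\cap\Spec A}^{\Spec A}$, while $X_\xi\cap\Spec A=\Spec A[1/\pi]=(V_1\cap\Spec A)\sqcup(V_2\cap\Spec A)$. So it suffices to prove the affine statement: if $A$ is flat over $R$ with $A/\pi A$ reduced, then every decomposition $\Spec A[1/\pi]=W_1\sqcup W_2$ into open (possibly empty) pieces has $\overline{W_1}^{\Spec A}\cap\overline{W_2}^{\Spec A}=\emptyset$. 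It is here, through the condition ``$A/\pi A$ reduced'', that the hypothesis on $X_s$ enters.

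The core of the proof is this affine statement, which I would handle by idempotent descent. The decomposition corresponds to an idempotent $e\in A[1/\pi]$; write $e=a/\pi^n$ with $a\in A$ and $n$ minimal. Since $\pi$ is a nonzerodivisor on $A$, the identity $e^2=e$ gives $a^2=\pi^n a$ in $A$. If $n\ge 1$, reducing modulo $\pi$ yields $\bar a^2=0$ in $A/\pi A$; reducedness forces $\bar a=0$, so $a=\pi a'$ and $e=a'/\pi^{\,n-1}$, contradicting minimality of $n$. Hence $n=0$ and $e\in A$, so $\Spec A=D(e)\sqcup D(1-e)$ is a clopen decomposition; since $W_1\subseteq D(e)$, $W_2\subseteq D(1-e)$ and $D(e)\cap D(1-e)=\emptyset$, the closures $\overline{W_1}^{\Spec A},\overline{W_2}^{\Spec A}$ are disjoint. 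Feeding this back through the reduction gives $\overline{V_1}\cap\overline{V_2}=\emptyset$ and the desired contradiction.

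I expect the only real subtlety to be the reduction step: one must check carefully that ``the closures are disjoint'' really is a local property on $X$, which is exactly the cofinality-of-neighbourhoods observation above. The algebra itself is immediate once set up correctly, and it is genuinely the place where reducedness of the special fibre is needed — the connected scheme $X=\Spec\bigl(R[x]/(x(x-\pi))\bigr)$, whose special fibre $\Spec\bigl(k[x]/(x^2)\bigr)$ is non-reduced and whose generic fibre $\Spec\bigl(K[x]/(x(x-\pi))\bigr)\cong\Spec K\sqcup\Spec K$ is disconnected, already shows that the hypothesis cannot be dropped.
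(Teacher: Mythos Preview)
Your proof is correct. The paper does not give its own argument for this lemma but simply cites \cite[Tag 055J]{StacksProject}; the idempotent-lifting argument you wrote out is essentially the standard proof (and is the one used in the Stacks Project), so there is nothing to compare.
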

\begin{proof}
This is \cite[Tag 055J]{StacksProject}.
\end{proof}

The following remark is not used later and can be skipped, but gives some extra intuition.
\begin{rmk}
Let $X$ be a connected noetherian scheme such that the normalization $X^\nu \rarr X$ is finite (e.g. $X$ Nagata). Let $Y \in \rmCov_X$ be connected and $\bx$ be a geometric point on $X$. Then $Y_{\bx}$ is countable.
Indeed, using the van Kampen theorem in \cite{PartI}, we can write $\pipet(X,\bx)$ as the Noohi completion of a quotient of $*_v^{\rmtop}G_v*D$, $v$ running over a finite set, $G_v$ - profinite and $D \simeq \bbZ^{*r}$ a discrete and countable group. Fix some $v_0$. Then, with the notation as in the proof of \cite[Thm. 4.14]{PartI}, the sets $O_{v_0}^N$ are finite and $Y_{\bx} = \bigcup_{N > 0}\bigcup_{o \in O_{v_0}^N} o$, which finishes the proof, as the $G_{v_0}$-orbits $o$ are finite.
\end{rmk}

\subsubsection*{Some topology involving $\pi_0$'s of non-noetherian schemes}
\begin{lemma}\label{openonpi0}
Let $f:  W \rarr T$ be a qcqs morphism of schemes. Assume that each connected component of $T$ is locally connected (e.g. each connected component is topologically noetherian). Assume that the image of $W$ is dense in every connected component of $T$. Then the induced map $\pi_0(f) : \pi_0(W) \rarr \pi_0(T)$ is a topological quotient map.
\end{lemma}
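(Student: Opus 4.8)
The plan is to reduce to the case where $T$ (and hence $W$) is quasi-compact and quasi-separated. In that case $\pi_0(W)$ and $\pi_0(T)$ are profinite by Corollary \ref{pi0-of-qcqs-is-profinite}, so $\pi_0(f)$ is a continuous map between compact Hausdorff spaces; it is surjective because $f(W)$ meets every connected component of $T$, hence it is closed (a closed subset of $\pi_0(W)$ is compact, its image is compact, hence closed in the Hausdorff space $\pi_0(T)$), and a closed continuous surjection is a topological quotient map. So the qcqs case will need only that $T$ is qcqs and that $f(W)$ meets every connected component of $T$.

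For the general case I would first record the formal part: $\pi_0(f)$ is continuous by functoriality of $\pi_0$ and surjective by the density hypothesis; since $\pi_0(W)$, $\pi_0(T)$ carry the quotient topologies from $W$, $T$, being a quotient map amounts to the statement that for $V \subseteq \pi_0(T)$, if $\pi_0(f)^{-1}(V)$ is open then $V$ is open. Writing $p_W : W \rarr \pi_0(W)$, $p_T : T \rarr \pi_0(T)$ and $B := p_T^{-1}(V)$, and using commutativity of the square relating $f, \pi_0(f), p_W, p_T$ (which gives $f^{-1}(B) = p_W^{-1}(\pi_0(f)^{-1}(V))$), this translates to the assertion: \emph{if $B \subseteq T$ is a union of connected components with $f^{-1}(B)$ open in $W$, then $B$ is open in $T$}.

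Next I would localize. Openness of $B$ may be checked on an affine open cover $\{T'\}$ of $T$; for $W' := f^{-1}(T')$ the morphism $W' \rarr T'$ is again qcqs, and $W'$, $T'$ are qcqs since $T'$ is affine. The point is that the qcqs case applies to $W' \rarr T'$ and $B' := B \cap T'$: indeed $B'$ is a union of connected components of $T'$ (each component of $T'$ lies in a component of $T$, which is contained in $B$ or disjoint from it), $f^{-1}(B') = f^{-1}(B) \cap W'$ is open, and $f(W')$ meets every connected component $C'$ of $T'$. For this last claim I would argue: $C'$ lies in a unique connected component $C$ of $T$ and is in fact a connected component of the open subset $C \cap T'$ of $C$; since $C$ is locally connected, so is $C \cap T'$, hence $C'$ is open in $C$; as $f(W) \cap C$ is dense in $C$ and $C'$ is a nonempty open subset of $C$, we get $f(W) \cap C' \neq \emptyset$, and $C' \subseteq T'$ forces $f(W) \cap C' \subseteq f(W') \cap C'$. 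Applying the qcqs case to $W' \rarr T'$ then yields $B'$ open in $T'$; ranging over the cover gives $B$ open in $T$.

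The only real content is the localization bookkeeping: the assertion that each connected component of the affine open $T'$ is \emph{open} inside its ambient connected component of $T$ is exactly where "each connected component of $T$ is locally connected" gets used, and the fact that $f(W')$ still hits every component of $T'$ is where the density (rather than mere surjectivity on $\pi_0$) of the image of $W$ is needed. Everything else — profiniteness of $\pi_0$ of qcqs schemes, and "a continuous surjection of compact Hausdorff spaces is a quotient map" — is standard.
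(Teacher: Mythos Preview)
Your proof is correct and follows essentially the same approach as the paper's: both reduce to affine opens $T'\subset T$, observe that $B\cap T'$ is a union of connected components of $T'$, use local connectedness of components of $T$ to see that each component of $T'$ is open in its ambient component of $T$ (so density gives surjectivity of $\pi_0(f|_{W'})$), and then invoke that a continuous surjection between profinite (compact Hausdorff) spaces is a quotient map. Your write-up is a bit more explicit about the formal bookkeeping, but the content is the same.
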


\begin{proof}
The map $\pi_0(f)$ is surjective by the assumption of dense images. Let $U_0 \subset \pi_0(T)$. Assume that $\pi_0(f)^{-1}(U_0)$ is open. We want to show that $U_0$ is open. As the topology on $\pi_0(T)$ is the quotient topology from $T$, it is enough to show that $U=\pi^{-1}(U_0) \subset T$ is open. We have a commutative diagram
\begin{center}
\begin{tikzpicture}
\matrix(a)[matrix of math nodes,
row sep=1.5em, column sep=2em,
text height=1ex, text depth=0.25ex]
{W &  T \\  \pi_0(W) &  \pi_0(T). \\};

\path[->] (a-1-1) edge node[left]{$\pi$} (a-2-1);
\path[->] (a-1-1) edge node[above]{$f$} (a-1-2);
\path[->] (a-2-1) edge node[above]{$\pi_0(f)$} (a-2-2);
\path[->] (a-1-2) edge node[left]{$\pi$} (a-2-2);.

\end{tikzpicture}
\end{center}
Thus, $f^{-1}(U) = \pi^{-1}(\pi_0(f)^{-1}(U_0))$ is open. To prove that $U$ is open, it is enough to show that, for each affine open $V$ of $T$, the intersection $U \cap V$ is open in $V$. Fix such $V$ and denote $W_V = f^{-1}(V)$. Observe that $f|_{W_V}^{-1}(U \cap V)= f^{-1}(V) \cap W_V$ is open in $W_V$. Consider the commutative diagram of topological spaces 

\begin{center}
\begin{tikzpicture}
\matrix(a)[matrix of math nodes,
row sep=2em, column sep=3.5em,
text height=1ex, text depth=0.25ex]
{W_V & V & T \\  \pi_0(W_V) & \pi_0(V) & \pi_0(T). \\};

\path[->] (a-1-1) edge node[left]{$\pi$} (a-2-1);
\path[->] (a-1-1) edge node[above]{$f|_{W_V}$} (a-1-2);
\path[->] (a-1-2) edge node[above]{$\subset$} (a-1-3);
\path[->] (a-2-1) edge node[above]{$\pi_0(f|_{W_V})$} (a-2-2);
\path[->] (a-2-2) edge (a-2-3);
\path[->] (a-1-2) edge node[left]{$\pi$} (a-2-2);
\path[->] (a-1-3) edge node[left]{$\pi$} (a-2-3);.

\end{tikzpicture}
\end{center}
It follows from the diagram that there exists a subset $U_0' \subset \pi_0(V)$ such that $V \cap U = \pi^{-1}(U_0')$.
Moreover, as $V$ is affine and $f$ is qcqs, $\pi_0(f|_{W_V})$ is a (continuous) surjective map of compact spaces and so a quotient map. Surjectivity of $\pi_0(f|_{W_V})$ follows from the assumptions: local connectedness of connected components of $T$ implies that each connected component of $V$ is an open subset of a connected component of $T$ and by the assumption that the image of $f$ is dense in every connected component of $T$, we get the desired surjectivity. As $\pi^{-1}(\pi_0(f|_{W_V})^{-1}(U_0')) = f^{-1}(V\cap U)$ is open and both $\pi$ and $\pi_0(f|_{W_V})$ are quotient maps, we conclude that $U_0'$ is open and thus $V \cap U$ is open as desired.
\end{proof}

\begin{lemma}\label{openonpi0'}
Let $f :  W \rarr T$ be a continuous map of topological spaces. Assume that $f$ is a topological quotient map (e.g. surjective and open or surjective and closed).
Then $\pi_0(f)$ is a topological quotient map.
\end{lemma}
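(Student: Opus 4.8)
The plan is to reduce the statement to an elementary point-set fact: if $p \colon X \to Y$ is a topological quotient map and $g \colon Y \to Z$ is a map of topological spaces such that $g \circ p$ is a topological quotient map, then $g$ is itself a topological quotient map. This is immediate: $g$ is surjective because $g\circ p$ is; $g$ is continuous because $p$ is a quotient map and $g\circ p$ is continuous; and if $g^{-1}(V)$ is open then $p^{-1}(g^{-1}(V)) = (g\circ p)^{-1}(V)$ is open, so $V$ is open because $g\circ p$ is a quotient map. I would also record the standard fact that a composition of two topological quotient maps is again a topological quotient map (same one-line argument with preimages).

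Next I would set up the commutative square with horizontal maps $f \colon W \to T$ and $\pi_0(f)\colon \pi_0(W) \to \pi_0(T)$ and vertical maps $\pi_W \colon W \to \pi_0(W)$, $\pi_T \colon T \to \pi_0(T)$. Here $\pi_0(f)$ is well-defined because the continuous image of a connected set is connected, so $f$ carries each connected component of $W$ into a connected component of $T$; the square then commutes by construction. The maps $\pi_W$ and $\pi_T$ are topological quotient maps by the very definition of $\pi_0(-)$, which is the set of connected components equipped with the quotient topology.

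Finally I would apply the auxiliary fact with $p = \pi_W$ and $g = \pi_0(f)$: the composite $g\circ p = \pi_0(f)\circ \pi_W$ equals $\pi_T \circ f$, which is a composition of the two topological quotient maps $f$ (by hypothesis) and $\pi_T$ (by construction of $\pi_0$), hence a topological quotient map. The auxiliary fact then yields at once that $\pi_0(f)$ is a topological quotient map. The parenthetical cases ``$f$ surjective and open'' or ``$f$ surjective and closed'' require nothing extra, since such maps are automatically topological quotient maps.

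I do not anticipate a genuine obstacle: the content is purely formal topology. The only points that need care are to note explicitly that $\pi_0(-)$ carries the quotient topology (so that $\pi_W,\pi_T$ are quotient maps) and that it is the hypothesis ``$f$ is a quotient map'', not merely ``continuous and surjective'', that makes $\pi_T \circ f$ a quotient map.
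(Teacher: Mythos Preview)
Your proof is correct and is essentially the same as the paper's: both use the commutative square $\pi_T\circ f=\pi_0(f)\circ\pi_W$ together with the fact that $\pi_W$, $\pi_T$, and $f$ are quotient maps. The paper carries out the preimage chase inline (take $U_0\subset\pi_0(T)$ with $\pi_0(f)^{-1}(U_0)$ open, pull back along $\pi_W$ and use $f$ a quotient map to conclude $\pi_T^{-1}(U_0)$ is open), whereas you abstract this into the general lemma ``if $g\circ p$ is a quotient map and $p$ is a quotient map then $g$ is a quotient map''---but the content is identical.
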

\begin{proof}
Let $U_0 \subset \pi_0(T)$ be such that $\pi_0(f)^{-1}(U_0)$ is open. We want to show that $U_0$ is open. It is equivalent to checking that $U=\pi^{-1}(U_0)$ is open. But $f^{-1}(U) = \pi^{-1}(\pi_0(f)^{-1}(U_0))$ and so is open. As $f$ is a quotient map, $U$ is open as well, which finishes the proof.
\end{proof}

\begin{lemma}\label{morphism-as-in-hes-over-connected-is-conn}
Let $f: X \rarr S$ be a universally open and surjective morphism of schemes (e.g. $f$ faithfully flat locally of finite presentation) with geometrically connected fibres. Then for any morphism of schemes $\tS \rarr S$, the map induced on $\pi_0$'s by the base-change of $f$ to $\tS$ $$\pi_0(f):\pi_0(\tX) \rarr \pi_0(\tS)$$ is a homeomorphism.
\end{lemma}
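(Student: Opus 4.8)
The plan is to reduce at once to a purely topological statement about the base-changed morphism, and then establish separately that $\pi_0(\tf)$ is a quotient map and that it is a bijection. Write $\tf : \tX = X\times_S\tS \rarr \tS$ for the base change of $f$ along $\tS \rarr S$. The three properties we need are all stable under base change: $\tf$ is open because $f$ is \emph{universally} open; $\tf$ is surjective because the fibre of $\tf$ over $\tilde s \in \tS$ lying over $s\in S$ is $X_s\times_{\kappa(s)}\kappa(\tilde s)$, which is nonempty as $X_s\neq\emptyset$; and the fibres of $\tf$ are geometrically connected over the residue fields of $\tS$, since geometric connectedness of a fibre is preserved by the field extension $\kappa(s)\rarr\kappa(\tilde s)$. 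In particular the fibres of $\tf$ are connected. So it suffices to prove: if $g:W\rarr T$ is an open, surjective, continuous map of topological spaces with connected fibres, then $\pi_0(g)$ is a homeomorphism.

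First I would observe that such a $g$ is a topological quotient map: if $g^{-1}(U)$ is open then $U = g(g^{-1}(U))$ is open by openness and surjectivity of $g$. Hence by Lemma~\ref{openonpi0'}, $\pi_0(g):\pi_0(W)\rarr\pi_0(T)$ is again a topological quotient map, in particular continuous and surjective. Since a bijective topological quotient map is automatically a homeomorphism, it remains only to prove that $\pi_0(g)$ is injective.

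For injectivity it is enough to show that $g^{-1}(C)$ is connected for every connected component $C$ of $T$: if $g(w_1)$ and $g(w_2)$ lie in the same component $C$, then $w_1,w_2\in g^{-1}(C)$, and connectedness of $g^{-1}(C)$ puts them in the same component of $W$. So the key step is the following sublemma: for $g$ as above and any connected subset $T'\subseteq T$, the preimage $g^{-1}(T')$ is connected. I would argue by contradiction: suppose $g^{-1}(T') = A\sqcup B$ with $A,B$ open in $g^{-1}(T')$ and both nonempty. Each fibre $g^{-1}(t)$, $t\in T'$, is connected and contained in $g^{-1}(T')$, hence lies entirely in $A$ or entirely in $B$; set $T'_A = \{t\in T' : g^{-1}(t)\subseteq A\}$ and $T'_B$ similarly, so $T' = T'_A\sqcup T'_B$ (disjoint by the previous sentence, covering by surjectivity). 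Writing $A = A'\cap g^{-1}(T')$ with $A'$ open in $W$, one checks $g(A')\cap T' = T'_A$: if $t'\in g(A')\cap T'$ then $g^{-1}(t')$ meets $A'$, hence meets $A$, hence is contained in $A$, so $t'\in T'_A$; the reverse inclusion is immediate since $g^{-1}(t')\subseteq A\subseteq A'$ is nonempty. As $g$ is open, $g(A')\cap T'$ is open in $T'$, so $T'_A$ is open in $T'$, and likewise $T'_B$; connectedness of $T'$ then forces $A=\emptyset$ or $B=\emptyset$, a contradiction. Applying the sublemma with $T'=C$ a connected component finishes the injectivity, and hence the proof after specializing $g=\tf$.

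I expect the sublemma to be the only delicate point, and the main obstacle to a naive approach: the restriction $g^{-1}(T')\rarr T'$ need not be an open map (restricting an open map to the preimage of a non-open subset can fail to stay open), so one cannot simply quote ``an open surjection with connected fibres over a connected base has connected total space''. The remedy is to push the clopen partition of $g^{-1}(T')$ down using open sets of the \emph{whole} space $W$ rather than of $g^{-1}(T')$, which is exactly what the argument above does. Everything else is formal.
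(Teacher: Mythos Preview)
Your proof is correct and follows the same overall shape as the paper's: reduce by base change (so that the three hypotheses persist), invoke Lemma~\ref{openonpi0'} to get that $\pi_0$ of an open surjection is a quotient map, and then establish bijectivity by showing the preimage of each connected component of the base is connected. The one genuine difference is in this last step. The paper does not argue purely topologically as you do; instead, having reduced to $\tS = S$, it base-changes $f$ a second time along the inclusion of a connected component $t \hookrightarrow S$ viewed as a closed subscheme. Because $f$ is \emph{universally} open, the resulting $f_t : X_t \rarr t$ is again open, and one is reduced to the easy fact that an open surjection with connected fibres over a connected base has connected total space. Your route trades this second appeal to universal openness for a direct topological sublemma (preimages of arbitrary connected subsets under an open surjection with connected fibres are connected), which is slightly more general and makes explicit the subtlety you flag---that naive restriction to a non-open subset need not preserve openness---whereas the paper's argument quietly sidesteps that issue via the scheme structure.
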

\begin{proof}
We can obviously assume $\tS = S$. Let $t \in \pi_0(S)$. We can see it as a closed subscheme $t \monorarr S$ and obtain $f_t: X_t \rarr t$ via base-change. Let us first show that $\pi_0(f)$ is bijective. As $f$ is surjective, $\pi_0(f)$ is surjective as well and thus we only need to show that $X_t$ is connected. But this follows easily from the fact that $t$ is connected, $f_t$ is open, surjective and has connected fibres. Thus, $\pi_0(f)$ is a continuous bijection and by Lm. \ref{openonpi0'}, it is a homeomorphism.
\end{proof}

\subsection{Proof of Theorem \ref{steinforgeomcov}}\label{section:proof}
Let us start with two results that essentially give the homotopy exact sequence in the case $S$ equal to a spectrum of a strictly henselian ring. Recall that in this case $\pipet(S)=\piet(S)=1$ by \cite[Lemma 7.3.8]{BhattScholze}.
\begin{proposition}\label{forclosedfibre}
Let $S$ be a spectrum of a strictly henselian noetherian ring, let $X \rarr S$ be proper with $X$ connected and let $Y \in \rmCov_X$ be connected. Let $\bs$ be a geometric point over the closed point $s$ of $S$. Then the geometric fibre $Y_{\bs}$ is connected. In other words, the morphism
\begin{displaymath}
\pipet(X_{\bs}) \rarr \pipet(X)
\end{displaymath}
has dense image.

As a result, for any $Y \in \rmCov_X$, the natural map $\pi_0(Y_{\bs}) \rarr \pi_0(Y)$ is a bijection (of discrete sets).
\end{proposition}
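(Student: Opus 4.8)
The plan is to prove the equivalent statement for connected coverings (the ``as a result'' clause follows by splitting an arbitrary $Y$ into connected components), reduce to $X$ reduced, and then exhaust $Y$ by closed subschemes that are proper over $S$, where one can invoke proper base change for $\pi_0$ over a strictly henselian local base. More precisely: the restriction functor $\rmCov_X\rarr\rmCov_{X_{\bs}}$, $Y\mapsto Y_{\bs}:=Y\times_X X_{\bs}$, is the one corresponding to $\pipet(X_{\bs})\rarr\pipet(X)$, so by Proposition~\ref{dictionary} showing that this map has dense image amounts to showing that a connected $Y\in\rmCov_X$ has $Y_{\bs}$ connected. First I would reduce to $X$ reduced: replacing $X$ by $X_{\rmred}$ and $Y$ by $Y\times_X X_{\rmred}$ changes nothing topologically (these maps are universal homeomorphisms), and $X_{\rmred}$ is still connected, proper over $S$, and topologically noetherian (being of finite type over the noetherian $S$).

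The geometric input I would use is: \emph{if $W$ is proper over $S=\Spec(R)$, then $\pi_0(W_{\bs})\rarr\pi_0(W)$ is a bijection.} I would obtain this from the Stein factorization $W\rarr S'\rarr S$, where $S'$ is the relative spectrum of $f_*\calO_W$ (a finite $R$-algebra, by properness and noetherianity): the morphism $W\rarr S'$ has geometrically connected nonempty fibres, so both $\pi_0(W_{\bs})\rarr\pi_0(S'_{\bs})$ and $\pi_0(W)\rarr\pi_0(S')$ are bijections; and $f_*\calO_W$ is a finite product of henselian local $R$-algebras ($R$ being henselian local), each of whose local factors stays local after $-\otimes_R\kappa(\bs)$ because $k(s)$ is separably closed, so $\pi_0(S'_{\bs})\rarr\pi_0(S')$ is a bijection as well.

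Next, given a connected $Y\in\rmCov_X$, I would apply Lemma~\ref{wrtngassum} to obtain open subschemes $U_n\subset Y$ and closed subschemes $Z_n\hookrightarrow Y$ ($n\geq 0$) with $Z_n\subset U_n\subset Z_{n+1}\subset\cdots$, $\bigcup_n U_n=Y$, each $Z_n$ nonempty, connected and of finite type over $X$. The crucial observation is that each $Z_n$ is proper over $S$: the composite $Z_n\hookrightarrow Y\rarr X$ is of finite type, separated, and satisfies the valuative criterion of properness (it is a closed immersion followed by the \'etale morphism $Y\rarr X$, which satisfies that criterion as $Y$ is a geometric covering), hence is proper, and $X\rarr S$ is proper. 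By the input of the previous paragraph $(Z_n)_{\bs}$ is connected and nonempty (its $\pi_0$ is in bijection with the one-point set $\pi_0(Z_n)$); the $(Z_n)_{\bs}$ are nested closed subschemes of $Y_{\bs}$ with union $Y_{\bs}$ (since $\bigcup_n Z_n=Y$ topologically, as $U_n\subset Z_{n+1}$), so $Y_{\bs}$ is an increasing union of connected subsets through a common point of $(Z_1)_{\bs}$, hence connected.

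Finally, for the last assertion I would take $Y\in\rmCov_X$ arbitrary; being locally topologically noetherian it is locally connected, so $Y=\bigsqcup_{c\in\pi_0(Y)}Y_c$ with each $Y_c$ clopen, connected, and again a geometric covering of $X$. Each $Y_c\rarr X$ is surjective: its image is open ($Y_c\rarr X$ is \'etale) and stable under specialization (valuative criterion of properness), hence clopen, hence all of the connected $X$; since $X_{\bs}\neq\emptyset$ (the proper morphism $X\rarr S$ has image containing the closed point $s$), this gives $(Y_c)_{\bs}\neq\emptyset$, and $(Y_c)_{\bs}$ is connected by the case already treated. Hence the $(Y_c)_{\bs}$ are precisely the connected components of $Y_{\bs}=\bigsqcup_c(Y_c)_{\bs}$, so $\pi_0(Y_{\bs})\rarr\pi_0(Y)$ is a bijection. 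I expect the main obstacle to be establishing the proper-base-change statement for $\pi_0$ over a strictly henselian local base, together with checking that the exhausting subschemes $Z_n$ are proper over $S$; once these are in place, Lemma~\ref{wrtngassum} absorbs the difficulty caused by $Y$ not being quasi-compact.
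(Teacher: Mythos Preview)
Your proof is correct and follows essentially the same strategy as the paper: exhaust $Y$ by the proper-over-$S$ closed subschemes $Z_n$ from Lemma~\ref{wrtngassum} and invoke the bijection $\pi_0(W_{\bs})\simeq\pi_0(W)$ for $W$ proper over a strictly henselian base. The only differences are cosmetic---the paper argues by contradiction and cites \cite[Lm.~0A3S]{StacksProject} for the proper-base-change step, while you argue directly and supply the Stein-factorization argument yourself; you also make explicit the reduction to $X$ reduced that Lemma~\ref{wrtngassum} formally requires.
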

\begin{proof}
As the residue field $\kappa(s)$ is separably closed, it is enough to show that $Y_s$ is connected (\cite[Tag 0387]{StacksProject}). Assume the contrary and write $Y_s = W_1 \sqcup W_2$, where $W_1,W_2$ are clopen subsets of $Y_s$. Apply Lm. \ref{wrtngassum} to $Y$ and produce a sequence of connected quasi-compact closed $Z_n \subset Y$ such that $\bigcup_n Z_n = Y$. There exist $N \geq 0$ such that $Z_N\cap W_1 \neq \emptyset$ and $Z_N \cap W_2 \neq \emptyset$. Thus, the fibre $Z_{N,s}$ is not connected. But $Z_N$ is of finite type over $X$ and satisfies the valuative criterion of properness (as $Y \in \rmCov_X$). Thus, $Z_N$ is proper over $X$ and so also over $S$. But for such scheme it is known (by the special case of the proper base-change theorem: see e.g. \cite[Lm. 0A3S]{StacksProject} or \cite[Lm. 0A0B]{StacksProject}) that the induced map $\pi_0(Z_{N,s}) \rarr \pi_0(Z_N)$ is bijective. This is a contradiction.
\end{proof}

\begin{lemma}\label{strhens-generic}
Let $S$ be the spectrum of a strictly henselian dvr $R$ and let $X \rarr S$ be a morphism as in h.e.s. Let $\bxi$ be a geometric point over the generic point $\xi$ of $S$. Then the morphism
\begin{displaymath}
\pipet(X_{\bxi}) \rarr \pipet(X)
\end{displaymath}
has dense image. In other words, for a connected $Y \in \rmCov_X$, $Y_{\bxi}$ remains connected. As a result, for any $Y \in \rmCov_X$, the natural map $\pi_0(Y_{\bxi}) \rarr \pi_0(Y)$ is a bijection (of discrete sets). 
\end{lemma}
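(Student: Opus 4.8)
The plan is to reduce, via base change along finite separable extensions of $K:=\kappa(\xi)$, to the closed-fibre statement of Proposition \ref{forclosedfibre}, and then to descend from $K^{\rmsep}$ to a finite level using the filtration of Lemma \ref{wrtngassum} together with the finiteness in Lemma \ref{finitegeomconncomp}. Let $\bs$ be a geometric point over the closed point $s$ of $S$. Two preliminary remarks: since $Y\rightarrow X$ is étale, $X\rightarrow S$ is flat with geometrically reduced fibres, and $R$ is reduced, the scheme $Y$ is reduced and flat over $R$ and the special fibre $Y_s$ is geometrically reduced over $\kappa(s)$ (being étale over the geometrically reduced $X_s$); and since $\mathrm{Spec}(\bar K)\rightarrow \mathrm{Spec}(K^{\rmsep})$ is a universal homeomorphism, it suffices to prove that $Y_{K^{\rmsep}}:=Y_\xi\times_K K^{\rmsep}$ is connected whenever $Y$ is connected. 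The last assertion of the lemma then follows by writing a general $Y\in\rmCov_X$ as the disjoint union of its (open) connected components, each of which dominates $\mathrm{Spec}(R)$ by flatness.

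\textbf{Step 1: $Y_\xi\times_K L$ is connected for every finite separable $L/K$} (including $L=K$). Let $R_L$ be the integral closure of $R$ in $L$. By Krull--Akizuki it is a one-dimensional Noetherian local domain, hence a DVR; it is henselian (local and integral over the henselian $R$), and its residue field is a finite, hence purely inseparable, extension of the separably closed $\kappa(s)$, so $R_L$ is a strictly henselian DVR. Write $s_L$ for its closed point and $\bs_L$ for a geometric point over it. Put $X_L:=X\times_R R_L$, which is proper over $\mathrm{Spec}(R_L)$ and connected, and $\mathcal Y_L:=Y\times_X X_L=Y\times_R R_L\in\rmCov_{X_L}$. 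Applying Proposition \ref{forclosedfibre} to $X_L\rightarrow \mathrm{Spec}(R_L)$ gives a bijection $\pi_0\big((\mathcal Y_L)_{\bs_L}\big)\simeq \pi_0(\mathcal Y_L)$; but $(\mathcal Y_L)_{\bs_L}$ is canonically $Y_s\otimes_{\kappa(s)}\overline{\kappa(s)}=Y_{\bs}$, whose $\pi_0$ equals $\pi_0(Y)=\{*\}$ by Proposition \ref{forclosedfibre} applied to $X\rightarrow S$. Hence $\mathcal Y_L$ is connected; it is flat over the DVR $R_L$ and its special fibre is $Y_s\otimes_{\kappa(s)}\kappa(s_L)$, which is reduced because $Y_s$ is geometrically reduced. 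So Lemma \ref{reducedspecialimpliesconnectedgeneric} applies and shows that the generic fibre $(\mathcal Y_L)_{\xi_L}=Y_\xi\times_K L$ is connected. (The case $L=K$ gives in particular that $Y_\xi$ is connected.)

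\textbf{Step 2: descent to a finite level.} By Lemma \ref{wrtngassum} write $Y=\bigcup_n U_n$ with the $U_n$ quasi-compact, open and connected. Then $Y_{K^{\rmsep}}=\bigcup_n (U_n)_{K^{\rmsep}}$ is an increasing union of quasi-compact opens, and since $Y_{K^{\rmsep}}$ is locally Noetherian, hence locally connected, one has $\pi_0(Y_{K^{\rmsep}})=\varinjlim_n \pi_0\big((U_n)_{K^{\rmsep}}\big)$. Each $(U_n)_\xi$ is a nonempty $K$-scheme of finite type, so by Lemma \ref{finitegeomconncomp} the set $\pi_0\big((U_n)_{K^{\rmsep}}\big)$ is finite with continuous $\Gal_K$-action; hence the colimit $\pi_0(Y_{K^{\rmsep}})$ carries a continuous $\Gal_K$-action, and it is moreover transitive because $Y_\xi$ is connected (Step 1), so $\pi_0(Y_{K^{\rmsep}})$ is finite ($\Gal_K$ being compact). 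The action then factors through a finite quotient $\Gal(M/K)$; choosing a finite separable $L\supseteq M$, the group $\Gal(K^{\rmsep}/L)$ acts trivially, whence $\pi_0(Y_\xi\times_K L)=\pi_0(Y_{K^{\rmsep}})/\Gal(K^{\rmsep}/L)=\pi_0(Y_{K^{\rmsep}})$ (using $\pi_0(Y_\xi\times_K L)=\varinjlim_n \pi_0((U_n)_\xi\times_K L)$ and the usual Galois descent of $\pi_0$ on the quasi-compact schemes $(U_n)_{K^{\rmsep}}$). By Step 1, $Y_\xi\times_K L$ is connected, so $\pi_0(Y_{K^{\rmsep}})$ is a singleton, i.e. $Y_{K^{\rmsep}}$, and hence $Y_{\bxi}$, is connected.

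\textbf{Main obstacle.} The essential difficulty is the non-quasi-compactness of $Y$: one cannot simply assert $\pi_0(Y_{\bxi})=\varprojlim_L \pi_0(Y_\xi\times_K L)$, which is why the argument is split into a connectedness input obtained from $Y$ itself (flat over $R$, reduced special fibre, Lemma \ref{reducedspecialimpliesconnectedgeneric}) and a finiteness/descent input obtained after restricting to the quasi-compact opens $U_n$ (Lemmas \ref{wrtngassum} and \ref{finitegeomconncomp}). The most delicate bookkeeping is the identity $\pi_0(Y_{K^{\rmsep}})=\varinjlim_n\pi_0((U_n)_{K^{\rmsep}})$ for an increasing union of quasi-compact opens in a locally connected scheme, and the compatibility of Galois descent of $\pi_0$ with this colimit; the remaining verifications (that $R_L$ is a strictly henselian DVR, the fibre identification $(\mathcal Y_L)_{\bs_L}\cong Y_{\bs}$, continuity of the Galois actions) are routine.
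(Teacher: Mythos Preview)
Your proof is correct and the core argument (your Step~1) coincides with the paper's: base change to the integral closure of $R$ in a finite separable $L/K$, apply Proposition~\ref{forclosedfibre} to the special fibre, and then invoke Lemma~\ref{reducedspecialimpliesconnectedgeneric} to conclude that $Y_{\xi,L}$ is connected.

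The only difference is in Step~2, where you work harder than necessary. The paper applies Lemma~\ref{finitegeomconncomp} directly to $Y_\xi$: once you know $Y_\xi$ is connected (your Step~1 with $L=K$) and has an $L'$-point with $L'/K$ finite (it is nonempty and locally of finite type over $K$), Lemma~\ref{finitegeomconncomp} immediately furnishes a single finite separable $L/K$ such that every connected component of $Y_{\xi,L}$ is geometrically connected; combining this with Step~1 finishes the proof in one line. The hypotheses of Lemma~\ref{finitegeomconncomp} do \emph{not} include quasi-compactness or finite type, so the non-quasi-compactness of $Y_\xi$ is already handled there, and your filtration $Y=\bigcup_n U_n$, the colimit description $\pi_0(Y_{K^{\rmsep}})=\varinjlim_n\pi_0((U_n)_{K^{\rmsep}})$, and the compactness-of-$\Gal_K$ argument are all superfluous (though not wrong). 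In short: same strategy, but you reprove inside Step~2 a special case of what Lemma~\ref{finitegeomconncomp} already gives.
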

\begin{proof}
Denote by $s$ the closed point of $S$.
Let $Y \in \rmCov_X$ be connected. Using Prop. \ref{forclosedfibre}, we know that $Y_s$ is connected. The scheme $Y_\xi$ is locally of finite type over $K=\kappa(\xi)$. Thus, it has an $L'$-point with $L'/K$ a finite field extension. Applying Lm. \ref{finitegeomconncomp}, we get that there exists a finite separable extension $L/K$ such that $Y_{\xi,L}$ has a finite number of connected components and each of them is geometrically connected. Let $R'$ be an integral closure of $R$ in $L$. It is a local (as $R$ is henselian) algebra finite over $R$ (as the extension $L/K$ was separable). $R'$ is thus a dvr. Let $\pi'$ be its uniformizer. The field $k'=R'/\pi'$ is a finite (purely inseparable) extension of $R/\pi=k$. Thus, we can assume $k' \subset \kappa(\bs)$. Thus, we can see $\bs$ as a geometric point of $\Spec(R')$ lying over the special point $s'$. Denoting $Y' = Y \times_S \Spec(R')$, we have $Y'_{\bs} = Y_{\bs}$ and so it is reduced and connected. We conclude that the fibre $Y'_{s'}$ is connected and reduced as well (because $Y'_{\bs} \rarr Y'_{s'}$ is faithfully flat). Thus, also $Y'$ is connected (this is clear when thinking of $Y'_{s'}$ and $Y'$ as elements of $\pipet(X'_{s'})-\rmSets$ and $\pipet(X')-\rmSets$). By Lm. \ref{reducedspecialimpliesconnectedgeneric} we conclude that the generic fibre $Y_{\rmFrac(R')}$ is connected. But $Y'_{\rmFrac(R')} = Y' \times_{\Spec(R')}\Spec(L) = Y \times_{\Spec(R)}\Spec(L) = Y_{\xi,L}$. Combining it with what we observed at the beginning of the proof, we conclude that $Y_{\xi,L}$ is geometrically connected. This finishes the proof.
\end{proof}

\begin{theorem}\label{bijectiononfibres}
Let $S$ be a connected noetherian scheme. Let $X \rarr S$ be as in h.e.s. Let $\bxi$ and $\bs$ be two geometric points on $S$ with images $\xi, s \in S$. Then, for any $Y \in \rmCov_X$, there is a bijection
\begin{displaymath}
\pi_0(Y_{\bxi}) \simeq \pi_0(Y_{\bs}).
\end{displaymath}
It depends on the choice of a "path" between $\bs$ and $\bxi$, i.e. chain of maps from strictly henselian dvrs (see the proof).
When $S$ is the spectrum of a strictly henselian dvr and $\bxi$ and $\bs$ lie over the special and the generic point, the bijection is the obvious one given by combining Lm. \ref{forclosedfibre} with Lm. \ref{strhens-generic}.

\end{theorem}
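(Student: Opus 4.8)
The plan is to reduce the statement, by a standard connectedness and "path-lifting" argument, to the case of a strictly henselian trait, which is already handled by combining Lemma~\ref{forclosedfibre} and Lemma~\ref{strhens-generic}. First I would fix $Y \in \rmCov_X$ and consider the functor $s' \mapsto \pi_0(Y_{\bs'})$ on geometric points of $S$. The key observation is that the conclusions of Prop.~\ref{forclosedfibre} and Lm.~\ref{strhens-generic} say precisely that, whenever $S' = \Spec(R)$ for $R$ a strictly henselian DVR and $X' \to S'$ is obtained by base change, the specialization maps $\pi_0(Y_{\bs'}) \to \pi_0(Y_{S'})$ and $\pi_0(Y_{\bxi'}) \to \pi_0(Y_{S'})$ are both bijections; composing one with the inverse of the other gives a canonical bijection $\pi_0(Y_{\bxi'}) \simeq \pi_0(Y_{\bs'})$ attached to such a trait. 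So the whole point is to connect an arbitrary pair of geometric points $\bs, \bxi$ on $S$ by a chain of such traits.

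The mechanism for this is that on a connected, locally noetherian scheme any two points can be joined by a finite chain of specializations and generizations through noetherian local rings, and after strict henselization each such step is a strictly henselian DVR (or, more precisely, a $1$-dimensional piece thereof; one cuts down to a trait inside $\Spec(\calO_{S,x}^{\mathrm{sh}})$ using that a local noetherian domain of dimension $\geq 1$ contains a height-one prime, hence dominates a DVR after normalization --- here one uses that $S$ is locally noetherian so these local rings are noetherian and excellent enough for this, or alternatively one first replaces $S$ by a suitable affine neighbourhood and uses that the underlying space is connected and the relevant local rings Nagata). Concretely: choose points $x_0 = s, x_1, \dots, x_n = \xi$ in $S$ so that consecutive ones lie in a common irreducible closed subset $V(\frakp_i) \subset S$ with one specializing to the other; base-change $X \to S$ along $\Spec \calO_{V(\frakp_i), x}^{\mathrm{sh}} \to S$ for the appropriate $x$, cut out a strictly henselian trait dominating the two relevant points, and apply the trait case. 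Note $X \to S$ stays "as in h.e.s." under any base change since flatness, properness, finite presentation and the fibre conditions are all stable under base change, so each step is legitimate. The composite of the bijections along the chain is the desired $\pi_0(Y_{\bxi}) \simeq \pi_0(Y_{\bs})$, manifestly depending only on the chosen chain (the "path"), and by construction it reduces to the obvious one in the trait case.

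The main obstacle, and the place requiring genuine care rather than bookkeeping, is arranging the trait at each step: a single specialization $x \rightsquigarrow y$ in a noetherian scheme need not be realized by a DVR (the local ring $\calO_{\overline{\{x\}}, y}$ may have dimension $>1$), so one must interpolate further points to reduce to codimension-one specializations, and then pass to the normalization of the (henselized) local ring --- which is where the Nagata hypothesis on $S$ is genuinely used, to ensure the normalization is finite and the resulting ring is a product of strictly henselian DVRs. Once the local rings are known to be excellent/Nagata and one has reduced to height-one primes, extracting the DVR and checking that $Y_{S'} \to S'$ still has connected total space (so that Prop.~\ref{forclosedfibre} and Lm.~\ref{strhens-generic} apply to $Y_{S'}$, not just to $Y$) is routine; the latter connectedness follows because $X_{S'} \to S'$ has geometrically connected fibres over a connected base, hence $X_{S'}$ is connected, and then any $Y' \in \rmCov_{X_{S'}}$ that is a base change of the connected $Y$ has $\pi_0(Y') \to \pi_0(Y_{\bs'})$ a bijection by the very lemmas we are invoking, so each connected component of $Y_{S'}$ is what we track. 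I would also remark explicitly that independence of the bijection on the path is \emph{not} claimed (and indeed fails in general, just as for the étale fundamental group), which is why the statement only asserts existence.
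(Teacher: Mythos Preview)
Your overall strategy matches the paper's: reduce to a chain of specializations/generizations on the connected noetherian base, and for each step produce a strictly henselian trait so that Prop.~\ref{forclosedfibre} and Lm.~\ref{strhens-generic} apply. The composite bijection then depends on the chosen chain, exactly as the statement says.

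Where you diverge is in how you manufacture the trait. You assert that a specialization $x \rightsquigarrow y$ ``need not be realized by a DVR'' when the relevant local ring has dimension $>1$, and therefore propose interpolating extra points to reduce to codimension-one steps, then normalizing and invoking the Nagata hypothesis. This is both unnecessary and formally problematic: the theorem assumes only that $S$ is noetherian, not Nagata, so you cannot appeal to finiteness of normalization here. The paper avoids all of this by citing \cite[Tag 054F]{StacksProject}: for any specialization $\xi \rightsquigarrow s$ in a (locally) noetherian scheme there exists a DVR $R$ and a map $\Spec(R)\to S$ sending the generic point to $\xi$ and the closed point to $s$. One then strictly henselizes $R$ (which stays a DVR by \cite[Tag 0AP3]{StacksProject}) and applies the two lemmas directly. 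So your ``main obstacle'' paragraph can be deleted and replaced by a one-line citation; no interpolation, no normalization, no Nagata. Your remarks about checking that the base-changed $X_{S'}$ stays connected and that the lemmas apply to $\rmCov_{X_{S'}}$ are fine but also routine, since ``as in h.e.s.'' is stable under base change and the $\pi_0$-statements in Prop.~\ref{forclosedfibre} and Lm.~\ref{strhens-generic} are already phrased for arbitrary $Y\in\rmCov_X$, not just connected ones.
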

\begin{proof} As $S$ is connected and noetherian, we can join $s$ with $\xi$ by a finite sequence of specializations and generizations of points on $S$ (every point lies on one of the finitely many irreducible components $Z_1, \ldots, Z_m$ of $S$ and within a fixed irreducible component every point is a specialization of the generic point. It follows that the set of points reachable via sequence of specializations and generizations from a given point is a union of some irreducible components and thus closed. There is only finitely many of such "path components", and thus it is also open). Thus, we can and do reduce to the case where $s$ is a specialization of $\xi$. By \cite[Tag 054F]{StacksProject} we can find a dvr $R$ and a morphism $\Spec(R) \rarr S$ such that the generic point of $\Spec(R)$ maps to $\xi$ and the special point maps to $s$. The strict henselization $R^{sh}$ of $R$ is a strictly henselian dvr by \cite[Tag 0AP3]{StacksProject}. Let $\xi', s'$ be the generic and the special point of $\Spec(R^{sh})$ respectively and let $\bxi'$ and $\bs'$ be some geometric points over them. By Lm. \ref{strhens-generic} and Prop. \ref{forclosedfibre}, we conclude that  $\pi_0(Y_{\bs'}) \simeq \pi_0(Y_{\Spec(R^{sh})}) \simeq \pi_0(Y_{\bxi'})$. Choosing  geometric points $\bs''$ and $\bxi''$ on $S$ such that $\bs''$ factors both through $\bs$ and $\bs'$ and $\bxi''$ factors both through $\bxi$ and $\bxi'$ and using the fact that the base-change of a connected scheme over an algebraically closed field to another algebraically closed field remains connected, we finish the proof.
\end{proof}

\begin{corollary}\label{generalstrhenscase}
Let $S$ be the spectrum a strictly henselian noetherian ring and let $X\rarr S$ be as in h.e.s. Let $\bs$ be any geometric point on $S$. Then the morphism
\begin{displaymath}
\pipet(X_{\bs}) \rarr \pipet(X)
\end{displaymath}
has dense image. In other words, for a connected $Y \in \rmCov_X$, the base-change $Y_{\bs} \in \rmCov_{X_{\bs}}$ remains connected. As a result, for any
$Y \in \rmCov_X$, the natural map $\pi_0(Y_{\bs}) \rarr \pi_0(Y)$ is a bijection (of discrete sets).
\end{corollary}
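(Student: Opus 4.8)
The plan is to reduce this to the two special cases already treated and glue them using the path-independence statement of Theorem~\ref{bijectiononfibres}. First observe that the three assertions of the Corollary are equivalent in the usual way. A strictly henselian noetherian ring is in particular a noetherian local ring, so $S$ is connected and noetherian, which is what is needed to invoke Theorem~\ref{bijectiononfibres}. By Prop.~\ref{dictionary}, the density of $\pipet(X_{\bs}) \rarr \pipet(X)$ is equivalent to the statement that $Y_{\bs}$ is connected for every connected $Y \in \rmCov_X$. Moreover, via the equivalence $\rmCov_X \simeq \pipet(X)-\rmSets$ — under which every object is a disjoint union of connected ones, since every $G$-set is a disjoint union of orbits, and the (scheme-theoretic) disjoint union corresponds to the coproduct of $G$-sets — the $\pi_0$-bijection statement for an arbitrary $Y$ follows from the connectedness statement applied to each connected summand of $Y$. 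Hence it suffices to prove: for $Y \in \rmCov_X$ connected, the geometric fibre $Y_{\bs}$ is connected.

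For this, let $s_0 \in S$ be the (unique) closed point and choose a geometric point $\overline{s_0}$ over it. By Proposition~\ref{forclosedfibre}, $Y_{\overline{s_0}}$ is connected, i.e.\ $\pi_0(Y_{\overline{s_0}})$ is a singleton. Now apply Theorem~\ref{bijectiononfibres} to the connected noetherian scheme $S$, the morphism $X \rarr S$ (which is as in h.e.s.), the covering $Y$, and the two geometric points $\bs$ and $\overline{s_0}$: it produces a bijection $\pi_0(Y_{\bs}) \simeq \pi_0(Y_{\overline{s_0}})$ (we only need existence of such a bijection, so the dependence on a choice of "path" through strictly henselian dvrs is irrelevant here). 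Therefore $\pi_0(Y_{\bs})$ is a singleton as well, that is, $Y_{\bs}$ is connected. Combined with the reduction of the first paragraph, this gives all three statements of the Corollary.

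There is essentially no real obstacle in this argument: all the substantive work has already been done, in Proposition~\ref{forclosedfibre} (via the exhaustion of $Y$ by the finite-type proper pieces $Z_n$ of Lemma~\ref{wrtngassum} and proper base change) and in Theorem~\ref{bijectiononfibres} (which rests on Lemma~\ref{strhens-generic}, Proposition~\ref{forclosedfibre}, and the travelling argument along strictly henselian dvrs). The only points that deserve an explicit line are the trivial remarks that $\Spec$ of a local ring is connected, so that Theorem~\ref{bijectiononfibres} applies, and that a general $Y \in \rmCov_X$ splits, \emph{as an object of} $\rmCov_X$, into a disjoint union of connected geometric coverings, which is immediate from the equivalence $\rmCov_X \simeq \pipet(X)-\rmSets$.
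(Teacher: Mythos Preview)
Your proof is correct and follows essentially the same approach as the paper: use Proposition~\ref{forclosedfibre} for the closed point and then transport to an arbitrary geometric point via the bijection of Theorem~\ref{bijectiononfibres}. The paper's proof is a two-line version of yours; your additional paragraph making explicit why the three assertions of the Corollary are equivalent (via Prop.~\ref{dictionary} and the decomposition of any $Y\in\rmCov_X$ into connected summands) is accurate and harmless, though the paper simply takes this for granted.
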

\begin{proof}
If $\bs$ lies over the closed point of $S$, then the statement was proven in Prop. \ref{forclosedfibre}. But now Thm. \ref{bijectiononfibres} tells us that the statement holds for any $\bs$.
\end{proof}

Let us also mention a technical lemma used later in the proof.
\begin{lemma}\label{lifting-automorphism-of-pi0}
Let $X$ be a compact topological space. Let $W = \sqcup_i X_i$ be a disjoint union, indexed by $i$, of copies of $X$. Let $g: W \rarr X$ be the obvious structural map. Let $g_i: X_i \subset W \rarr X$ be the structural (iso-)morphism. Let $\phi \in \rmAut_X(W)$ be an automorphism of $W$ \textbf{over $X$}. Then
\begin{enumerate}
\item there exists a decomposition of $W$ into clopen subsets $W_{ij}$ such that:
\begin{itemize}
\item $W_{ij} \subset X_i$,
\item $X_i$ is a sum of finitely many $W_{ij}$,
\item $\phi$ preserves this decomposition, i.e. for each $i,j$, $\phi(W_{ij})=W_{i'j'}$ for some $i', j'$.
\end{itemize}
\item Assume that $X=\pi_0(X')$ for some topological space $X'$. Let $W'= \sqcup_i X_i'$ be the disjoint onion of copies of $X'$. Then $\phi$ lifts uniquely to an automorphism of $W'$ over $X'$, i.e. there exist an automorphism $\phi':W' \rarr W'$ such that $\phi=\pi_0(\phi')$.
\item Assume that $X=\pi_0(X')$ for some scheme $X'$. Let $W'= \sqcup_i X_i'$ be the disjoint onion of copies of $X'$. Then $\phi$ lifts uniquely to an automorphism of $W'$ over $X'$, i.e. there exist an automorphism $\phi':W' \rarr W'$ of $X'$-schemes such that $\phi=\pi_0(\phi')$.
\end{enumerate}
\end{lemma}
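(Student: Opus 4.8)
The plan is to prove part (1) by a purely topological argument over the base $X$, and then deduce parts (2) and (3) by transporting the clopen decomposition produced in (1) along the canonical continuous surjection $X' \to \pi_0(X') = X$ and gluing.

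\emph{Part (1).} Regard $\phi$ fibrewise: over $x \in X$ the fibre $g^{-1}(x)$ is a copy of the index set $I$ and $\phi$ acts on it by a permutation. Since $\phi$ lies over $X$, one has $g \circ \phi|_{X_i} = g_i$ for each $i$, so for every $i \in I$ and every $n \in \bbZ$ the sets $X_i \cap \phi^n(X_j)$, $j \in I$, are clopen and partition $X_i$; as $X_i$ is compact and each $\phi^n(X_j)$ is open, all but finitely many are empty. Let $\mathcal{W}$ be the common refinement, over all $n \in \bbZ$ and $j \in I$, of the clopen partitions $\{\phi^n(X_j)\}_j$ of $W$. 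The $n=0$ factor shows $\mathcal{W}$ refines $\{X_i\}_i$, so each member of $\mathcal{W}$ lies in a unique sheet $X_i$; and since the family $\{\phi^n(X_j)\}_{n,j}$ is stable under $\phi$, the automorphism $\phi$ carries members of $\mathcal{W}$ bijectively onto members of $\mathcal{W}$. Granting that $\mathcal{W}$ meets each sheet $X_i$ in only finitely many members, one relabels these members as $W_{ij}$ and obtains (1).

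\emph{Construction of the lifts in (2) and (3).} Apply (1). For a block $W_{ij} \subseteq X_i$ write $\phi(W_{ij}) = W_{i'j'}$; because $\phi$ is over $X$ the common image $A_{ij} := g_i(W_{ij}) = g_{i'}(W_{i'j'}) \subseteq X$ is clopen and $\phi|_{W_{ij}}$ is exactly the canonical identification $W_{ij} \simeq W_{i'j'}$ obtained by matching both sides with $A_{ij}$ through $g_i$ and $g_{i'}$. In case (2), let $A_{ij}' \subseteq X'$ be the preimage of $A_{ij}$ under $X' \to \pi_0(X') = X$, which is clopen since this map is continuous, and put $W_{ij}' := (g_i')^{-1}(A_{ij}') \subseteq X_i'$, where $g_k' \colon X_k' \to X'$ is the structural iso of $W' = \bigsqcup_i X_i'$; these form a clopen partition of $W'$ refining the sheets, with $\pi_0(W_{ij}') = W_{ij}$ and $X_i' = \bigsqcup_j W_{ij}'$ a finite union. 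Defining $\phi'$ on each $W_{ij}'$ to be the canonical identification $W_{ij}' \simeq W_{i'j'}'$ induced by $A_{ij}'$, $g_i'$ and $g_{i'}'$, the pieces glue (they are defined on a clopen partition) to an element $\phi' \in \rmAut_{X'}(W')$, and $\pi_0(\phi') = \phi$ by construction. Case (3) is identical once ``clopen subset'' is read as ``clopen subscheme'' and ``homeomorphism'' as ``isomorphism of $X'$-schemes'': the preimage in the scheme $X'$ of a clopen subset of $\pi_0(X')$ is an open-and-closed subscheme, and the canonical identifications are scheme isomorphisms over $X'$.

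\emph{Uniqueness.} An automorphism $\psi'$ of $W' = \bigsqcup_i X_i'$ over $X'$ is determined by its restrictions $\psi'|_{X_i'} \colon X_i' \to W'$, each of which, under the structural identification $X_i' \simeq X'$, is a section of $g' \colon W' \to X'$. But a section of $\bigsqcup_i X_i' \to X'$ is the same datum as a locally constant map $|X'| \to I$ recording which sheet the section runs through, hence — such a map factoring through $\pi_0(X')$ — the same datum as a continuous map $\pi_0(X') \to I$, which is precisely what determines the corresponding section of $\pi_0(W') \to \pi_0(X') = X$. Thus $\psi' \mapsto \pi_0(\psi')$ is injective on $\rmAut_{X'}(W')$ (and likewise for schemes), giving uniqueness of the lift. \emph{The main obstacle} is the finiteness statement inside part (1) — equivalently, that the $\phi$-itinerary $n \mapsto (\text{the sheet containing }\phi^n(w))$ takes only finitely many values as $w$ runs over $X_i$; this is the heart of the argument and is where the compactness of $X$ has to be used in an essential way (on a noncompact base the conclusion genuinely fails). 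Everything else is formal manipulation of clopen decompositions together with the elementary description of sections of a disjoint union.
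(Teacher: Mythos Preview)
Your argument for parts (2), (3), and uniqueness is essentially the paper's: once (1) supplies the blocks $W_{ij}$, both you and the paper observe that $\phi|_{W_{ij}} = g_{i'}^{-1}\circ g_i|_{W_{ij}}$, so $\phi$ is encoded by purely combinatorial data (a clopen partition of $X$ together with a permutation of block-indices), and this data transports along $X'\to\pi_0(X')=X$ to produce the lift $\phi'$. Your uniqueness argument via sections of $\sqcup_i X_i'\to X'$ factoring through $\pi_0(X')$ is a bit more explicit than the paper's one-line remark, but it is the same mechanism.

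For part (1), the paper writes ``We omit the proof of the first part'' and says nothing further. You go beyond this by setting up the common refinement $\mathcal{W}$ and correctly isolating the crux (finiteness of $\mathcal{W}$ on each sheet), but you explicitly leave that step unproven. Be aware that this is not a technicality one can dismiss with the word ``compactness'': any $\phi$-stable clopen decomposition subordinate to the sheets must refine $\mathcal{W}$ (since every point of a block has the same sheet-itinerary under the iterates of $\phi$), so (1) is \emph{equivalent} to the finiteness you are granting. In this sense, neither you nor the paper actually proves (1); you have, however, reduced it to the right statement.
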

\begin{proof}
We omit the proof of the first part and only comment on the other two. If $X'$ is a topological space, the first part of the Lemma produces the clopen subsets $W_{ij}$ of $W$. Observe that if $\phi(W_{ij})=W_{i'j'}$, then $\phi|_{W_{ij}}: W_{ij} \rarr W_{i'j'}$ is equal to  $g_i'^{-1}\circ g_i|_{W_{ij}}$. This means that $\phi$ can be described purely combinatorically (in terms of the indexing set $\{i\}$ and clopen decompositions of the topological space $X$). The crucial thing here is that we assume $\phi$ to respect the structural morphism $W \rarr X$.
Taking the preimages $W_{ij}'$ of $W_{ij}$ via the projection $W' \rarr \pi_0(W')=W$, we get a clopen decomposition of $W'$. The combinatorial description of $\phi$ can be lifted in an obvious (and unique) way to an automorphism of $W'$. Moreover, for $X'$ a scheme, $\phi'$ will be an automorphism of a scheme (i.e. we get a map of the structure sheaves). This is because if $\phi'$ maps a clopen subset $W_1' \subset X_1'$ homeomorphically onto a clopen subset $W_2' \subset X_2'$, then denoting $g_1': X_1' \simeq X'$,  $g_2': X_2' \simeq X'$ (the structural isomorphisms), $\phi'|_{W_1'}$ is given by $g_2^{'-1} \circ g_1'|_{W_1'}$.
\end{proof}

\subsubsection*{Proof of Theorem \ref{steinforgeomcov}:}
\begin{proof}
Let us start with the proof of uniqueness. Let $\bx$ be a geometric point on $X$ and $\bs$ its image on $S$. Let $Y \in \rmCov_X$ be connected. Given two connected $T_i \in \rmCov_S$ and maps $g_i: Y \rarr T_i$ over $X \rarr S$ that have geometrically connected fibres, we easily see that the maps $g_i$ induce bijections $b_i: \pi_0(Y_{\bs}) \rarr (T_i)_{\bs}$. Let $"\phi_{\bs}": (T_1)_{\bs} \stackrel{\simeq}{\rarr} (T_2)_{\bs}$ be the bijection given by $b_2 \circ b_1^{-1}$. We have to check, that the bijection $"\phi_{\bs}"$ is a map of $\pipet(S,\bs)-\rmSets$, i.e. that it is $\pipet(S,\bs)$-equivariant. It is easy to check that the following diagram of $\pipet(X,\bx)-\rmSets$ is commutative
\begin{center}
\begin{tikzpicture}
\matrix(a)[matrix of math nodes,
row sep=1em, column sep=4.5em,
text height=0.75ex, text depth=0.25ex]
{  &  (T_1)_{\bs}  \\ Y_{\bx} &   \\  & (T_2)_{\bs} \\};

\path[->>] (a-2-1) edge (a-1-2);
\path[->>] (a-2-1) edge (a-3-2);
\path[->] (a-1-2) edge node[right]{$"\phi_{\bs}"$} (a-3-2); .
\end{tikzpicture}
\end{center}
Thus, $"\phi_{\bs}"$ is $\pipet(X,\bx)$-equivariant and so also $\pipet(S,\bs)$-equivariant, as $\pipet(X,\bx) \rarr \pipet(S,\bs)$ has dense image (by the part of Thm. \ref{homotopy-exact-general-base} that was already proven). The equivalence of categories $\rmCov_S \simeq \pipet(S,\bs)-\rmSets$ gives us $\phi$. This finishes the proof of uniqueness. Let us proceed with the proof of existence.

Reduction of the proof of existence to the case $S$ - normal:\\
Let $S^\nu \rarr S$ be the normalization morphism. It is finite by the fact that $S$ is Nagata. Thus, it is a morphism of effective descent for $\rmCov_S$ by Fact \ref{properdescent}. Assume that the statement of the theorem holds for normal schemes. Let $Y \in \rmCov_X$ be connected. Denote by $Y'$ and $X'$ the base-changes of $Y$ and $X$ to $S^\nu$ (we do not denote it $Y^{\nu}$ and $X^{\nu}$  to avoid the confusion with the normalizations). Applying the theorem to (each one of the discrete set of connected components of) $S^{\nu}$ we obtain $T' \in \rmCov_S$ and a surjective morphism $Y' \rarr T'$ over $X' \rarr S$. The proof will be finished if we equip $Y' \rarr T'$ with a descent datum with respect to  $S^\nu \rarr S$. Let $p,q: S'_2 = S^\nu \times_S S^\nu \rarr S^\nu$ be the two projections. We get two morphisms $Y'_2 = Y \times_S S'_2 \rarr p^*T'$ and $Y'_2 = Y \times_S S'_2 \rarr q^*T'$ (over $X'_2 \rarr S'_2$) that are surjective with geometrically connected fibres. Thus, by the uniqueness part of the theorem (that we have proven over any scheme) applied to (each connected component of) $S'_2$, we get an isomorphism $\phi: p^*T' \rarr q^* T'$. Then the cocycle condition holds by applying the uniqueness statement (over $S'_3 = S^\nu \times_S S^\nu \times_S S^\nu$) again. Observe that the morphism $Y' \rarr T'$ descends as well by the following reasoning: the schemes $Y'$, $T'$ descent to $Y$ and $T$ and then we look at $Y$ and $T_X=T \times_S X \in \rmCov_X$, and see that $Y' \rarr T' \times_{S^{\nu}} X'$ descends to a morphism $Y \rarr T_X$. This finishes the reduction. Similarly, we see that the problem is Zariski local on $S$ and we can assume $S$ to be affine.

{\centering
\underline{Thus, we can and do assume that $S$ is normal and affine in the rest of the proof.}\\
}
Let $\tS \rarr S$ be a pro-\'etale cover such that $\tS$ is affine w-strictly local (as defined in \cite[Def. 2.2.1]{BhattScholze}). This is possible by \cite[Cor. 2.2.14]{BhattScholze} or \cite[Tag 097R]{StacksProject}). All local rings at closed points of $\tS$ are strictly henselian by \cite[Lm. 2.2.9]{BhattScholze}. They are equal to the strict henselizations of the local rings at corresponding (geometric) points of $S$. By \cite[Lm. 2.1.4]{BhattScholze}, if $\tS^c$ denotes the set of closed points of $\tS$, the composition map $\tS^c \rarr \tS \rarr \pi_0(\tS)$ is a homeomorphism (as $\tS$ is w-local). From this we see that each localization at a closed point of $\tS$ is equal to a connected component of $\tS$ containing this point. Let us denote by $\tX$ and $\tY$ the base-changes of $X$ and $Y$ to $\tS$. By Cor. \ref{generalstrhenscase}, if $\ts \in \tS$ is a closed point and if $c_{\ts}$ denotes the connected component of $\ts$, then $Y_{c_{\ts}}$ is a disjoint union of connected schemes and $\pi_0(Y_{c_{\ts}})$ (it is a discrete set, as $Y_{c_{\ts}}$ is locally noetherian and so the connected components are open) can be canonically identified with $\pi_0(Y_{\ts})$. The point $\ts$ can be seen as a geometric point over its image $s \in S$. By Thm. \ref{bijectiononfibres}, we have a bijection
$\pi_0(Y_{\bs}) \simeq \pi_0(Y_{\bxi})$ for any two geometric points $\bs$ and $\bxi$ on $S$. Thus, we see that $\pi_0$ of restrictions of $\tY$ to two connected components of $\tS$ can be  identified, i.e. if $c_1, c_2 \subset \tS$ are two connected components, there is a bijection of discrete sets $\pi_0(\tY_{c_1}) \simeq \pi_0(\tY_{c_2})$. Thus, as sets, we can write $\pi_0(\tY) \simeq \sqcup_{t \in \pi_0(Y_{\bs})} \pi_0(\tS)_t$ and this identification is compatible with the natural maps $\pi_0(\tY) \rarr \pi_0(\tS)$ and $\pi_0(\tS)_t \stackrel{\id}{\rarr} \pi_0(\tS)$. Here $\bs$ is some fixed (arbitrarily chosen) geometric point of $S$ and the subscript notation in $\pi_0(\tS)_t$ denotes different copies of the set $\pi_0(\tS)$ ($\pi_0(\tS)_t$ does \underline{not} denote a fibre over some $t\in S$!). As we explain below, it can be upgraded to a homeomorphism. Observe that $\pi_0(\tS)$ is a profinite set (as $\tS$ is qcqs), but usually it will not be finite.

\textbf{Claim 1}: There is a homeomorphism $\alpha: \sqcup_{t \in \pi_0(Y_{\bs})} \pi_0(\tS)_t \rarr \pi_0(\tY)$ over $\pi_0(\tS)$.\\
Proof of the claim:
Let $\eta$ be the generic point of $S$ (recall that $S$ is now assumed to be normal). By Lm. \ref{conntected-comps-of-w-strictly-local}, we can identify $\pi_0(\tS_\eta)=\pi_0(\tS)$. 
Let us show that we can identify $\pi_0(\tY_\eta)=\pi_0(\tY)$ as well. Here $\tY_\eta = \tY \times_S \eta = \tY \times_{\tS} \tS_\eta=Y_\eta\times_S \tS$.
\begin{lemma*}
The map $\pi_0(\tY_\eta) \rarr \pi_0(\tY)$ is a homeomorphism.
\end{lemma*}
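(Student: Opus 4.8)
The plan is to show that the continuous map $\pi_0(\tY_\eta) \rarr \pi_0(\tY)$ is simultaneously a topological quotient map and a bijection; since a bijective quotient map is automatically a homeomorphism, this suffices.

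First I would establish the quotient-map part via Lemma \ref{openonpi0}. The point is that, although neither $\tY$ nor $\tY_\eta$ is quasi-compact, the morphism $\tY_\eta \rarr \tY$ \emph{is} qcqs: it is the base change along $\tS \rarr S$ of $\eta = \Spec \kappa(\eta) \rarr S$, which is an affine morphism ($S$ being now integral, $\eta$ lies in every nonempty open, so preimages of affine opens are affine). To apply Lemma \ref{openonpi0} I still need that every connected component of $\tY$ is locally connected and that the image of $\tY_\eta$ is dense in every connected component of $\tY$. For the first point: any connected component $D$ of $\tY$ maps into some connected component $c$ of $\tS$, and $c$ is — as recalled in the main proof and in Lemma \ref{conntected-comps-of-w-strictly-local} — the spectrum of a strictly henselian Noetherian normal local ring; hence $X_c := X \times_S c$ is proper over a Noetherian ring, so Noetherian, so $Y_c := Y \times_S c \in \rmCov_{X_c}$ is locally topologically Noetherian, and $D$ (a connected component of the preimage $Y_c$ of $c$ in $\tY$) is locally connected. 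For the density: $Y_c \rarr c$ is flat and $c$ is integral with generic point $\eta_c$, so the fibre $(Y_c)_{\eta_c}$ is dense in $Y_c$ by going-down, and the image of $\tY_\eta \rarr \tY$ is exactly $\bigsqcup_c (Y_c)_{\eta_c}$. Thus Lemma \ref{openonpi0} gives that $\pi_0(\tY_\eta) \rarr \pi_0(\tY)$ is a topological quotient map (in particular surjective).

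Next I would prove bijectivity. From the commutative square relating $\tY_\eta \rarr \tY$ with $\tS_\eta \rarr \tS$, together with the homeomorphism $\pi_0(\tS_\eta) \simeq \pi_0(\tS)$ of Lemma \ref{conntected-comps-of-w-strictly-local}(2), it is enough to check bijectivity fibrewise over $\pi_0(\tS)$. Since the connected components of $\tY$ lying over $c$ are precisely those of $Y_c = \tY \times_{\tS} c$ (using that $c$ is a cofiltered intersection of clopen neighbourhoods in $\tS$, hence $Y_c$ a cofiltered intersection of clopen subsets of $\tY$), and likewise for $\tY_\eta$ over the point $\eta_c$, one reduces to showing that $\pi_0((Y_c)_{\eta_c}) \rarr \pi_0(Y_c)$ is bijective. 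Now apply Corollary \ref{generalstrhenscase} to $c$ (a strictly henselian Noetherian scheme), $X_c \rarr c$, $Y_c \in \rmCov_{X_c}$, and a geometric point $\overline{\eta_c}$ over $\eta_c$: the composite $\pi_0((Y_c)_{\overline{\eta_c}}) \rarr \pi_0((Y_c)_{\eta_c}) \rarr \pi_0(Y_c)$ is a bijection. The first arrow is surjective, being induced by base change along the field extension $\kappa(\eta_c) \subset \overline{\kappa(\eta_c)}$; hence it is injective, hence bijective, and therefore the second arrow $\pi_0((Y_c)_{\eta_c}) \rarr \pi_0(Y_c)$ is bijective. Together with the previous paragraph, the lemma follows.

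I expect the part requiring the most care to be the bookkeeping with non-quasi-compact schemes — identifying the connected components of $\tY$ over a component $c$ of $\tS$ with those of $Y_c = \tY \times_{\tS} c$, and recognizing that $\tY_\eta \rarr \tY$ is qcqs even though $\tY$ itself is not, which is exactly the input that lets Lemma \ref{openonpi0} carry the topological content. The algebraic core (the fibrewise bijection over the strictly henselian local components) is then immediate from Corollary \ref{generalstrhenscase}.
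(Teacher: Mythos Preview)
Your proposal is correct and follows essentially the same approach as the paper: bijectivity is checked fibrewise over $\pi_0(\tS)$ via Corollary \ref{generalstrhenscase} applied to each strictly henselian component $c$, and the homeomorphism statement is obtained from Lemma \ref{openonpi0}. You merely reverse the order of these two steps and spell out more explicitly why the hypotheses of Lemma \ref{openonpi0} hold (qcqs-ness of $\tY_\eta \rarr \tY$, local connectedness of components of $\tY$, density of the image), which the paper leaves implicit.
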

To show that  $\pi_0(\tY_\eta) \rarr \pi_0(\tY)$ is bijective it is enough to look fibre by fibre over $\pi_0(\tS)$. Thus, we can fix a connected component $c \in  \pi_0(\tS)$ and base-change to $c$ (keep in mind that, as a morphism of schemes, $c \rarr S$ is (among other properties) a closed immersion. In particular, $\pi_0(\tY \times_{\tS} c)$ is equal to the preimage of $c$ under $\pi_0(\tY) \rarr \pi_0(\tS)$. Similarly for $\tY_\eta$.). The component $c$ is a strict henselization at a (geometric) point on $S$ (Lm. \ref{conntected-comps-of-w-strictly-local}) and so is noetherian. The fibre $c_\eta$ consists of a single point: the generic point of $c$ ($c$ is a normal connected scheme), let us call it $\xi$. Then the map $(\tY_\eta)_c \rarr \tY_c$ (where the subscript c denotes the base-change from $\tS$ to $c$) is equal to the embedding of the fibre over $\xi$ to $\tY_c$ (i.e the base-change of $\xi \rarr c$ to $\tY_c$). By Cor. \ref{generalstrhenscase}, $\pi_0((\tY_c)_{\bxi}) \rarr \pi_0(\tY_c)$ is a bijection. But we have a factorization $\pi_0((\tY_c)_{\bxi}) \epirarr \pi_0((\tY_c)_{\xi}) \rarr \pi_0(\tY_c)$, where the first map is surjective. It follows that $\pi_0((\tY_c)_{\xi}) \rarr \pi_0(\tY_c)$ is a bijection. But $(\tY_c)_{\xi} \simeq (\tY_c)_\eta$ and the proof of bijectivity is finished. By Lm. \ref{openonpi0}, $\pi_0(\tY_\eta) \rarr \pi_0(\tY)$ is a homeomorphism and the proof of the lemma is finished.

Thus, we can focus on understanding $\pi_0(\tY_\eta)$. The scheme $Y_\eta$ is a (possibly infinite in this proof, but see Rmk. \ref{remark-indexing-set-finite}) disjoint union of connected components belonging to $\rmCov_{X_\eta}$. These components are clopen and so $\pi_0(Y_\eta)$ and $\pi_0(\tY_\eta)$ split accordingly. We can thus restrict attention to one connected component and assume $Y_\eta$ connected in the proof of the claim. Let $c \in \pi_0(\tS_\eta)$. The component $c$ is the spectrum of a separable algebraic field extension $L_c$ of $K= \kappa(\eta)$. By Cor. \ref{generalstrhenscase} and Lm. \ref{finitegeomconncomp}, the base-change $\tY_c$ is a disjoint union of finitely many components and each of them is geometrically connected. Thus, $Y_{L_c}$ has geometrically connected components. Moreover, the number of these connected components is constant when $c$ varies (by Thm. \ref{bijectiononfibres}), let us say equal $M$. The scheme $\tS_\eta$ is pro-\'etale over $\eta$ and so a cofiltered limit $\lim S_\lambda$ of finite unions of spectra of finite separable extensions of $K$.
\begin{lemma*}
For some $\lambda_0$, there is $S_{\lambda_0} = \sqcup_i \Spec(L_i)$ with $L_i/K$ finite separable and such that the connected components of $Y_{L_i}$ are geometrically connected (or, in other words, $Y_{L_i}$ has precisely $M$ connected components).
\end{lemma*}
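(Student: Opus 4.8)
The plan is to reformulate the statement as a condition on the continuous $\Gal_K$-action on the set of geometric connected components of $Y_\eta$, and then finish by a compactness argument on the profinite set $\pi_0(\tS_\eta)=\varprojlim_\lambda\pi_0(S_\lambda)$. First I would record that, for every connected component $c$ of $\tS_\eta$, the base change $Y_{L_c}$ has exactly $M$ geometrically connected components, so that $\pi_0(Y_{\overline K})=\pi_0(Y_{L_c})$ is finite of cardinality $M$. I would also note that $Y_\eta$, as well as all its base changes to algebraic extensions of $K$, are locally noetherian (being locally of finite type over the noetherian schemes $X_L$), so their connected components are clopen; in particular $\pi_0(Y_L)$ is a genuine discrete set for each such $L$, and $\pi_0(Y_{\overline K})\twoheadrightarrow\pi_0(Y_L)$ is surjective.

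\textbf{Continuity of the Galois action.} Next I would show that the natural action of $\Gal_K$ on $\pi_0(Y_{\overline K})$ is continuous, and let $\hat F_0/K$ be the finite Galois extension with $\Gal_{\hat F_0}=\ker\big(\Gal_K\to\Aut(\pi_0(Y_{\overline K}))\big)$. To prove continuity despite $Y_\eta$ not being of finite type, I would invoke the filtration of Lemma \ref{wrtngassum}: write $Y_\eta=\bigcup_n Z_n$ with each $Z_n$ connected and, being of finite type over $X_\eta$ and satisfying the valuative criterion of properness, proper over $K$. Choosing $N$ large enough that $(Z_N)_{\overline K}$ meets all $M$ components of $Y_{\overline K}$ (possible since there are finitely many, each nonempty, and the $(Z_n)_{\overline K}$ are increasing with union $Y_{\overline K}$), one gets a $\Gal_K$-equivariant surjection $\pi_0((Z_N)_{\overline K})\twoheadrightarrow\pi_0(Y_{\overline K})$, and by Lemma \ref{finitegeomconncomp} the $\Gal_K$-action on $\pi_0((Z_N)_{\overline K})=\pi_0((Z_N)_{K^{\mathrm{sep}}})$ factors through a finite quotient; hence so does the action on the quotient $\pi_0(Y_{\overline K})$. (Transitivity of this action, which is not needed for the argument, follows from connectedness of $Y_\eta$ by Galois descent of idempotents.)

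\textbf{The orbit identification (the main obstacle).} I then claim that for \emph{every} algebraic separable extension $L/K$, finite or not, the surjection $\pi_0(Y_{\overline K})\twoheadrightarrow\pi_0(Y_L)$ identifies $\pi_0(Y_L)$ with the set of $\Gal_L$-orbits on $\pi_0(Y_{\overline K})$. To see this, decompose $Y_L$ into its clopen connected components $C$; each $\Gal_L$-orbit lies in some $\pi_0(C_{\overline K})$ (an $L$-automorphism of $\overline K$ preserves each $L$-scheme $C$), and conversely $\Gal_L$ acts transitively on $\pi_0(C_{\overline K})$ by Galois descent of idempotents applied to the connected scheme $C$; thus the orbits are exactly the subsets $\pi_0(C_{\overline K})$. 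Consequently $\#\pi_0(Y_L)=M$ if and only if $\Gal_L$ acts trivially on $\pi_0(Y_{\overline K})$, i.e. if and only if $\Gal_L\subseteq\Gal_{\hat F_0}$, i.e. if and only if $L\supseteq\hat F_0$; this also yields the ``in other words'' clause of the statement, since $\#\pi_0(Y_L)=M$ is equivalent to each component $C$ of $Y_L$ having $\pi_0(C_{\overline K})$ a singleton. I expect this step to be the crux: because $Y_\eta$ and the components $C$ are not quasi-compact one cannot simply pass to a limit over finite subextensions, and it is the filtration of Lemma \ref{wrtngassum} together with Galois descent of clopen decompositions that make the identification go through.

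\textbf{Conclusion by compactness.} Finally, for each connected component $c$ of $\tS_\eta$ the scheme $Y_{L_c}$ has $M$ components, so by the previous step $L_c\supseteq\hat F_0$; since $L_c=\varinjlim_\lambda L^{(c)}_\lambda$ and $\hat F_0/K$ is finite, already $\hat F_0\subseteq L^{(c)}_{\lambda(c)}$ for some index $\lambda(c)$. The preimage in $\pi_0(\tS_\eta)$ of $\{$components of $S_{\lambda(c)}$ whose field contains $\hat F_0\}$ is then an open neighbourhood of $c$. As $\pi_0(\tS_\eta)$ is profinite hence compact, finitely many such neighbourhoods cover it; taking $\lambda_0$ above the finitely many corresponding indices, every component $\Spec(L_i)$ of $S_{\lambda_0}$ arises as $L^{(c)}_{\lambda_0}$ for some $c$ (as $\tS_\eta\to S_{\lambda_0}$ is surjective) and therefore satisfies $L_i\supseteq\hat F_0$. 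By the orbit identification, $Y_{L_i}$ has $M$ connected components, which is the assertion.
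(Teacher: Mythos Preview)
Your argument is correct and follows the same underlying idea as the paper: produce a finite Galois ``splitting field'' over which the components of $Y_\eta$ become geometrically connected, and then conclude by compactness of the profinite index set. Two differences are worth noting. First, your careful re-derivation of the continuity of the $\Gal_K$-action and of the field $\hat F_0$ via the filtration of Lemma~\ref{wrtngassum} is unnecessary here: this is exactly the content of Lemma~\ref{finitegeomconncomp} (whose hypotheses are met since $Y_\eta$ is locally of finite type over $K$), and the paper had already invoked that lemma in the paragraph immediately preceding the statement to obtain the ``smallest field'' $L_{\mathrm{smallest}}$, which coincides with your $\hat F_0$. Second, the paper runs the compactness argument in the contrapositive form: letting $W_\lambda\subset S_\lambda$ be the union of the \emph{bad} components (those $\Spec(L_i)$ with $Y_{L_i}$ not yet split into geometrically connected pieces), one checks that the $W_\lambda$ form a sub-inverse system of finite sets, and if no $W_\lambda$ is empty then $\varprojlim W_\lambda\neq\emptyset$, yielding a point of $\tS_\eta$ whose residue field contains $L_{\mathrm{smallest}}$ but is a filtered union of ``bad'' subfields, a contradiction. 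This formulation sidesteps the one soft spot in your write-up, namely the appeal to surjectivity of $\tS_\eta\to S_{\lambda_0}$, which is not part of the given presentation $\tS_\eta=\varprojlim S_\lambda$; your version is easily repaired (replace each $S_\lambda$ by the image of $\tS_\eta$, or pass to a larger $\lambda$ to kill the finitely many components not hit), but the paper's dual phrasing avoids the issue altogether.
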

Indeed,  for each $S_\lambda$, let $W_\lambda \subset S_\lambda$ be the union of those $\Spec(L_i) \subset S_\lambda$ that \underline{do not} have this property. This forms a sub-inverse system of $S_\lambda$. We want to show that, for some $\lambda_0$, $W_{\lambda_0}$ is empty. Assume the contrary. The maps between $W_\lambda's$ are affine and thus $\widetilde{W} = \lim W_\lambda$ exists in the category of schemes and moreover $\widetilde{W}_{\rmtop} = \lim W_{\lambda,\rmtop}$ (\cite[Tag 0CUF]{StacksProject}), where $W_{\rmtop}$ denotes the underlying topological space of a scheme $W$. As $W_{\lambda,\rmtop}$ are finite and non-empty, the inverse limit is non-empty as well (see \cite[Lm. 086J]{StacksProject}). The image of any point $w\in \widetilde{W}$ in $\tS$ gives a point of $\tS$ which has as the residue field a separable extension $L/K$ such that $Y_L$ is a disjoint union of geometrically connected components, but $L$ can be written as a filtered colimit of fields $L_\alpha$ with $L_\alpha/K$ finite separable and such that the connected components of $Y_{L_\alpha}$ are not all geometrically connected. But $L$ must contain the smallest field $L_{\mathrm{smallest}}$ of Lm. \ref{finitegeomconncomp} and consequently (using that $L_{\mathrm{smallest}}/K$ is finite) one of $L_\alpha$ must contain $L_{\mathrm{smallest}}$ as well, which (by Lm. \ref{finitegeomconncomp} again) contradicts the fact that $Y_{L_{\alpha}}$ has a component that is not geometrically connected. Thus, we proved that there exists $\lambda_0$ such that $S_{\lambda_0} = \sqcup_{i=1}^{m_0} \Spec(L_i)$ with $L_i/K$ finite separable and such that the connected components of $Y_{L_i}$ are geometrically connected. This finishes the proof of the lemma.

Now, we have an equality $\pi_0(Y_{L_i})=\pi_0(Y_{\bar{\eta}})$. Taking the preimages of the connected components of each $Y_{L_i}$ in $\tY_\eta$, we see that
$\tY_{\eta}$ decomposes as a disjoint union of clopen subsets $\tZ_t$, parametrized by $t \in \pi_0(Y_{\bar{\eta}})$, such that each $\tZ_t$ maps surjectively onto $\tS_\eta$ and induces a continuous bijection $\pi_0(\tZ_t) \rarr \pi_0(\tS_\eta)$. More precisely, we have a diagram 
\begin{center}
\begin{tikzpicture}
\matrix(a)[matrix of math nodes,
row sep=1.5em, column sep=1em,
text height=2ex, text depth=0.25ex]
{\tY_\eta & \hspace{3mm}  & \tS_\eta & \\ \sqcup_i Y_{L_i} & \hspace{3mm} & S_{\lambda_0} & \sqcup_i \Spec(L_i) \\ Y_\eta & \hspace{3mm} & \eta & \\};

\path[->] (a-1-1) edge (a-2-1);
\path[->] (a-2-1) edge (a-3-1);
\path[->] (a-1-3) edge (a-2-3);
\path[->] (a-2-3) edge (a-3-3);
\path[->] (a-1-1) edge (a-1-3);
\path[->] (a-2-1) edge (a-2-3);
\path[->] (a-3-1) edge (a-3-3);
\draw[double distance = 1.5pt](a-2-3) -- (a-2-4);

\end{tikzpicture}
\end{center}
and we know that for each $i$, $Y_{L_i} = \sqcup_{t \in \pi_0(Y_{\bar{\eta}})}Z_{i,t}$, with $Z_{i,t} \rarr \Spec(L_i)$ geometrically connected. We define $\tZ_{i,t}$ to be the preimage of $Z_{i,t}$ in $\tY_{\eta}$ and put $\tZ_i = \sqcup_{t\in \pi_0(Y_{\bar{\eta}})} \tZ_{i,t}$. As the map $\tZ_t \subset \tY_{\eta} \rarr \tS_{\eta}$ is open, we get by Lm. \ref{morphism-as-in-hes-over-connected-is-conn} that $\pi_0(\tZ_t) \rarr \pi_0(\tS_\eta)$ is actually a homeomorphism. Thus, we get a homeomorphism $\pi_0(\tY_\eta) \simeq \underset{t \in \pi_0(Y_{\bar{\eta}})}{\sqcup} \pi_0(\tS_\eta)$ as desired.

By the last claim, there is an isomorphism $\tY \simeq \underset{t \in \pi_0(Y_{\bs})}{\sqcup} \tY_t$, i.e. $\tY$ splits as a union of clopen subsets parametrized by $t \in \pi_0(Y_{\bs})$. Define $\tT = \underset{t \in \pi_0(Y_{\bs})}{\sqcup} \tS_t$, where $\tS_t$ is a copy of $\tS$. There is an obvious morphism $\tY \rarr \tT$, which restricted to a fixed $\tY_t$ factorizes through $\tS_t$. The scheme $\tT$ is in $\rmCov_{\tS}$ and we want to show that it descends to a covering of $T$. The morphism $\tY \rarr \tT$ is surjective and has geometrically connected fibres. Indeed, to see the surjectivity, observe that by Lm. \ref{morphism-as-in-hes-over-connected-is-conn} and by the construction of $\tT$, $\pi_0(\tY)\rarr \pi_0(\tT_{\tX})$ is a homeomorphism and the connected components of $\tT_{\tX}$ are isomorphic to connected components of $\tX$ and thus noetherian. Restricting to such a component, $\tY \rarr \tT_{\tX}$ becomes a geometric covering with dense image and thus surjective, e.g. by \cite[Lm. 7.3.9]{BhattScholze}. To see the connectedness of geometric fibres, observe that, by the construction, $\pi_0(\tY) \rarr \pi_0(\tT)$ is a homeomorphism and $\tT = \pi_0(Y_{\bar{\eta}})\times \tS$. We can restrict the situation to a fixed connected component $c$ of $\tS$. Identifying $Y_{\bar{\eta}} \simeq \tY_c\times_c \bar{\eta}$ (using some lift of $\bar{\eta}$ from $S$ to $c$), we see that the $\bar{\eta}$-fibre of $\tY \rarr \tT$ is connected. But now it follows quite easily from Cor. \ref{generalstrhenscase} that in fact every geometric fibre is connected. The proof of existence of the descent datum and checking the cocycle condition follows essentially from the uniqueness of the infinite Stein factorization. We cannot, however, simply apply the uniqueness statement proven above, as $\pi_0(\tS)$ is not discrete (and we cannot simply argue by restricting to the connected components). Thus, we have to be slightly more careful. We need to equip $\tT$ with a descent datum. Denote for brevity $\tS_2 = \tS\times_S\tS$, $\tY_2 = \tY\times_Y\tY$, $\tX_2 = \tX\times_X\tX$ and let $p,q:\tS_2 \rarr \tS$ be the canonical projections. We need to define an isomorphism over $\tS_2$ between the two base-changes $p^*\tT$ and $q^*\tT$. We have diagrams
\begin{center}
\begin{tikzpicture}
\matrix(a)[matrix of math nodes,
row sep=1.5em, column sep=2em,
text height=2ex, text depth=0.25ex]
{\tY_2 & p^*\tT &  \tS_2 &  & \tY_2 & q^*\tT &  \tS_2\\  \tY & \tT &  \tS &  & \tY & \tT &  \tS\\};

\path[->] (a-1-1) edge node[left]{$p$} (a-2-1);
\path[->] (a-1-1) edge node[above]{$\alpha$} (a-1-2);
\path[->] (a-2-1) edge (a-2-2);
\path[->] (a-1-2) edge  (a-2-2);

\path[->] (a-1-2) edge (a-1-3);
\path[->] (a-2-2) edge (a-2-3);
\path[->] (a-1-3) edge node[left]{$p$} (a-2-3);

\path[->] (a-1-5) edge node[left]{$q$} (a-2-5);
\path[->] (a-1-5) edge node[above]{$\beta$} (a-1-6);
\path[->] (a-2-5) edge (a-2-6);
\path[->] (a-1-6) edge (a-2-6);

\path[->] (a-1-6) edge (a-1-7);
\path[->] (a-2-6) edge (a-2-7);
\path[->] (a-1-7) edge node[left]{$q$} (a-2-7);

\end{tikzpicture}
\end{center}
with all squares Cartesian. We claim that: $\alpha$ and $\beta$ induce homeomorphisms on $\pi_0$'s. Indeed, $\tY \rarr \tT$ is universally open, surjective  with geometrically connected fibres (we are using \cite[Thm. 2.4.6]{EGAIV2} here to check openness. To check that $\tY \rarr \tT$ is flat and locally of finite presentation, it is enough to these properties for $\tY \rarr \tT_{\tX}=\tT\times_{\tS}\tX$, as $\tT_{\tX} \rarr \tT$ has the desired properties. But $\tY \rarr \tT_{\tX}$ is a morphism of \'etale $\tX$-schemes, so the properties follows. The surjectivity was proven above). Thus, the same is true for $\alpha$ and $\beta$. These assumptions imply that $\pi_0(\alpha)$ and $\pi_0(\beta)$ are continuous bijections and in fact homeomorphisms by Lm. \ref{morphism-as-in-hes-over-connected-is-conn}.

From this claim we obtain a homeomorphism $\phi_0 = \pi_0(\beta)\circ \pi_0(\alpha)^{-1}: \pi_0(p^*\tT) \rarr \pi_0(q^*\tT)$ over $\pi_0(\tS_2)$.

\textbf{Claim 2}: $\phi_0$ lifts uniquely to an isomorphism $\phi: p^*\tT \rarr q^*\tT$ over $\tS_2$.\\
Proof of the claim: $p^*\tT$ and $q^*\tT$ are both isomorphic over $\tS_2$ to a disjoin union $\sqcup_{t \in \pi_0(Y_{\bs})} \tS_2$. Fixing these isomorphisms, we can view $\phi_0$ as a homeomorphism of $\sqcup_{t \in \pi_0(Y_{\bs})} \pi_0(\tS_2)$ with itself and we want to show, that it lifts to an isomorphism of $\sqcup_{t \in \pi_0(Y_{\bs})} \tS_2$. This follows by Lm. \ref{lifting-automorphism-of-pi0}, as $\pi_0(\tS_2)$ is compact and $\phi_0$ is over the base $\pi_0(\tS_2)$.

We need to show that $\phi$ satisfies the cocycle condition. Let $\tS_3=\tS\times_S \tS\times_S\tS$ and analogously for $\tY$. For $i\in \{1,2,3\}$ let $p_i: \tS_3 \rarr \tS_2$ be the projection forgetting the $i$-th factor and for $i \neq j$ in $\{1,2,3\}$ denote $p_{ij}:\tS_3 \rarr \tS$ the projection forgetting the $i$-th and $j$-th factors. As in the case of double products, there  are morphisms $\tY_3 \rarr p_{ij}^*\tT$ fitting into suitable Cartesian diagrams. Denoting by $a$ and $b$ the morphisms $\tY_3 \rarr p_{13}^*\tT$ and $\tY_3 \rarr p_{12}^*\tT$ respectively, we have a commutative diagram 
\begin{center}
\begin{tikzpicture}
\matrix(a)[matrix of math nodes,
row sep=0.2em, column sep=1.7em,
text height=2ex, text depth=0.25ex]
{     &   {}  &             &  p_1^*(p^*\tT)=p_{13}^*\tT &  {}   &      \\
\tY_3 &     &             &              &     & \tS_3\\
      &     & p_1^*(q^*\tT)=p_{12}^*\tT &              &     &      \\
      &     &             &     {}         &     &      \\
      &     &             &              &     &      \\
      &     &             &  p^*\tT &     &      \\
\tY_2 &     &             &              &     & \tS_2\\
      &     & q^*\tT &              &     &      \\};

\path[->] (a-2-1) edge node[above left]{$a$} (a-1-4);
\path[->] (a-1-4) edge (a-2-6);
\path[->] (a-2-1) edge node[below left]{$b$} (a-3-3);
\path[->] (a-3-3) edge (a-2-6);

\path[->] (a-2-1) edge node[left]{$p_1$} (a-7-1);
\path[->] (a-1-4) edge node[left]{$p_1$} (a-6-4);
\path[->] (a-3-3) edge node[left]{$p_1$} (a-8-3);
\path[->] (a-2-6) edge node[left]{$p_1$} (a-7-6);

\path[->] (a-7-1) edge node[above right]{$\alpha$} (a-6-4);
\path[->] (a-6-4) edge (a-7-6);
\path[->] (a-7-1) edge node[below left]{$\beta$} (a-8-3);
\path[->] (a-8-3) edge (a-7-6);

\end{tikzpicture}
\end{center}
and analogous diagrams for the projections $p_2$ and $p_3$.\\
\textbf{Claim 3}: the induced maps $\pi_0(a):\pi_0(\tY_3) \rarr \pi_0(p_{13}^*\tT)$ and $\pi_0(b):\pi_0(\tY_3) \rarr \pi_0(p_{12}^*\tT)$ are homeomorphisms. The homeomorphism $\psi_0 = \pi_0(b)\circ \pi_0(a)^{-1}$ lifts uniquely to an isomorphism $\psi: p_{13}^*\tT \rarr p_{12}^*\tT$ over $\tS_3$ and is equal to $p_1^*(\phi)$. Analogous statements hold respectively for the diagrams involving projections $p_2$ and $p_3$.\\
Proof of the claim: The proofs that $\pi_0(a), \pi_0(b)$ are homeomorphisms and that $\psi_0$ lifts canonically is virtually the same as the proof of the Claim 2. above. To see the last part of the claim we use the commutativity  of the last diagram and the fact that $\tT = \sqcup_t \tS$, from which we easily conclude that $\psi_0=\pi_0(p_1)^*(\phi_0)$ and from the definitions of $\phi$ and $\psi$ we see that also $p_1^*(\phi) = \psi$.

Having the claim, the cocycle condition for $\phi$ follows and thus we have constructed a descent datum on $\tT$. Moreover, by construction it is compatible with $\tY \rarr \tT$. Thus, by fpqc descent, we obtain an sheaf $T$ on $S_{fpqc}$ that becomes constant on $\tS$. Thus, we can view $T$ as an element of $\mathrm{Loc}_S$ (\cite[Def. 7.3.1]{BhattScholze}) and by the equivalence $\mathrm{Loc}_S=\rmCov_S$ of \cite[Lm. 7.3.9]{BhattScholze}, $T$ is representable by a geometric covering of $S$. The descent datum on $\tY \rarr \tT$ gives a morphism $Y \rarr T$ over $S$ by fpqc descent for morphisms of schemes (by \cite[Rmk. 040L]{StacksProject} and \cite[Lm. 02W0]{StacksProject}). See also Lm. \ref{desctoalgspace} and the preceding discussion. Let $\bt \in T$ be a geometric point. It lifts to a geometric point on $\tT$ (as $\tT \rarr T$ is a base-change of $\tS \rarr S$, and so weakly \'etale) and $Y_{\bt} = \tY \times_{\tT}\bt$. Thus, $Y_{\bt}$ is connected, as $\tY \rarr \tT$ had geometrically  connected fibres. Similarly, $Y \rarr T$ is surjective because $\tY \rarr \tT$ was. Thus, $T$ is is connected as an image of $Y$.

\end{proof}

\begin{rmk}\label{remark-indexing-set-finite}
Having finished the above proof, one can use Cor. \ref{application-of-hes-when-S-normal} together with Lm. \ref{finitegeomconncomp} to conclude that, when $S$ is normal,  the indexing set that appeared many times in the proof of Thm. \ref{steinforgeomcov}, namely $\pi_0(Y_{\bar{\eta}})$, is finite.
\end{rmk}

\begin{rmk}
  As $S$ is assumed to be Nagata, one is tempted to use \cite[Remark 7.3.10]{BhattScholze} to simplify the topological part of the proof (of having to deal with non-discrete $\pi_0$'s) at the expense of working with henselizations along more general closed subschemes. However, as the case when $S$ is normal is already non-trivial, this does not seem to be a more efficient approach.
\end{rmk}

\subsection{A remark on pro-\'etale descent}
The following results arose as an attempt of the author to understand why at some point of the proof of \cite[Lemma 7.3.9]{BhattScholze} the fpqc sheaf obtained via descent is automatically an algebraic space. The main question being of the existence of a cover by \'etale schemes. As explained to me via e-mail by Prof. Scholze, this is a consequence of the fact that the objects of $\rmLoc_X$ are classical (see Remark \ref{classical-remark}).

Below we present another approach that, however, requires an additional assumption that the maps in the pro-\'etale presentation of the considered cover are surjective. By following the construction (see e.g. \cite[Section 097Q]{StacksProject}) one sees that this can be guaranteed when constructing w-strictly local covers. This is for example sufficient for our needs in the proof of Theorem \ref{steinforgeomcov}. However, for w-contractible covers this condition will usually not be satisfied.
In the Prop. \ref{descentandlimits} and Lm. \ref{desctoalgspace} below, the results are Zariski local on the base $S$ and thus in the proofs we will assume it to be affine.
\begin{proposition}\label{descentandlimits}
Let $S$ be a qcqs scheme and let $\tS \rarr S$ be a pro-\'etale cover with a presentation as a limit over a directed inverse system $\tS = \invlim_\lambda S_\lambda$, where $S_\mu \rarr S_\lambda$ are all affine \'etale. \uline{Assume that all the maps $S_\mu \rarr S_\lambda$ in the inverse system are surjective.} Let $\tT \rarr \tS$ be a scheme with a fpqc-descent datum $\phi:p^*\tT \tilde{\longrightarrow} q^*\tT$ with respect to $\tS \rarr S$. For any $\lambda$, this gives a descent datum $\phi_\lambda$ for $\tT$ with respect to $\tS \rarr S_\lambda$ as well. Let $T$ (and $T_\lambda$) denote the fpqc-sheaf on $S$ ($S_\lambda$ respectively) obtained by the fpqc descent. Then for any affine open $\tU \subset \tT$ there exists $\lambda_0$ such that $\phi_{\lambda_0}$ induces a descent datum on $\tU \rarr \tT$ with respect to $\tS \rarr S_{\lambda_0}$, i.e. $\phi_{\lambda_0}$ restricts to an isomorphism $p_{\lambda_0}^*\tU \tilde{\longrightarrow} q_{\lambda_0}^*\tU$. Moreover, in the obtained morphism of fpqc sheaves $U_{\lambda_0} \rarr T_{\lambda_0}$, $U_{\lambda_0}$ is representable by a scheme.
\end{proposition}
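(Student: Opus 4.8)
Here is how I would prove the final assertion of Proposition~\ref{descentandlimits}.

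\emph{Reductions, set-up, and strategy.} As remarked, the statement is Zariski-local on $S$, so I would assume $S$ affine and, replacing the inverse system by the cofinal subsystem above a fixed index (which changes nothing in the conclusion, as the $T_\lambda$ and $\phi_\lambda$ are already relative to the $S_\lambda$), assume in addition each $S_\lambda \to S$ affine and \'etale; then $\tS = \invlim_\lambda S_\lambda$, every $S_\lambda$, and $\tS \times_S \tS$ are affine. Write $\tS = \Spec A$, $S = \Spec R$, so $A = \rmcolim_\lambda R_\lambda$. For each $\lambda$ the subscheme $V_\lambda := \tS \times_{S_\lambda} \tS \subseteq \tS \times_S \tS$ is cut out by an idempotent $e_\lambda \in A \otimes_R A$ (as $S_\lambda \to S$ is \'etale and separated, $\Delta_{S_\lambda/S}$ is a clopen immersion); the $V_\lambda$ form a cofiltered decreasing family, and $\bigcap_\lambda V_\lambda = \Delta(\tS)$, since the diagonal ideal $\ker(A \otimes_R A \to A)$ is generated by the $1-e_\lambda$ (here one uses $A = \rmcolim_\lambda R_\lambda$). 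Recall that $\phi : p^*\tT \to q^*\tT$ is an isomorphism over $\tS \times_S \tS$ with $\Delta^*\phi = \id_{\tT}$ (a consequence of the cocycle condition), and that $\phi_\lambda$ is the restriction of $\phi$ along $V_\lambda \hookrightarrow \tS \times_S \tS$. The plan is: (1) show the failure of $\phi$ to preserve $\tU$ is concentrated on a quasi-compact locus away from the diagonal and hence disappears over some $V_{\lambda_0}$; (2) deduce representability of $U_{\lambda_0}$ from effectivity of fpqc descent for affine morphisms.

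\emph{Pushing the restriction to $\tU$ to a finite level.} Since $\tU$ and $\tS$ are affine, $\tU \to \tS$ is affine, hence $p^*\tU$, $q^*\tU$, $\phi^{-1}(q^*\tU) \subseteq p^*\tT$, and $\phi(p^*\tU) \subseteq q^*\tT$ are all affine over $\tS \times_S \tS$. I would consider the reduced closed subschemes $G := \phi^{-1}(q^*\tU) \setminus p^*\tU$ of $\phi^{-1}(q^*\tU)$ and $G' := q^*\tU \setminus \phi(p^*\tU)$ of $q^*\tU$; each is closed in an affine scheme, hence affine, hence quasi-compact over $\tS \times_S \tS$. From $\Delta^*\phi = \id$ one gets $\Delta^*(\phi^{-1}(q^*\tU)) = \tU = \Delta^*(p^*\tU)$, so $\rmim(G)$ and $\rmim(G')$ are disjoint from $\Delta(\tS)$. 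Now one uses that the image of a quasi-compact morphism of qcqs schemes is pro-constructible, hence closed in the constructible topology; the constructible topology of the affine scheme $\tS \times_S \tS$ is compact, each $V_\lambda$ is clopen in it, and $\bigcap_\lambda V_\lambda = \Delta(\tS)$ is disjoint from $\rmim(G) \cup \rmim(G')$. By the finite intersection property there is $\lambda_0$ with $V_{\lambda_0} \cap (\rmim(G) \cup \rmim(G')) = \emptyset$, i.e. over $V_{\lambda_0}$ one has $p^*\tU = \phi^{-1}(q^*\tU)$ inside $p^*\tT$; equivalently $\phi_{\lambda_0}$ restricts to an isomorphism $p_{\lambda_0}^*\tU \simeq q_{\lambda_0}^*\tU$. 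The cocycle identity for this restricted datum is inherited from that of $\phi$ (restrict along $\tS \times_{S_{\lambda_0}} \tS \times_{S_{\lambda_0}} \tS \hookrightarrow \tS \times_S \tS \times_S \tS$), so $\phi_{\lambda_0}$ induces a descent datum on the open subscheme $\tU \subseteq \tT$ relative to $\tS \to S_{\lambda_0}$, compatibly with $\phi_{\lambda_0}$ on $\tT$.

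\emph{Representability of $U_{\lambda_0}$.} The inclusion $\tU \hookrightarrow \tT$ is then a morphism of descent data relative to $\tS \to S_{\lambda_0}$ and descends to a morphism (in fact an open immersion) $U_{\lambda_0} \to T_{\lambda_0}$ of fpqc sheaves on $S_{\lambda_0}$. Here the surjectivity hypothesis enters: since each $S_\mu \to S_{\lambda_0}$ ($\mu \ge \lambda_0$) is surjective, the affine morphism $\tS = \invlim_{\mu \ge \lambda_0} S_\mu \to S_{\lambda_0}$ is surjective, hence faithfully flat, so $\tS \to S_{\lambda_0}$ is an fpqc cover. As $\tU \to \tS$ is affine, the descent datum on the $\tS$-scheme $\tU$ is the same as a descent datum on the quasi-coherent $\calO_{\tS}$-algebra $(\tU \to \tS)_*\calO_{\tU}$ relative to $\tS \to S_{\lambda_0}$; by effectivity of fpqc descent for quasi-coherent sheaves this descends to a quasi-coherent $\calO_{S_{\lambda_0}}$-algebra, whose relative spectrum is a scheme affine over $S_{\lambda_0}$ representing $U_{\lambda_0}$. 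Hence $U_{\lambda_0}$ is representable by a scheme, as claimed.

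\emph{Main obstacle.} The delicate step is the middle one. Because neither $\tU \to \tS$ nor $\tT \to \tS$ is assumed of finite presentation, one cannot spread $\tU$ out over a single $S_\mu$ and conclude by a routine limit/approximation argument; one must instead observe that the obstruction to $\phi$ preserving $\tU$ lives on a quasi-compact subset of $\tS \times_S \tS$ missing the diagonal, and push it outside some $V_{\lambda_0}$ by a compactness argument in the constructible topology. The two technical inputs requiring care are the identification $\bigcap_\lambda (\tS \times_{S_\lambda} \tS) = \Delta(\tS)$ and the pro-constructibility of $\rmim(G)$ and $\rmim(G')$.
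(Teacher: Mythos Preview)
Your approach is essentially the same as the paper's: both arguments identify the ``defect'' locus where $\phi$ fails to match $p^*\tU$ with $q^*\tU$, observe it is quasi-compact and misses the diagonal, and use a compactness/limit argument to push it outside some $\tS\times_{S_{\lambda_0}}\tS$. The paper packages this as an inverse system of affine symmetric differences $\Delta_\lambda=(q_\lambda^*\tU\setminus\phi_\lambda(p_\lambda^*\tU))\cup(\phi_\lambda(p_\lambda^*\tU)\setminus q_\lambda^*\tU)$ with empty limit and invokes \cite[Tag~01Z2]{StacksProject}; you work once and for all over $\tS\times_S\tS$ and use compactness of the constructible topology. These are equivalent formulations. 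For representability the paper cites \cite[Tag~0247]{StacksProject} (descent for quasi-affine morphisms); your appeal to descent of quasi-coherent algebras is the same mechanism in the affine case.

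There is, however, one genuine slip. Your $G$ and $G'$ encode the \emph{same} inclusion: $G=\phi^{-1}(q^*\tU)\setminus p^*\tU$ is empty iff $\phi^{-1}(q^*\tU)\subseteq p^*\tU$, and $G'=q^*\tU\setminus\phi(p^*\tU)$ is empty iff $q^*\tU\subseteq\phi(p^*\tU)$, which is the same condition after applying $\phi^{-1}$ (indeed $G'=\phi(G)$). So from $V_{\lambda_0}\cap(\rmim(G)\cup\rmim(G'))=\emptyset$ you only get $\phi^{-1}(q^*\tU)\subseteq p^*\tU$ over $V_{\lambda_0}$, not the claimed equality. You must also treat the other half of the symmetric difference, say $H=p^*\tU\setminus\phi^{-1}(q^*\tU)$, closed in the affine scheme $p^*\tU$; the identical argument ($\Delta^*H=\emptyset$, image compact in the constructible topology, hence disjoint from some $V_{\lambda_0}$) then yields the reverse inclusion. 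With this correction your proof goes through and matches the paper's.
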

\begin{proof}
Observe that there is a canonical isomorphism (coming from the diagonal morphism at each level) $\tS \simeq \invlim_\lambda \tS\times_{S_\lambda}\tS$. The isomorphisms $\phi_\lambda$ and the projections $p_\lambda, q_\lambda$ form inverse systems respectively and their limits are: $\invlim p_\lambda = \id_{\tS}$, $\invlim q_\lambda = \id_{\tS}$, $\invlim \phi_\lambda = \id_{\tT}$. Thus, in the limit, the $\phi_\lambda$'s do restrict to an isomorphism from $p_\lambda^*\tU$ to $q_\lambda^*\tU$. We need to show that this holds already for some $\lambda_0$. The schemes $V_\lambda = q_\lambda^* \tU$ and  $V_\lambda'=\phi_\lambda(p_\lambda^*\tU)$ are affine open subschemes of $q_\lambda^*\tT$ that become equal in the limit. The symmetric differences $\Delta_\lambda = (V_\lambda \setminus V_\lambda')\cup (V_\lambda' \setminus V_\lambda)$ are affine schemes and they form an inverse system. Indeed, if $\mu > \lambda$, then $V_\mu$ is the preimage of $V_\lambda$ via $q_\mu^*\tT \rarr q_\lambda^*\tT$ and analogously for $V_\mu'$. So $V_\mu' \setminus V_\mu$ maps to $V_\lambda' \setminus V_\lambda$ and similarly for $V_\mu \setminus V_\mu'$. Thus, $\Delta_\lambda$ form a directed inverse system of affine schemes with an empty limit. By \cite[Tag 01Z2]{StacksProject}, there exists  $\lambda_0$ such that $\Delta_{\lambda_0}$ is empty, which is precisely what we wanted to show. The last part follows from \cite[Tag 0247]{StacksProject}, because $\tU \subset \tT$ is an affine open, $\tT$ is quasi-separated and so $\tU \rarr \tT$ is quasi-affine.
\end{proof}
In what follows, let us work with the definition of an algebraic space as in the Stacks Project, i.e. \cite[Definition 025Y]{StacksProject}.

\begin{lemma}\label{desctoalgspace}
Let $S$ be a topologically noetherian scheme and let $T \in \mathrm{Loc}_S$ (notation as in \cite{BhattScholze}). Let $\tS \rarr S$ be a pro-\'etale cover satisfying the assumptions of Prop. \ref{descentandlimits}. Assume that the restriction $\tT=T_{|\tS}$ is a constant sheaf (i.e. can be written as $\sqcup_\alpha \tS$). Then $T$ is represented by an algebraic space.
\end{lemma}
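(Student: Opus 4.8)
\subsubsection*{Proof plan for Lemma~\ref{desctoalgspace}}

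The plan is to extract from Proposition~\ref{descentandlimits} an honest étale atlas of $T$, realised at finite levels of the pro-\'etale presentation, and then to invoke the standard fact that the quotient of a scheme by an \'etale equivalence relation is an algebraic space. Since the assertion is Zariski local on $S$ I may assume $S$ affine; then each $S_\lambda$ is affine \'etale over $S$, the limit $\tS=\invlim_\lambda S_\lambda$ is affine, and the constant sheaf $\tT=T|_{\tS}$ is a disjoint union $\bigsqcup_{\alpha\in A}\tS_\alpha$ of clopen affine copies of $\tS$, carrying the fpqc descent datum $\phi$ with respect to $\tS\to S$ whose descent is $T$. For each $\lambda$ write $T_\lambda$ for the descent of $(\tT,\phi_\lambda)$ along $\tS\to S_\lambda$; since $\tS\to S$ factors through $S_\lambda$ one has $T_\lambda=T\times_S S_\lambda$, hence $T_\lambda\to T$ is representable and \'etale, being the base change of the \'etale morphism $S_\lambda\to S$.

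First I would, for each $\alpha\in A$, apply Proposition~\ref{descentandlimits} to the affine open $\tU_\alpha:=\tS_\alpha\subset\tT$: this produces an index $\lambda_\alpha$, the fact that $\phi_{\lambda_\alpha}$ restricts to a descent datum on $\tU_\alpha$ with respect to $\tS\to S_{\lambda_\alpha}$, and the descended morphism of fpqc sheaves $U_\alpha\to T_{\lambda_\alpha}$ with $U_\alpha$ a scheme. The next step is to observe that $U_\alpha\to T_{\lambda_\alpha}$ is a representable open immersion: base changing it along the fpqc cover $\tT=T_{\lambda_\alpha}\times_{S_{\lambda_\alpha}}\tS\to T_{\lambda_\alpha}$ recovers the clopen immersion $\tU_\alpha\hookrightarrow\tT$, and since open immersions satisfy fpqc descent, both as a property and effectively (\cite[Tag 02YM]{StacksProject}), the morphism $U_\alpha\to T_{\lambda_\alpha}$ is representable by open immersions, in particular representable and \'etale. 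Composing with $T_{\lambda_\alpha}\to T$, the morphism $U_\alpha\to T$ is representable and \'etale, and therefore so is $U:=\bigsqcup_{\alpha\in A}U_\alpha\to T$ (a disjoint union of schemes is a scheme, and for any scheme $V\to T$ the fibre product $V\times_T U=\bigsqcup_\alpha V\times_T U_\alpha$ is a disjoint union of schemes \'etale over $V$).

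Next I would verify that $U\to T$ is surjective. The morphism $\tS\to S_{\lambda_\alpha}$ is faithfully flat, being a cofiltered limit of the faithfully flat \'etale transition maps $S_\mu\to S_{\lambda_\alpha}$ for $\mu\ge\lambda_\alpha$, so $\tU_\alpha=U_\alpha\times_{S_{\lambda_\alpha}}\tS\to U_\alpha$ is surjective; moreover, by compatibility of the descents with the projections to $T$, the composite $\tU_\alpha\to U_\alpha\to T$ is the restriction to $\tU_\alpha$ of the canonical fpqc cover $\pi\colon\tT=T\times_S\tS\to T$. Hence $\pi$ factors as $\tT\to U\to T$; since $\pi$ is an epimorphism of fpqc (hence \'etale) sheaves, so is $U\to T$, and as $U\to T$ is representable this means it is surjective. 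Finally, $T$ is an fpqc sheaf on $(\mathrm{Sch}/S)$, in particular an \'etale sheaf, and $U\to T$ is a representable, surjective, \'etale morphism from a scheme; thus $R:=U\times_T U$ is a scheme, the two projections $R\rightrightarrows U$ are \'etale, $R\to U\times_S U$ is a monomorphism presenting an \'etale equivalence relation, and $T=U/R$, so $T$ is an algebraic space by \cite[Tag 02WW]{StacksProject}.

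I expect the main obstacle to be the descent bookkeeping of the middle paragraph: since the sheaves $T$ and $T_{\lambda_\alpha}$ are not yet known to be algebraic spaces, one must phrase everything in terms of base changes along the genuine scheme cover $\tT\to T_{\lambda_\alpha}$ and rely only on fpqc descent for the property "open immersion" of morphisms, which is what simultaneously delivers the representability of $U_\alpha\to T_{\lambda_\alpha}$ and its \'etaleness. Once $U\to T$ is known to be representable, surjective and \'etale, the passage to the quotient $U/R$ is purely formal.
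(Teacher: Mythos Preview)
Your proof is correct and follows essentially the same strategy as the paper's: both use Proposition~\ref{descentandlimits} to descend affine opens of $\tT$ to schemes $U_\alpha$ at a finite level $S_{\lambda_\alpha}$, and then assemble these into an \'etale atlas of $T$. The differences are cosmetic: the paper covers $\tT$ by arbitrary affine opens and verifies the two axioms for an algebraic space directly (representability of the diagonal via \cite[Tag 0247]{StacksProject}, plus the existence of an \'etale surjection from a scheme), whereas you exploit the specific clopen decomposition $\tT=\sqcup_\alpha \tS_\alpha$, argue via fpqc descent of \emph{open immersions} rather than quasi-affine morphisms, and then invoke \cite[Tag 02WW]{StacksProject} on the resulting \'etale equivalence relation $R=U\times_T U\rightrightarrows U$, which packages the diagonal check into the citation. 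Both routes are equally short; yours is perhaps slightly more concrete because of the explicit choice of cover.
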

\begin{proof}
$T$ is an fpqc sheaf on $S$ and in turn an fppf sheaf on $S$. We need to check that: a) the diagonal morphism $T \rarr T \times T$ is representable, b) there is an \'etale surjective morphism $U \rarr T$ with $U$ a scheme.
Let us start with showing b): let us cover $\tT$ with open affine subschemes $\tU_i$. As $\tT$ comes from $S$, it is equipped with a descent datum  with respect to $\tS \rarr S$. By Prop. \ref{descentandlimits}, for each $U_i$ there exist $\lambda_i$ such that $\tU_i \rarr \tT$ descends to a morphism $U_i \rarr T_{\lambda_i}$ with $U_i$ a scheme. Let $f_i$ be the composition of $U_i \rarr T_\lambda$ and $T_\lambda \rarr T$. We claim that this is an \'etale morphism of fpqc sheaves. Indeed, each morphism of the composition is representable, because each of them becomes a quasi-affine morphism of schemes after an fpqc base-change and thus we can apply \cite[Tag 0247]{StacksProject}. Similarly, both $U_i \rarr T_\lambda$ and $T_\lambda \rarr T$ are \'etale, because each of them becomes \'etale morphism of schemes after a suitable fpqc base-change. To show a) we again observe that $\Delta: T \rarr T \times T$ becomes $\tT \rarr \tT \times_{\tS} \tT$ after an fpqc base-change and thus a quasi-affine morphism of schemes (using that being quasi-affine can be checked on a chosen affine cover of the target we translate the problem to checking that the intersection of two affine opens in $\tT$ is quasi-compact, but this follows from quasi-separatedness of $\tT$ over $\tS$) and we apply \cite[Tag 0247]{StacksProject} to see that $\Delta$ is representable.
\end{proof}

\end{document}